\documentclass[twoside,11pt]{article}
\pdfoutput=1
  
\usepackage[preprint]{jmlr2e}

\usepackage{lastpage}

\jmlrheading{23}{2022}{1-\pageref{LastPage}}{10/22; Revised ....}{...}{21-0000}{Alacaoglu, B\"ohm and Malitsky}

\ShortHeadings{Beyond the golden ratio}{Alacaoglu, B\"ohm and Malitsky}
\firstpageno{1}

\usepackage[utf8]{inputenc} 
\usepackage[T1]{fontenc}    
\usepackage{hyperref}       
\usepackage{url}            
\usepackage{booktabs}       
\usepackage{amsfonts}       
\usepackage{nicefrac}       
\usepackage{microtype}      
\usepackage{xcolor}         

\usepackage{wrapfig}

\usepackage{graphicx}
\usepackage{caption,subcaption}
\newcommand{\R}{\mathbb{R}}
\newcommand{\incr}{\gamma}
\usepackage{wrapfig}
\newcommand{\gb}{\mathbf{g}}

\renewcommand{\L}{\Ecal}

\newcommand{\n}[1]{\|#1 \|}

\newcommand{\lr}[1]{\langle #1\rangle}
\newcommand{\Ecal}{\mathcal{E}}

\newcommand{\zb}{\mathbf{z}}
\newcommand{\zerg}{\mathbf{Z}}
\newcommand{\bzb}{\bar{\mathbf{z}}}
\newcommand{\xb}{\mathbf{x}}
\newcommand{\yb}{\mathbf{y}}
\newcommand{\ub}{\mathbf{u}}
\newcommand{\Gb}{\mathbf{G}}
\newcommand{\Mb}{\mathbf{M}}

\usepackage{mathtools}
\usepackage{hyperref}
\hypersetup{
    colorlinks,
    linkcolor={red},
    citecolor={blue},
    urlcolor={blue}
}

\usepackage{natbib}

\DeclareMathOperator{\Gap}{Gap}

\DeclareMathOperator{\tr}{tr}
\DeclareMathOperator{\clconv}{\overline{conv}}

\newtheorem{assumption}{Assumption}

\usepackage[colorinlistoftodos,prependcaption,backgroundcolor=black!5!white,bordercolor=red]{todonotes}

\usepackage{bm}

\usepackage{thmtools}
\usepackage{thm-restate}

\usepackage{algorithm}
\usepackage[noend]{algpseudocode}

\begin{document}

 \title{Beyond the Golden Ratio for Variational Inequality Algorithms}

\author{\name Ahmet Alacaoglu$^1$ \email alacaoglu@wisc.edu \\
       \addr Wisconsin Institute for Discovery\\
       University of Wisconsin--Madison\\
       Madison, WI, USA
       \AND
       \name Axel B\"ohm$^1$  \email axel.boehm@univie.ac.at \\
       \addr Faculty of Mathematics\\
       University of Vienna\\
       Vienna, Austria
       \AND
       \name Yura Malitsky$^1$ \email y.malitsky@gmail.com\\
       \addr Department of Mathematics\\
       Link\"oping University\\
       Link\"oping, Sweden
}

\maketitle
\begin{abstract}
  We improve the understanding of the \emph{golden ratio algorithm}, which solves monotone variational inequalities (VI) and convex-concave min-max problems via the distinctive feature of adapting the step sizes to the local Lipschitz constants.
  Adaptive step sizes not only eliminate the need to pick hyperparameters, but they also remove the necessity of global Lipschitz continuity and can increase from one iteration to the next. \\[0.15cm]
  We first establish the equivalence of this algorithm with popular VI methods such as reflected gradient, Popov or optimistic gradient descent-ascent in the unconstrained case with constant step sizes. We then move on to the constrained setting and introduce a new analysis that allows to use larger step sizes, to complete the bridge between the golden ratio algorithm and the existing algorithms in the literature. Doing so, we actually eliminate the link between the golden ratio {$\frac{1+\sqrt{5}}{2}$} and the algorithm.
  Moreover, we improve the adaptive version of the
  algorithm, first by removing the maximum step size hyperparameter (an artifact from the analysis) to improve the complexity bound, and second by adjusting it to nonmonotone problems with weak Minty solutions, with superior empirical performance.
\end{abstract}

\begin{keywords}
  min-max, variational inequality, adaptive step size, nonmonotone
\end{keywords}

\footnotetext[1]{Authors are ordered alphabetically.}

\section{Introduction}
With the increasing focus on min-max problems in applications, variational
inequality (VI) algorithms have gained significant attention in machine learning. These algorithms focus on the following VI problem
\begin{equation}\label{eq: vi_prob}
\text{find~} \zb^* \in C \text{~such that~} \langle F(\zb^*), \zb - \zb^*  \rangle \geq 0, \text{~for all~} \zb\in C,
\end{equation}
for a monotone and Lipschitz operator $F\colon C\to \mathbb{R}^d$ and a convex closed set $C \subseteq \mathbb{R}^d$.
The link between this template and convex-concave min-max problems such as
\begin{equation*}
\min_{\xb\in{X}}\max_{\yb\in{Y}} f(\xb, \yb)
\end{equation*}
is well-known and follows by setting
\begin{equation}\label{eq:VI-from-min-max}
  \zb = \binom{\xb}{\yb}, ~~~ F(\zb) = \binom{\nabla_{\xb} f(\xb, \yb)}{-\nabla_{\yb} f(\xb, \yb)}, ~~ C = X \times Y.
\end{equation}
Many algorithms, dating back to 1970s, exist for solving~\eqref{eq: vi_prob}, such as the extragradient method~\citep{korpelevich1976extragradient}, Popov's algorithm~\citep{popov1980modification}, forward-backward-forward~\citep{tseng2000modified}, reflected gradient~\citep{malitsky2015projected,malitsky2018forward}, optimistic gradient descent-ascent (OGDA)~\citep{daskalakis2018training}, dual extrapolation~\citep{nesterov2007dual}.
The algorithm we focus in this paper is the golden ratio algorithm (GRAAL) due to~\citep{malitsky2020golden}, which iterates for $k\geq 0$ as
\begin{equation}\tag{aGRAAL}
  \label{eq:agraal2}
  \begin{aligned}
    \bzb^k &= \frac{\phi-1}{\phi} \zb^k + \frac{1}{\phi} \bzb^{k-1} \\
     \zb^{k+1} &= P_C(\bzb^k - \alpha_k F(\zb^k)).
     \end{aligned}
\end{equation}
with parameter $\phi > 1$ to be defined and a step size sequence $\alpha_k$.
The nonadaptive version of this algorithm uses $\alpha_k = \frac{\phi}{2L}$ where $L$ is the global Lipschitz constant of $F$~\cite[Theorem 1]{malitsky2020golden}.

However, what really separates~\eqref{eq:agraal2} from all the other VI algorithms listed above is the ability to provably use nonmonotone step sizes adapting to local Lipschitzness of $F$, with the adaptive rule
\begin{equation}\label{eq: ss_def}
\alpha_k = \min\left( \incr \alpha_{k-1}, \frac{\phi^2}{4\alpha_{k-2}} \frac{\| \zb^k - \zb^{k-1} \|^2}{\| F(\zb^k) - F(\zb^{k-1})\|^2}, \bar \alpha \right),
\end{equation}
\begin{wrapfigure}{r}{0.40\textwidth}
  \begin{center}
    \includegraphics[width=\linewidth]{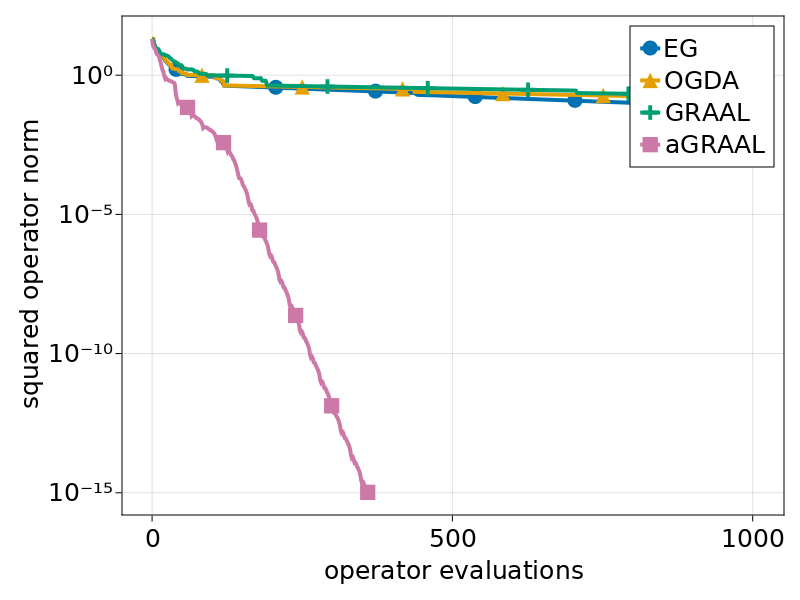}
  \end{center}
  \caption{\small{Policeman\&Burglar matrix game example from~\citep{nemirovski2013mini}.\label{fig:spotlight}}}
\end{wrapfigure}
with $\incr \le \frac{1}{\phi}+ \frac{1}{\phi^2} \in [1,2)$ and $\phi \in (1, \frac{1+\sqrt{5}}{2}]$.
Higher $\phi$ not only gives a larger maximum step size as per~\eqref{eq: ss_def}, but also makes $\bzb^k$ closer to the most recent iterate $\zb^k$ instead of $\bzb^{k-1}$.
The first argument in~\eqref{eq: ss_def} allows increasing step sizes between
iterations and the second estimates a local Lipschitz constant of $F$. The last argument $\bar\alpha$ is required for theoretical reasons in~\citep{malitsky2020golden} and generally picked as a large value in practice so that it will not be effective (see also Section~\ref{sub:adap-graal-no-hyper}).

Compared to the constant step sizes such as $\frac{1}{L}$, the adaptive step sizes make GRAAL highly competitive in practice, in addition to relaxing the assumption of global Lipschitzness of $F$.
Along with its unique empirical performance and generality, GRAAL keeps the good theoretical properties of nonadaptive algorithms: convergence of the sequence to a solution and $O(1/k)$ rate with monotonicity.
A representative plot for the benefit of using step sizes as~\eqref{eq: ss_def} is in Fig.~\ref{fig:spotlight}.

\subsection{Motivation and contributions.}
This paper starts the formal study on understanding the peculiarity of GRAAL among the large sea of VI algorithms.
Towards this goal, we contribute the following results for situating this algorithm in the literature and enhancing its theoretical understanding and practical merit:
\begin{enumerate}
\item In the unconstrained setting with a constant step size, GRAAL is equivalent to Popov's algorithm, OGDA and reflected gradient, when $\phi = 2$. This shows the convergence of GRAAL with $\phi = 2$ for free.
\item With constraints and a constant step size, the established connection is not sufficient and we introduce a novel analysis to prove the convergence of GRAAL with $\phi=2$.
\item We improve the complexity of adaptive GRAAL {from a quadratic dependence on the Lipschitz constant to a linear one,} by eliminating the hyperparameter $\bar \alpha$.
\item We show that adaptive GRAAL has convergence guarantees for nonmonotone problems with weak Minty solutions and show that it reliably converges for some hard instances, even when no other method does so.
\end{enumerate}
\subsection{Notation and preliminaries.}
We say that an operator $F$ is monotone if $\langle F(\xb) - F(\yb), \xb -\yb \rangle \geq 0$ for any $\xb, \yb$ and Lipschitz if $\|F(\xb)- F(\yb) \| \leq L \| \xb-\yb\|$. We denote the projection as $P_{C}(\zb) = \arg\min_{\ub\in C} \| \ub - \zb\|^2$. We denote the golden ratio as $\varphi = \frac{1+\sqrt{5}}{2}$. The (restricted) \emph{dual gap} function (see~\citep{facchinei2003finite,nesterov2007dual}) is a standard notion of suboptimality for VIs and is defined as
\begin{equation}\label{eq: gap_func}
\Gap(\bzb) = \max_{\zb\in S} \langle F(\zb), \bzb - \zb \rangle,
\end{equation}
where $S$ is a compact set.
It is shown in~\citep[Lemma 1]{nesterov2007dual} that this is a valid suboptimality measure, since we will prove that the iterates of the algorithm remain in a compact set.

\begin{assumption}\label{as: as1}
(i) The operator $F\colon C\to \mathbb{R}^d$ is monotone, (ii) $F$ is $L$-Lipschitz, (iii) the set $C$ is convex and closed, (iv)
the set of solutions to~\eqref{eq: vi_prob} is nonempty. 
\end{assumption}

\section{Connection of GRAAL and other VI algorithms}\label{sec: connections}

In this section, we assume that the operator $F$ is monotone and $L$-Lipschitz.
It is well-known that for such operators, resulting for example from min-max problems via~\eqref{eq:VI-from-min-max}, a naive forward evaluation
\begin{equation}\tag{FB}
  \label{eq:forward-backward}
  \zb^{k+1} = P_C(\zb^k - \alpha F(\zb^k))
\end{equation}
\renewcommand*{\theHequation}{notag.\theequation}%
will not converge even for simple bilinear problems with any fixed step size~--- a property which makes even monotone VIs arguably more difficult to solve than the computation of stationary points in nonconvex minimization.
 
Having a nonconvergent scheme~\eqref{eq:forward-backward}, it is tempting to find a simple modification that will ensure convergence. There are two principal approaches to do so: one can try to change either the argument $\zb^k$ or the forward evaluation $F(\zb^k)$ in~\eqref{eq:forward-backward}. Most algorithms opt for the latter option, whereas GRAAL opts for the former as we will see soon. 

A strikingly simple, yet extremely powerful change is to replace $F(\zb^k)$ by $F(\zb^{k+1})$, which leads to
\begin{equation}\tag{PP}
  \label{eq:pp}
  \zb^{k+1} = P_C(\zb^k - \alpha F(\zb^{k+1})),
\end{equation}
\renewcommand*{\theHequation}{notag2.\theequation}%
which is known as the proximal point algorithm~\citep{rockafellar1970convex_analysis}. While this algorithm has great theoretical properties~--- for instance, it converges for any $\alpha>0$~--- it is an implicit method. Computing $z^{k+1}$ is not a simple matter and usually each iteration of~\eqref{eq:pp} requires a call to another subsolver.

One of the earliest, and arguably most popular schemes, is the extragradient
method (EG) which dates back to~\citep{korpelevich1976extragradient}.  It relies on the change of the forward evaluation from $F(\zb^k)$ to $F(P_C(\zb^k-\alpha F(\zb^k)))$. Alternatively we can write the method as
\begin{equation}\tag{EG}
  \label{eq:eg}
    \bzb^k = P_C(\zb^k - \alpha F(\zb^k)); \qquad \zb^{k+1} = P_C(\zb^k - \alpha F(\bzb^k)).
  \end{equation}
\renewcommand*{\theHequation}{notag3.\theequation}%
Interestingly,~\citep{eg-ogda-unified,nemirovski_prox} showed that EG can be viewed as an approximation of~\eqref{eq:pp}. Naturally, however, due to the explicit structure, the method requires an upper bound on the (constant) step size: $\alpha < \frac 1 L$. The point $\bzb^{k}$ is commonly referred to as the \emph{extrapolated point}.

By using an old evaluation of the operator to compute the extrapolated point instead of a new one, we could expect the quality of the iterates not to suffer much, leading to
\begin{equation}\tag{Popov}
  \label{eq:popov}
    \bzb^k = P_C(\zb^k - \alpha F({\bzb^{k-1}})); \qquad  \zb^{k+1} = P_C(\zb^k - \alpha F(\bzb^k)).
  \end{equation}
\renewcommand*{\theHequation}{notag4.\theequation}%
This strategy, proposed by~\citep{popov1980modification}, has similar
guarantees as~\eqref{eq:eg} by requiring a single evaluation of the operator
$F$. This comes at the cost of the need to reduce the step size, leading to the requirement
$\alpha < \frac{1}{2L}$~\citep{hsieh2019convergence}.

About two decades after~\citet{popov1980modification},~\citet{tseng2000modified} proposed to modify EG to only require a single projection:
\begin{equation}\tag{FBF}
  \label{eq:fbf}
    \bzb^k = P_C(\zb^k - \alpha F(\zb^k)); \qquad  \zb^{k+1} = {\bzb^k} - \alpha (F(\bzb^k) {- F(\zb^k)}),
  \end{equation}
\renewcommand*{\theHequation}{notag5.\theequation}%
leading to the forward-backward-forward method, with the same requirement for $\alpha$ as EG:\ $\alpha < \frac 1 L$.

As observed in~\citep{tseng-minimax}, applying Popov's idea to~\eqref{eq:fbf} and
replacing $F(\zb^k)$ by $F(\bzb^{k-1})$, the obtained method can be conveniently
written in one line
\begin{equation}\tag{FoRB}
  \label{eq:forb}
\bzb^{k+1} = P_C (\bzb^k - \alpha (2 F(\bzb^k)- F(\bzb^{k-1}))),
\end{equation}
\renewcommand*{\theHequation}{notag6.\theequation}%
where the bound $\alpha < \frac{1}{2L}$ is imposed. This method was in greater generality (nonconstant step sizes) studied in~\citep{malitsky2018forward} under the name forward-reflected-backward (FoRB), but is more widely known in the unconstrained setting as the optimistic-gradient-descent-ascent (OGDA)~\citep{daskalakis2018training}. Again, \eqref{eq:forb} can be seen as \eqref{eq:forward-backward} where $F(\zb^k)$ is changed to $2F(\zb^k)-F(\zb^{k-1})$.

Another method proposed in~\citep{malitsky2015projected} is projected reflected gradient method (PRG), which solves VI by requiring only one evaluation of $F$ and iterates as
\begin{equation}\tag{PRG}
  \label{eq:prg}
\bzb^{k+1} = P_C(\bzb^k - \alpha F(2 \bzb^k - \bzb^{k-1})).
\end{equation}
\renewcommand*{\theHequation}{notag7.\theequation}%
It is easy to see that the~\eqref{eq:prg} method is equivalent to~\eqref{eq:forb} if the operator $F$ is linear.
The analysis of~\eqref{eq:prg} in~\citep{malitsky2015projected} requires $\alpha< \frac{\sqrt{2}-1}{L}$, which according to the above consideration is not tight when $F$ is linear. Regarding the \eqref{eq:forward-backward} interpretation, we only need to change $F(\zb^k)$ to $F(2\zb^k-\zb^{k-1})$.

A slightly lesser known method, relying on a very similar correction step as the one used in~\eqref{eq:forb}, but applied after the projection, was proposed in~\citep{robert-DR} and is given by
\begin{equation}\tag{shadow-DR}
  \label{eq:shadow-dr}
  \bzb^{k+1} = P_C (\bzb^k - \alpha F(\bzb^k)) - \alpha( F(\bzb^k)- F(\bzb^{k-1})).
\end{equation}
\renewcommand*{\theHequation}{notag8.\theequation}%
In the unconstrained case this method is equivalent to~\eqref{eq:forb}. The analysis in~\citep{robert-DR}, however, requires the more restrictive $\alpha<\frac{1}{3L}$ and even provides a counterexample to show that this dependence is tight. It is not clear where this more conservative dependence on the Lipschitz constant comes from.

Last but not least, our main method of interest is the golden ratio algorithm (GRAAL), introduced in~\citep{malitsky2020golden}, given by
\begin{equation}\tag{GRAAL}
  \label{eq:graal}
    \bzb^k = \frac{\phi-1}{\phi} \zb^k + \frac{1}{\phi} \bzb^{k-1}; \qquad  \zb^{k+1} = P_C(\bzb^k - \alpha F(\zb^k)).
  \end{equation}
  \renewcommand*{\theHequation}{notag9.\theequation}%
Initially, the (nonadaptive) analysis in~\citep{malitsky2020golden} requires choosing $\phi\le \varphi = \frac{1+\sqrt{5}}{2}$ (hence the name with golden ratio), together with a bound on the step size $\alpha \le \frac{\phi}{2L}$. In the following we will show that this bound can be relaxed, in which case we can recover some of the other methods mentioned in this section. Evidently, \eqref{eq:graal} can be seen as a modification of \eqref{eq:forward-backward} with the change $\zb^k$ to $\bzb^k$.

\subsection{GRAAL is averaged PRG}%
\label{sub:}

From the first line of~\eqref{eq:graal} we can rewrite
\begin{equation}
  \label{eq:graal-extrapolated}
  \zb^k = \frac{\phi}{\phi-1} \bzb^k - \frac{1}{\phi-1} \bzb^{k-1} = \bzb^k +  \frac{1}{\phi-1} \left( \bzb^k - \bzb^{k-1} \right).
\end{equation}
Hence instead of viewing $\bzb^k$ as a sequence of averaged iterates we can interpret $\zb^k$ as a sequence of extrapolated iterates.
Plugging~\eqref{eq:graal-extrapolated} into the second line of~\eqref{eq:graal} yields the identity claimed in the title, which gives for $\phi=2$ a $\frac{1}{2}$-averaged version of~\eqref{eq:prg}:
\begin{equation}
  \label{eq:graal-is-avgd-prg}
  \bzb^{k+1}  = \frac{1}{2} \bzb^k + \frac{1}{2} P_C \left(\bzb^k - \alpha F\big(2 \bzb^k - \bzb^{k-1}\big)\right),
\end{equation}
If the problem is unconstrained, GRAAL's $\bzb^k$ sequence corresponds to the one generated by~\eqref{eq:prg}, but with scaled step size.

\subsection{GRAAL is FoRB/OGDA/Popov in the unconstrained case}%
\label{sub:}

While~\eqref{eq:graal} seems fundamentally different from methods such as~\eqref{eq:forb}/OGDA/\eqref{eq:popov} or~\eqref{eq:shadow-dr}, which rely on previous evaluations of the operator $F$, they do turn out to be equivalent in the unconstrained setting, where $\bzb^{k-1} = \zb^k + \alpha F(\zb^{k-1})$, via the simple identity:
\begin{align*}
  \zb^{k+1} &= \bzb^k - \alpha F(\zb^k) \nonumber=\frac{\phi-1}{\phi}\zb^k + \frac{1}{\phi}\bzb^{k-1} - \alpha F(\zb^k)\nonumber
         = \zb^k - \frac{\alpha}{\phi} \Bigl( \phi F(\zb^k) - F(\zb^{k-1}) \Bigr).
\end{align*}
To be precise, the equivalence to OGDA and others holds with $\phi=2$ and for general $\phi$,~\eqref{eq:graal} is equivalent to the generalized version of OGDA, see~\citep{optimism_and_anchoring,axel-weak-minty-ogda}, where $2$ is replaced by $\phi$ and an appropriate scaling of the step size.

\subsection{Summary}\label{sec: wew3}%

In the unconstrained setting, all the methods above collapse to two classes. On one hand, there is~\eqref{eq:eg}/\eqref{eq:fbf}, relying on two gradient evaluations per iteration and a step size constrained by $\frac{1}{L}$. 
On the other, there is the zoo of Popovesque methods:~\eqref{eq:graal},
\eqref{eq:shadow-dr}, \eqref{eq:prg}, \eqref{eq:forb}, \eqref{eq:popov}, and OGDA requiring only one call to $F$ but paying the price of a smaller step size.

The established equivalences give us a proof of convergence for GRAAL with $\phi=2$ in the \emph{unconstrained} case since it reduces to known methods.
However, these relationships are not useful to show the convergence of GRAAL with $\phi=2$ in the \emph{constrained} case which is much more common with min-max problems.
It turns out that standard techniques are not sufficient for such a result, which motivates the next section, dedicated to proving this conclusion. 

\section{GRAAL with $\phi = 2$ for constrained problems}\label{sec: 2raal}

\subsection{Dissection of GRAAL's analysis}\label{sec: dissection}
We sketch the existing analysis of GRAAL in~\citep{malitsky2020golden} and point out to the reason for the restrictive upper bound on $\phi$. Then, we see high level ideas on how to tighten this analysis en route to $\phi=2$.
For convenience, let us define
\begin{equation}\label{eq: we3}
G(\zb^k, \zb) = 2\alpha\langle F(\zb), \zb^k - \zb \rangle ~~~\text{and}~~~
  \Ecal(\zb^{k+1}, \zb) =\frac{\phi}{\phi-1}\Vert \bzb^{k+1}-\zb \Vert^2 + \frac{\phi}{2} \Vert \zb^{k+1}-\zb^k \Vert^2.
\end{equation}
The former is important for showing the rate on the gap function since taking the maximum gives~\eqref{eq: gap_func} for which we will show the $O(1/k)$ rate.
The latter will serve as a Lyapunov (or energy) function.

The analysis of~\citep{malitsky2020golden}, given in Lemma~\ref{lem: one_it} in the Appendix for convenience, results in the following key inequality for $k\geq 1$:
\begin{equation}\label{eq: fe2}
G(\zb^k, \zb) + \Ecal(\zb^{k+1}, \zb) \leq \Ecal(\zb^{k}, \zb)+
 \left(\phi - 1 - \frac{1}{\phi}\right) \| \zb^{k+1} - \bzb^k\|^2
    - \frac{1}{\phi} \| \zb^k - \bzb^{k-1}\|^2.
\end{equation}
Golden ratio appears to make $\phi - 1 - \frac{1}{\phi} = 0$. The analysis in~\citep{malitsky2020golden} discards the \emph{good term} $- \phi \| \zb^k - \bzb^k\|^2$ in the right-hand side to use a telescoping argument, common to methods described in Section~\ref{sec: connections}.

We see that this good term is one index away from the main error term $\| \zb^{k+1} - \bzb^k\|^2$.
However, it is not one index forward, but instead backward, hence it is not immediate how to use it to relax the requirement of $\phi$.
This intuition can be formalized by summing the inequality and using the definition of $\mathcal{E}(\zb^{k+1}, \zb)$ which results in
\begin{align*}
\sum_{i=1}^k G(\zb^i, \zb) &\leq \mathcal{E}(\zb^1, \zb) -\frac{\phi}{2} \| \zb^{k+1} - \zb^k\|^2 + \sum_{i=1}^k \left[\left( \phi - 1 - \frac{1}{\phi} \right) \| \zb^{i+1} - \bzb^i\|^2 - \frac{1}{\phi} \| \zb^i - \bzb^{i-1} \|^2 \right].
\end{align*}
We focus here on the main error term

\begin{equation}\label{eq: fe3}
    -\frac{\phi}{2} \| \zb^{k+1} - \zb^k\|^2 + \sum_{i=1}^k \left[\left( \phi - 1 - \frac{1}{\phi} \right) \| \zb^{i+1} - \bzb^i\|^2 - \frac{1}{\phi} \| \zb^i - \bzb^{i-1} \|^2 \right].
\end{equation}
If this unwieldy expression is bounded, the convergence rate of the gap follows immediately.
It is easy to see that if $\phi = 2$, then~\eqref{eq: fe3} reduces to
\begin{equation}\label{eq: fe4}
- \| \zb^{k+1} - \zb^k\|^2 + \frac{1}{2} \| \zb^{k+1} - \bzb^k\|^2 - \frac{1}{2} \| \zb^1 - \bzb^0\|^2,
\end{equation}
which is not necessarily bounded due to the second term.
In fact, a naive approach can be used for values slightly larger than the golden ratio.
We know by Young's inequality that for any $\tau>0 $
\begin{equation*}
-\| \zb^{k+1} - \bzb^k\|^2 \geq -(1+\tau) \| \zb^{k+1} - \zb^k \|^2 - (1+1/\tau) \| \bzb^k - \zb^k\|^2.
\end{equation*}
It is tedious but straightforward to properly adjust $\phi$ and $\tau$ so that the error term involving $\| \zb^{k+1} - \bzb^k\|^2$ is cancelled by using the negative terms in~\eqref{eq: fe4}. However, this approach is definitely not tight due to spurious use of Young's inequality and only gives $\phi$ values up to $1.77$, whereas we expect $\phi=2$ to be tight, due to the connection with existing methods such as OGDA.

Another obvious remedy  would be to simply \emph{assume} that the term $\| \zb^{k+1} - \bzb^k\|^2$ is bounded, for example, by assuming the sequence $(\zb^k)$ is bounded. 
However, this is not realistic since boundedness of $C$ is a strong assumption, not holding for many min-max problems in practice.
A prevalent example is constrained optimization where neither the primal nor the dual domain is bounded.

In the next section, we \emph{prove} that the sequence $(\zb^k)$ is bounded with $\phi = 2$, which will help us get convergence and rate results.
Instead of the naive approach described above which tries to cancel the error in~\eqref{eq: fe4} by other terms and \emph{loose} inequalities such as Young's, we will analyze the boundedness of the sequence by a novel induction argument on the \emph{tight} inequality in~\eqref{eq: fe3}.

\subsection{GRAAL beyond the golden ratio}
A standard way to prove boundedness of iterates is to identify a Lyapunov function (nonnegative and nonincreasing) including terms such as $\|\zb^k - \zb\|^2$.
An example is $\Ecal$ in~\eqref{eq: we3}.
While this can be done with $\phi \leq \varphi$, it is unclear if it is possible with $\phi = 2$, due to the issue described in Section~\ref{sec: dissection}.
To go around this difficulty, we have to use nonstandard tools and forgo the Lyapunov function argument and analyze boundedness of $(\zb^k)$ directly via induction on the key inequality~\eqref{eq: fe2}.

\begin{theorem}\label{thm:bounded}
Let Assumption~\ref{as: as1} hold and let $\phi = 2$, $\alpha \leq  \frac{1}{L}$ in~\ref{eq:graal}. Then, we have that $(\zb^k)$ and $(\bzb^k)$ are bounded sequences. In particular, we have
\begin{equation*}
\| \bzb^k - \zb^* \|^2 \leq 4\left(\| \zb^1 - \zb^* \|^2 + \| \zb^0 - \zb^* \|^2\right) \leq 12 \| \zb^0 - \zb^*\|^2.
\end{equation*}
\end{theorem}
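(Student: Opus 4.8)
The plan is to use the one-iteration estimate of~\citep{malitsky2020golden} (Lemma~\ref{lem: one_it}) with $\phi=2$ and $\zb=\zb^*$ a solution of~\eqref{eq: vi_prob}, but --- crucially --- to \emph{keep} the good term $-\phi\|\zb^k-\bzb^k\|^2$ that the standard telescoping argument discards. Since each iterate lies in $C$, the quantity $G(\zb^k,\zb^*)=2\alpha\langle F(\zb^*),\zb^k-\zb^*\rangle$ is nonnegative by~\eqref{eq: vi_prob}, and with $\phi=2$ (so $\phi-1-\tfrac1\phi=\tfrac1\phi=\tfrac12$) the estimate becomes, for $k\ge1$,
\[
G(\zb^k,\zb^*)+\Ecal(\zb^{k+1},\zb^*)+\tfrac12\|\zb^k-\bzb^{k-1}\|^2+2\|\zb^k-\bzb^k\|^2\;\le\;\Ecal(\zb^k,\zb^*)+\tfrac12\|\zb^{k+1}-\bzb^k\|^2 .
\]
I would then change variables using the averaging rule: set $\ub_k:=\bzb^k-\bzb^{k-1}$, with $\ub_0:=0$ consistent with the standard initialization $\bzb^0=\zb^0$. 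For $\phi=2$ one checks the elementary identities $\zb^k-\bzb^k=\ub_k$, $\zb^{k+1}-\bzb^k=2\ub_{k+1}$, $\zb^k-\bzb^{k-1}=2\ub_k$ and $\zb^{k+1}-\zb^k=2\ub_{k+1}-\ub_k$, so that every squared norm above (including those hidden in $\Ecal$) turns into a quadratic in consecutive $\ub$'s. Substituting these and using $\|2v-w\|^2=2\|v-w\|^2+2\|v\|^2-\|w\|^2$ at indices $k$ and $k+1$, the $\|\ub_{k+1}\|^2$ contributions cancel exactly and one is left with the clean recursion
\[
G(\zb^k,\zb^*)+S_{k+1}+\|\ub_k\|^2+\|\ub_{k-1}\|^2\;\le\;S_k,\qquad S_k:=2\|\bzb^k-\zb^*\|^2+2\|\ub_k-\ub_{k-1}\|^2 .
\]

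Hence $(S_k)$ is nonincreasing, which immediately yields $\|\bzb^k-\zb^*\|^2\le\tfrac12 S_k\le\tfrac12 S_1$ and $\|\ub_k\|^2\le S_k\le S_1$ for all $k\ge1$; boundedness of $(\zb^k)$ then follows from $\zb^k=\bzb^k+\ub_k$. It remains to unwind the constant: $S_1=2\|\bzb^1-\zb^*\|^2+2\|\ub_1\|^2$, and since $\bzb^1=\tfrac12(\zb^0+\zb^1)$ we have $\|\bzb^1-\zb^*\|^2\le\tfrac12\|\zb^1-\zb^*\|^2+\tfrac12\|\zb^0-\zb^*\|^2$ by convexity and $\|\ub_1\|^2=\tfrac14\|\zb^1-\zb^0\|^2\le\tfrac12\|\zb^1-\zb^*\|^2+\tfrac12\|\zb^0-\zb^*\|^2$, so $\|\bzb^k-\zb^*\|^2\le\|\zb^1-\zb^*\|^2+\|\zb^0-\zb^*\|^2\le4\bigl(\|\zb^1-\zb^*\|^2+\|\zb^0-\zb^*\|^2\bigr)$. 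The sharper $12\|\zb^0-\zb^*\|^2$ form uses in addition $\|\zb^1-\zb^*\|^2\le2\|\zb^0-\zb^*\|^2$, which comes from $\zb^1=P_C(\zb^0-\alpha F(\zb^0))$, $\zb^*=P_C(\zb^*-\alpha F(\zb^*))$, nonexpansiveness of $P_C$, monotonicity of $F$, and $\alpha L\le1$.

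The difficulty, and the reason the telescoping sketched in Section~\ref{sec: dissection} stalls, is the right-hand term $\tfrac12\|\zb^{k+1}-\bzb^k\|^2=2\|\ub_{k+1}\|^2$: it carries the wrong sign for a descent argument and, as noted there, it lags the good term $-2\|\ub_k\|^2$ by one index. Passing to the variables $\ub_k$ is what makes this term transparent, and the whole argument hinges on the exact cancellation of $\|\ub_{k+1}\|^2$ above, which consumes \emph{both} $-\tfrac1\phi\|\zb^k-\bzb^{k-1}\|^2$ \emph{and} the good term $-\phi\|\zb^k-\bzb^k\|^2$; if one instead works with~\eqref{eq: fe2} exactly as displayed (good term already removed), the coefficient bookkeeping leaves an unabsorbed $+\|\ub_k\|^2$ and one is pushed toward the genuine induction on the summed inequality~\eqref{eq: fe3} that the text indicates, carrying e.g.\ $\|\zb^j-\zb^*\|^2\le M$ as hypothesis and bounding $\|\zb^{k+1}-\bzb^k\|\le\alpha\|F(\zb^k)\|$ via $\alpha L\le1$. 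I expect the main obstacle to be precisely this: organizing the cancellation/induction so that no loose Young-type inequality is needed, together with the bookkeeping at the $k=1$ boundary where $\ub_0$ is a convention rather than an iterate.
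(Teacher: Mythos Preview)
Your argument has a genuine gap at the very first step: the starting inequality double-counts the ``good term.'' In Lemma~\ref{lem: one_it} the term $-\phi\|\zb^k-\bzb^k\|^2$ has \emph{not} been discarded; it has merely been rewritten, via $\phi^2\|\zb^k-\bzb^k\|^2=\|\zb^k-\bzb^{k-1}\|^2$, as $-\tfrac1\phi\|\zb^k-\bzb^{k-1}\|^2$. With $\phi=2$ these are the same number: $2\|\zb^k-\bzb^k\|^2=\tfrac12\|\zb^k-\bzb^{k-1}\|^2$. Thus the correct one-iteration inequality is
\[
G(\zb^k,\zb^*)+\Ecal(\zb^{k+1},\zb^*)+\tfrac12\|\zb^k-\bzb^{k-1}\|^2\;\le\;\Ecal(\zb^k,\zb^*)+\tfrac12\|\zb^{k+1}-\bzb^k\|^2,
\]
with no additional $2\|\zb^k-\bzb^k\|^2$ on the left. (The ``discarding'' alluded to in Section~\ref{sec: dissection} refers to the telescoping step in the $\phi=\varphi$ analysis, not to the statement of Lemma~\ref{lem: one_it}.)

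With the correct inequality, your change of variables $\ub_k=\bzb^k-\bzb^{k-1}$ and the identity $\|2v-w\|^2=2\|v-w\|^2+2\|v\|^2-\|w\|^2$ are fine, but the bookkeeping now yields
\[
G(\zb^k,\zb^*)+S_{k+1}+\|\ub_{k-1}\|^2\;\le\;S_k+\|\ub_k\|^2,\qquad S_k:=2\|\bzb^k-\zb^*\|^2+2\|\ub_k-\ub_{k-1}\|^2,
\]
\emph{not} $G+S_{k+1}+\|\ub_k\|^2+\|\ub_{k-1}\|^2\le S_k$. The sign on $\|\ub_k\|^2$ is reversed, $(S_k)$ is not a Lyapunov function, and the boundedness conclusion does not follow. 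Indeed you essentially anticipate this in your last paragraph: ``if one instead works with~\eqref{eq: fe2} exactly as displayed \dots\ the coefficient bookkeeping leaves an unabsorbed $+\|\ub_k\|^2$.'' That is precisely the situation here, because~\eqref{eq: fe2} and Lemma~\ref{lem: one_it} (with $\varepsilon=0$) are the same inequality. This is exactly the obstruction the paper highlights before Theorem~\ref{thm:bounded}, and it is why the paper abandons the Lyapunov route and instead proves boundedness by a direct induction on the \emph{summed} inequality~\eqref{beauty_main}, using two parallelogram identities and a triangle-inequality estimate (and, alternatively, a small SDP certificate in Appendix~\ref{sec:appendix}).
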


While we provide a formal proof for the case $\phi=2$ here for simplicity, we also supply a computer aided proof via semi-definite programming, see Appendix~\ref{sec:appendix}, which in addition covers the case for $\phi<2$ and provides a better constant for $\phi=2$.

\begin{proof}
In~\eqref{eq: fe2}, we set $\zb=\zb^*$ and use $G(\zb^k, \zb^*)\geq 0$ by~\eqref{eq: vi_prob}.
Next, we sum~\eqref{eq: fe2} with $\phi=2$ which gives us
\begin{align}
2\n{\bzb^{k+1}-\zb^*}^2 + \n{\zb^{k+1}-\zb^k}^2
&\leq 2\n{\bzb^{1}-\zb^*}^2  + \frac12 \n{\zb^1-\zb^0}^2+  \frac 1 2\n{\zb^{k+1}-\bzb^k}^2 \notag \\
&= \| \zb^1 - \zb^*\|^2+\| \zb^0 - \zb^*\|^2  +\frac 1 2\n{\zb^{k+1}-\bzb^k}^2,\label{beauty_main}
\end{align}
where we used $\zb^0 = \bzb^0$. Define $R^2=\| \zb^1 - \zb^*\|^2 + \|\zb^0 -\zb^*\|^2$.  For the inductive step, assume that $\n{\bzb^i-\zb^*}\leq 2R$ for all $i\leq k$.

We will transform the error term $\frac{1}{2}\| \zb^{k+1} - \bzb^k\|^2$ in~\eqref{beauty_main} so that the other terms in the left-hand side of~\eqref{beauty_main} can be used for (partial) cancellation. 
By the paralellogram law and definition $\bzb^k=\frac{1}{2}\zb^k + \frac{1}{2} \bzb^{k-1}$, we have
\begin{align*}
\frac{1}{2}\| \zb^{k+1} - \bzb^k\|^2 &= \| \zb^{k+1} - \zb^k \|^2 + \| \zb^k - \bzb^k\|^2 - \frac{1}{2}\| \zb^{k+1} - 2\zb^k + \bzb^k \|^2 \\
&= \| \zb^{k+1} - \zb^k \|^2 + \|\zb^k - \bzb^k\|^2 - 2\| \bzb^{k+1} - \zb^k \|^2.
\end{align*}
Plugging in this equality to~\eqref{beauty_main} and using the definition of $\bzb^k$ gives us
\begin{align}\label{beauty2_main}
2\n{\bzb^{k+1}-\zb^*}^2 + 2\n{\bzb^{k+1}-\zb^k}^2
&\leq R^2  +  \n{\zb^k-\bzb^{k}}^2 = R^2 + \frac 14\n{\zb^k-\bzb^{k-1}}^2.
\end{align}
The left-hand side is in a suitable form to apply another paralellogram law and obtain
\begin{equation*}
2\n{\bzb^{k+1}-\zb^*}^2 +2\n{\bzb^{k+1}-\zb^k}^2 = \n{\zb^k-\zb^*}^2 + 4\left\|\bzb^{k+1} - \frac{\zb^*+\zb^k}{2}\right\|^2.
\end{equation*}
After combining this equality with \eqref{beauty2_main}, it follows that
\begin{align}\label{better_main}
  \n{\zb^k-\zb^*}^2 + 4\left\|\bzb^{k+1} - \frac{\zb^*+\zb^k}{2}\right\|^2 \leq  R^2 + \frac 1 4\n{\zb^{k}-\bzb^{k-1}}^2.
\end{align}
We are now at the most critical point of the proof.
For induction, we will combine the second term in the right-hand side of~\eqref{better_main} with the terms in the left-hand side to obtain $\| \bzb^{k-1} - \zb^*\|^2$.
Now continue with the following identity which can be verified by simple expansion
\begin{align*}
  \frac 14\n{\zb^k-\bzb^{k-1}}^2 - \n{\zb^k-\zb^*}^2 &= -\frac 34\left\|\zb^k - \frac{4}{3} \zb^* + \frac{1}{3}\bzb^{k-1} \right\|^2+\frac 13 \n{\bzb^{k-1}-\zb^*}^2.
\end{align*}
Even though it is difficult to see the significance of this identity, a high level intuition is noticing that $\zb\mapsto \frac{1}{4} \| \zb - \bzb^{k-1}\|^2 - \| \zb - \zb^*\|^2$ is maximized at $\zb = \frac{4}{3} \zb^* - \frac{1}{3}\bzb^{k-1}$.
We can then rewrite \eqref{better_main} as
\begin{align}\label{better3_main}
  3 \left\|\frac{\zb^k - \zb^*}{2}  + \frac{\bzb^{k-1}-\zb^*}{6}\right\|^2 + 4\left\|\bzb^{k+1} - \frac{\zb^*+\zb^k}{2}\right\|^2 \leq  R^2 + \frac 1 3\n{\bzb^{k-1}-\zb^*}^2.
\end{align}
Let $a = \zb^k-\zb^*$, $b = \bzb^{k-1}-\zb^*$. Then using this notation in~\eqref{better3_main}, taking square root of both sides, using triangle inequality and the inductive assumption ($\| b\| \leq 2R$) implies that
\begin{equation*}
\left\|\frac 1 2 a + \frac 1 6 b\right\|^2 \leq \frac{R^2}{3} + \frac 1 9 \n{b}^2 \quad \implies \quad \frac 12\| a\| \leq \sqrt{\frac{R^2}{3}+\frac 1 9 \|b\|^2} + \frac 1 6 \| b\|\leq \frac{\sqrt{7} + 1}{3} R.
\end{equation*}
On the other hand, \eqref{better3_main} along with inductive assumption gives us
\begin{equation*}
\left\|\bzb^{k+1}-\frac{\zb^*+\zb^k}{2}\right\|^2 \leq \frac{R^2}{4}  + \frac{1}{12}\n{\bzb^{k-1}-\zb^*}^2 \quad \implies \quad \left\|\bzb^{k+1}-\frac{\zb^*+\zb^k}{2}\right\| \leq \sqrt{\frac{7R^2}{12}}.
\end{equation*}
We can now combine the last two estimations with triangle inequality to complete the induction
\begin{align}
  \n{\bzb^{k+1}-\zb^*}&\leq \left\|\bzb^{k+1}-\frac{\zb^*+\zb^k}{2}\right\| + \left\|\frac{\zb^*+\zb^k}{2}-\zb^*\right\| = \left\|\bzb^{k+1}-\frac{\zb^*+\zb^k}{2}\right\| + \frac 1 2\n{\zb^k-\zb^*} \nonumber \\
                  &\leq \sqrt{\frac{7}{12}}\cdot R + \frac{\sqrt 7 + 1}{3}\cdot R < 2R.
\end{align}
Hence, by induction we proved that $(\bzb^k)$ is bounded. The definition $\zb^k = 2\bzb^k - \bzb^{k-1}$ shows that $(\zb^k)$ is also bounded. The final inequality uses $R^2 \leq 3 \| \zb^0 - \zb^*\|^2$, which is shown in Lemma~\ref{lem: rto4} in Appendix~\ref{sec:appendix}.
\end{proof}

\begin{corollary}\label{cor: as3}
Let Assumption~\ref{as: as1} hold, $\phi = 2$, $\alpha = \frac{1-\varepsilon}{L}$ for any $\varepsilon\in(0, 1)$ in \ref{eq:graal} and $\mathbf{Z}^k = \frac{1}{k}\sum_{i=1}^k\zb^i$. Then $(\zb^k)$ converges to a solution of~\eqref{eq: vi_prob} and
\begin{equation*}
\Gap\left( \mathbf{Z}^k \right) \leq \frac{32L}{(1-\varepsilon)k}\| \zb^0 - \zb^*\|^2.
\end{equation*}
\end{corollary}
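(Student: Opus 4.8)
The plan is to combine the boundedness result of Theorem~\ref{thm:bounded} with the summed key inequality already derived in Section~\ref{sec: dissection}, specializing to $\phi=2$ and $\alpha=\frac{1-\varepsilon}{L}$. First I would revisit inequality~\eqref{beauty_main}, which with $\zb=\zb^*$ reads
\[
2\n{\bzb^{k+1}-\zb^*}^2 + \n{\zb^{k+1}-\zb^k}^2 \leq R^2 + \tfrac12\n{\zb^{k+1}-\bzb^k}^2,
\]
but for the rate I need the version that retains the full left-hand side sum of $G$-terms, i.e.\ $\sum_{i=1}^k G(\zb^i,\zb)$ on the left. So I would instead start from~\eqref{eq: fe2} with $\phi=2$, keep the telescoped $\Ecal$ terms, and arrive at
\[
\sum_{i=1}^k G(\zb^i,\zb) \leq \Ecal(\zb^1,\zb) - \tfrac{\phi}{2}\n{\zb^{k+1}-\zb^k}^2 + \tfrac12\n{\zb^{k+1}-\bzb^k}^2 - \tfrac12\n{\zb^1-\bzb^0}^2.
\]
The main obstacle, exactly as flagged in Section~\ref{sec: dissection}, is that the leftover term $\tfrac12\n{\zb^{k+1}-\bzb^k}^2$ is not obviously bounded; this is precisely what Theorem~\ref{thm:bounded} resolves. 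Using $\zb^{k+1}-\bzb^k = \bzb^k - \alpha F(\zb^k) - \bzb^k$ — wait, more carefully, $\n{\zb^{k+1}-\bzb^k}\le \n{\zb^{k+1}-\zb^*} + \n{\zb^*-\bzb^k}$, and by Theorem~\ref{thm:bounded} both $\n{\bzb^k-\zb^*}$ and $\n{\zb^k-\zb^*}$ (hence $\n{\zb^{k+1}-\zb^*}$, which one can also bound via $\zb^{k+1}=2\bzb^{k+1}-\bzb^k$) are $O(R)$ with $R^2\le 3\n{\zb^0-\zb^*}^2$. Concretely $\n{\zb^{k+1}-\bzb^k}^2 \le c\,\n{\zb^0-\zb^*}^2$ for an explicit constant $c$, so the error term contributes only an additive $O(\n{\zb^0-\zb^*}^2)$, independent of $k$.

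Next I would pass from the $G$-sum to the gap. By convexity of $\zb\mapsto \langle F(\zb), \mathbf{Z}^k - \zb\rangle$... no — rather, by monotonicity of $F$: for fixed $\zb\in S$, $\langle F(\zb), \zb^i - \zb\rangle \le \langle F(\zb^i), \zb^i-\zb\rangle$ is the wrong direction; instead use $\langle F(\zb), \zb^i-\zb\rangle \ge \langle F(\zb),\zb^i-\zb\rangle$ trivially and linearity in $\zb^i$: $\sum_{i=1}^k \langle F(\zb),\zb^i-\zb\rangle = k\langle F(\zb), \mathbf{Z}^k - \zb\rangle$. Hence $2\alpha k\,\langle F(\zb),\mathbf{Z}^k-\zb\rangle = \sum_{i=1}^k G(\zb^i,\zb) \le \Ecal(\zb^1,\zb) + \tfrac12\n{\zb^{k+1}-\bzb^k}^2$. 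Taking the maximum over $\zb\in S$ and using that $S$ is compact (so $\Ecal(\zb^1,\zb)$ and the iterate-dependent bound are uniformly $O(\n{\zb^0-\zb^*}^2)$ — here I would also invoke boundedness of $(\zb^k)$ to control $\n{\bzb^1-\zb}$ uniformly, or simply note $S$ can be taken as a fixed compact set containing the iterates, following the remark after~\eqref{eq: gap_func}) yields $\Gap(\mathbf{Z}^k) \le \frac{C'}{\alpha k}\n{\zb^0-\zb^*}^2 = \frac{C'L}{(1-\varepsilon)k}\n{\zb^0-\zb^*}^2$. Tracking constants carefully — $\Ecal(\zb^1,\zb^*) = 2\n{\bzb^1-\zb^*}^2 + \n{\zb^1-\zb^0}^2 \le$ (something like) $2R^2 + \cdots$, plus the $\tfrac12\n{\zb^{k+1}-\bzb^k}^2 = O(R^2)$ term, plus $R^2\le 3\n{\zb^0-\zb^*}^2$ — should produce the stated constant $32$; if a loose triangle-inequality step inflates it, I would tighten the bound on $\n{\zb^{k+1}-\bzb^k}$ using the cleaner estimates $\n{\bzb^{k+1}-\tfrac{\zb^*+\zb^k}{2}}\le\sqrt{7R^2/12}$ and $\tfrac12\n{\zb^k-\zb^*}\le\tfrac{\sqrt7+1}{3}R$ from the proof of Theorem~\ref{thm:bounded}.

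Finally, for convergence of $(\zb^k)$ to a solution I would run the standard Opial-type argument. From~\eqref{eq: fe2} with $\zb=\zb^*$ and $\phi=2$, the quantity $\Ecal(\zb^k,\zb^*)$ is "almost" nonincreasing up to the good/bad $\|\cdot\|^2$ terms; more usefully, the summed inequality shows $\sum_k \n{\zb^{k+1}-\zb^k}^2 < \infty$ and $\sum_k \n{\zb^k-\bzb^{k-1}}^2 < \infty$ (the latter from the $-\tfrac1\phi\n{\zb^k-\bzb^{k-1}}^2$ terms, once the error term is known bounded), hence $\zb^{k+1}-\zb^k\to 0$ and $\bzb^k\to$ same limit points as $\zb^k$. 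Boundedness (Theorem~\ref{thm:bounded}) gives a convergent subsequence $\zb^{k_j}\to \hat\zb$; the vanishing increments plus Lipschitzness of $F$ and closedness of $C$ let me pass to the limit in the optimality condition for the projection $\zb^{k+1}=P_C(\bzb^k-\alpha F(\zb^k))$ to conclude $\hat\zb$ solves~\eqref{eq: vi_prob}. Then I would upgrade to full-sequence convergence: since $\hat\zb$ is itself a solution, replaying the argument with $\zb^*=\hat\zb$ shows the relevant energy $\Ecal(\cdot,\hat\zb)$ (or a suitable combination with the summable tails) converges, and an Opial argument rules out a second cluster point. The only delicate point in this last part is that the "energy" with $\phi=2$ is not exactly monotone, so I would phrase it as: $\Ecal(\zb^{k+1},\hat\zb) + (\text{summable tail from index }k) $ is nonincreasing, which suffices for Opial; this mirrors how~\citep{malitsky2020golden} handles the $\phi<\varphi$ case but now leans on Theorem~\ref{thm:bounded} to absorb the previously-problematic term.
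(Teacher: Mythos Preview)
Your plan matches the paper's proof in overall architecture: sum the key inequality from Lemma~\ref{lem: one_it}, absorb the leftover $\tfrac12\|\zb^{k+1}-\bzb^k\|^2$ via Theorem~\ref{thm:bounded}, take the maximum over a compact $S$ containing the iterates for the gap bound, and run an Opial argument for sequence convergence. Two points deserve correction or sharpening.

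First, your claimed route to $\sum_k\|\zb^k-\bzb^{k-1}\|^2<\infty$ ``from the $-\tfrac{1}{\phi}\|\zb^k-\bzb^{k-1}\|^2$ terms'' does not work: at $\phi=2$ the coefficients $\phi-1-\tfrac{1}{\phi}=\tfrac12$ and $\tfrac{1}{\phi}=\tfrac12$ coincide, so the pair $\tfrac12\|\zb^{i+1}-\bzb^i\|^2-\tfrac12\|\zb^i-\bzb^{i-1}\|^2$ telescopes completely and yields no accumulated negative sum. The paper instead first extracts $\sum_k\|\zb^{k+1}-\zb^k\|^2<\infty$ --- and this is precisely where $\varepsilon>0$ enters, since Lemma~\ref{lem: one_it} carries $(1-\varepsilon)^2\|\zb^k-\zb^{k-1}\|^2$ on the right versus $\|\zb^{k+1}-\zb^k\|^2$ on the left, creating a strictly positive gap. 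From that summability, the paper derives $\sum_k\|\zb^k-\bzb^k\|^2<\infty$ by a Young-inequality recursion
\[
\|\zb^k-\bzb^k\|^2 \le \tfrac{15}{4}\|\zb^k-\zb^{k-1}\|^2 + \tfrac34\|\zb^{k-1}-\bzb^{k-1}\|^2,
\]
which after summation and absorbing the $\tfrac34$-coefficient gives the claim (and hence $\|\zb^k-\bzb^{k-1}\|=2\|\zb^k-\bzb^k\|\to 0$).

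Second, your ``energy plus summable tail'' formulation for the Opial step is vaguer than necessary. The paper observes directly from the one-step inequality (Lemma~\ref{lem: one_it}, $\phi=2$, $\zb=\zb^*$) that
\[
A_k := 2\|\bzb^{k}-\zb^*\|^2 + \|\zb^{k}-\zb^{k-1}\|^2 - \tfrac12\|\zb^{k}-\bzb^{k-1}\|^2
\]
is nonincreasing; it is also lower bounded because $\|\zb^k-\bzb^{k-1}\|$ is bounded by Theorem~\ref{thm:bounded}. Hence $A_k$ converges, and since its last two summands tend to zero, $\lim_k\|\bzb^k-\zb^*\|^2$ exists for every solution $\zb^*$. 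Combined with the subsequential limit being a solution, this gives full convergence without having to package anything as a ``summable tail''.
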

\begin{proof}
We begin by proving the first part of the corollary.
By the boundedness of the iterates, proved in Theorem~\ref{thm:bounded}, we have that $(\zb^k)$ has a convergent subsequence, say $(\zb^{k_i})$ and for some $\tilde \zb \in C$, we have that $\zb^{k_i} \to \tilde \zb$. We next show that $\tilde \zb$ is a solution of~\eqref{eq: vi_prob}. For this, first notice that by summing the result of Lemma~\ref{lem: one_it} with $\zb=\zb^*$, $\phi=2$ and using the boundedness of $\|\bzb^{k+1} - \zb^k\|^2$ derived in Theorem~\ref{thm:bounded}, we have that
\begin{equation*}
\sum_{k=0}^\infty \|\zb^k - \zb^{k-1} \|^2 < +\infty.
\end{equation*}
Young's inequality gives
\begin{align*}
\| \zb^k - \bzb^k \|^2 &\leq 3 \| \zb^{k} - \zb^{k-1} \|^2 + \frac{3}{2}\|  \bzb^k - \zb^{k-1} \|^2 \\
&\leq \frac{15}{4} \| \zb^{k} - \zb^{k-1} \|^2 + \frac{3}{4}\|  \bzb^{k-1} - \zb^{k-1} \|^2,
\end{align*}
where the second inequality is by the definition $\bzb^k =\frac{1}{2}\zb^k + \frac{1}{2}\bzb^{k-1}$. Rearranging and summing this inequality gives
\begin{equation*}
\sum_{k=0}^\infty \| \zb^k - \bzb^k \|^2 \leq \sum_{k=0}^\infty 15\| \zb^k - \zb^{k-1} \|^2 < +\infty.
\end{equation*}
where we used $\zb^{-1} = \bzb^{-1}$ for getting the first inequality.

As a result, we have that $\zb^{k+1} - \zb^k \to 0$ and $\bzb^k - \zb^k \to 0$ and hence $\bzb^{k_i} \to \tilde \zb$ and $\zb^{k_i+1}\to\tilde \zb$.
We can take the limit in the prox-inequality
\begin{equation*}
  \langle \zb^{k+1} - \bar{\zb}^k + \alpha F(\zb^k), \zb - \zb^{k+1} \rangle \geq 0
\end{equation*}
and use continuity of $F$ to get that $\tilde \zb$ is a solution. For simplicity, set $\tilde \zb = \zb^*$, where $\zb^*$ is an arbitrary solution.

The result of Lemma~\ref{lem: one_it} also gives the following inequality after using $\phi=2$ and $\zb=\zb^*$:
\begin{equation*}
2 \| \bzb^{k+1} - \zb^*\|^2 + \| \zb^{k+1} - \zb^k \|^2 - \frac{1}{2} \| \zb^{k+1} - \bzb^k \|^2 \leq 2 \| \bzb^{k} - \zb^*\|^2 + \| \zb^{k} - \zb^{k-1} \|^2 - \frac{1}{2} \| \zb^{k} - \bzb^{k-1} \|^2.
\end{equation*}
Since $\|\zb^{k+1} - \bzb^k \|^2$ is bounded, the sequence on the left-hand side of the inequality is lower bounded and nonincreasing, hence convergent. In summary, we have that
\begin{equation*}
\lim_{k\to \infty} 2 \| \bzb^{k+1} - \zb^*\|^2 + \| \zb^{k+1} - \zb^k \|^2 - \frac{1}{2} \| \zb^{k+1} - \bzb^k \|^2 \text{~exists.}
\end{equation*}
Since $\| \zb^{k+1} - \zb^k \|^2 \to 0$ and $\| \zb^{k+1} - \bzb^k \|^2=4\|\bzb^{k+1} - \zb^{k+1} \|^2 \to 0$, we further deduce that
\begin{equation*}
\lim_{k\to \infty} \| \bzb^{k+1} - \zb^*\|^2 \text{~exists}.
\end{equation*}
Since we previously showed that $\bzb^{k_i} - \zb^* \to 0$ for an arbitrary $\zb^*$, we conclude that $\bzb^k - \zb^* \to 0$. It is easy to see that $\zb^k - \zb^*\to 0$ as well since we have $\bzb^k - \zb^k \to 0$.

Now we turn our attention to the convergence rate.
We start by collecting some estimations from Theorem~\ref{thm:bounded}.
In particular, the main conclusion of the theorem was that
\begin{equation}\label{eq: dsd4}
\| \bzb^k - \zb^*\|^2 \leq 4R^2 = 12 \| \zb^0 - \zb^*\|^2.
\end{equation}
Moreover, from~\eqref{better_main}, we have that
\begin{align}
  \n{\zb^k-\zb^*}^2 &\leq  R^2 + \frac 1 4\n{\zb^{k}-\bzb^{k-1}}^2\notag \\
&\leq R^2 + \frac 1 2\n{\zb^{k}-\zb^*}^2 + \frac{1}{2} \| \bzb^{k-1} - \zb^*\|^2.\notag
\end{align}
which together with~\eqref{eq: dsd4} implies
\begin{equation}\label{eq: dsd5}
  \n{\zb^k-\zb^*}^2 \leq 6R^2 = 18 \| \zb^0 - \zb^*\|^2.
\end{equation}
Since the guarantee on the gap is on the average of the sequence $(\zb^k)$, we need to set $S$ such that it will contain $(\zb^k)$ to use the result of~\citep[Lemma 1]{nesterov2007dual}. Let
\begin{equation*}
S = \{ \zb \in C\colon \| \zb - \zb^*\|^2\leq 18 \| \zb^0 - \zb^*\|^2 \}.
\end{equation*}
We set $\phi=2$ on Lemma~\ref{lem: one_it}, sum this inequality, divide by $k$ and take maximum over $S$ to obtain
\begin{equation*}
\Gap(\mathbf{Z}^k) \leq \frac{1}{2\alpha}\left( \max_{\zb\in S} 2\| \bzb^1 - \zb \|^2 + \frac{1}{2}\| \zb^1 - \zb^0\|^2 + \frac{1}{2}\|\zb^{k+1} - \bzb^{k} \|^2\right),
\end{equation*}
where the $\frac{1}{2\alpha}$ factor on the right-hand side is since $G(\zb^k, \zb) = 2\alpha \langle F(\zb), \zb^k - \zb \rangle$ (see~\eqref{eq: we3}). 

By using $\bzb^1 = \frac{1}{2} \zb^1 + \frac{1}{2} \zb^0$ due to $\bzb^0 = \zb^0$, we get
\begin{equation}\label{eq: hew2}
\Gap(\mathbf{Z}^k) \leq \frac{1}{2\alpha}\left( \max_{\zb\in S} (\| \zb^1 - \zb \|^2+\| \zb^0 - \zb \|^2) + \frac{1}{2}\|\zb^{k+1} - \bzb^{k} \|^2\right).
\end{equation}
We will now use~\eqref{eq: dsd4} and~\eqref{eq: dsd5} to get
\begin{equation}\label{eq: hew3}
\frac{1}{2}\|\zb^{k+1} - \bzb^{k} \|^2 \leq \| \zb^{k+1} - \zb^*\|^2+\| \bzb^{k} - \zb^*\|^2 \leq 30 \| \zb^0 - \zb^*\|^2.
\end{equation}
Moreover, we have for any $c$ that
\begin{align}
\max_{\zb\in S} (\| \zb^1 - \zb \|^2+\| \zb^0 - \zb \|^2) &\leq \max_{\zb\in S} 2(1+c) \| \zb - \zb^*\|^2 + (1+1/c) (\| \zb^1 - \zb^*\|^2 + \|\zb^0 - \zb^*\|^2) \notag \\
&\leq (36(1+c)+ 3(1+1/c)) \| \zb^0 - \zb^*\|^2.\label{eq: hew4}
\end{align}
We apply~\eqref{eq: hew3} and~\eqref{eq: hew4} in~\eqref{eq: hew2} and pick $c$ to minimize the corresponding term, to obtain the result.
\end{proof}

Increasing $\phi$ from the golden ratio to $2$ for the fixed step size regime not
only emphasizes an interesting connection to OGDA, but also consistently
improves empirical performance for monotone problems, see
Fig.~\ref{fig:constant}.  Even though the adaptive version of GRAAL is typically
superior in practice as per the experiments of~\cite{malitsky2020golden}, we
want to point out that with constant step sizes, the need to pick $\phi \le \varphi < 2$
caused GRAAL to perform worse than methods such as OGDA.  The main merit of our
result is showing that this is not the case.

\begin{figure}[ht]
  \begin{subfigure}[c]{.3\textwidth}
    \includegraphics[width=\linewidth]{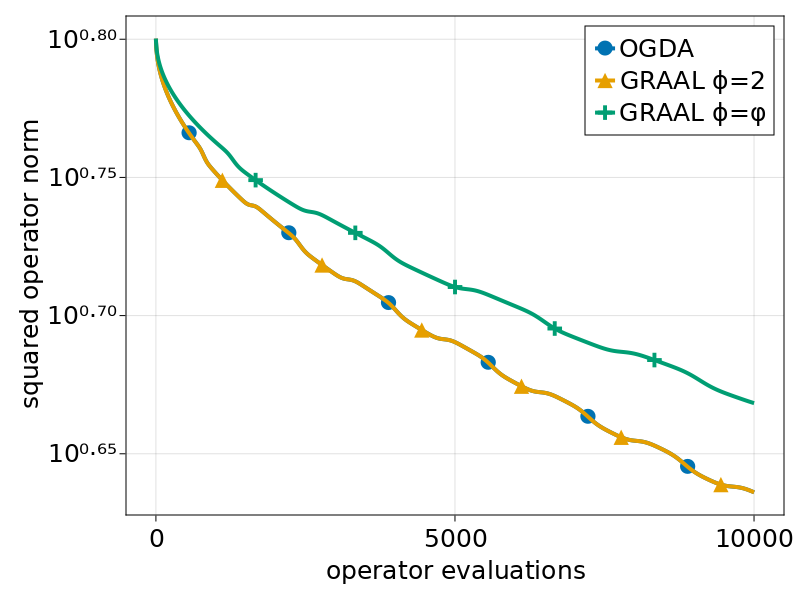}
  \end{subfigure}
  \hspace{.3cm}
  \begin{subfigure}[c]{.3\textwidth}
    \includegraphics[width=\linewidth]{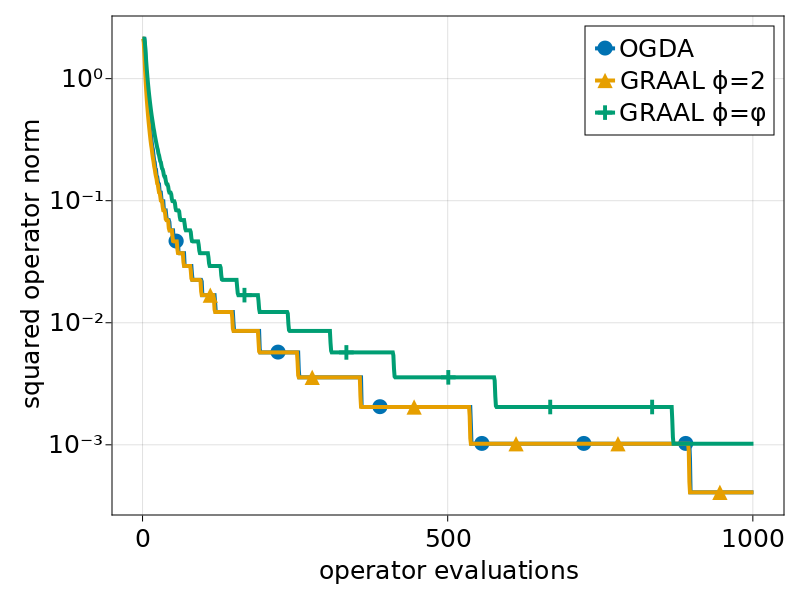}
  \end{subfigure}
  \hspace{.3cm}
  \begin{subfigure}[c]{.3\textwidth}
    \includegraphics[width=\linewidth]{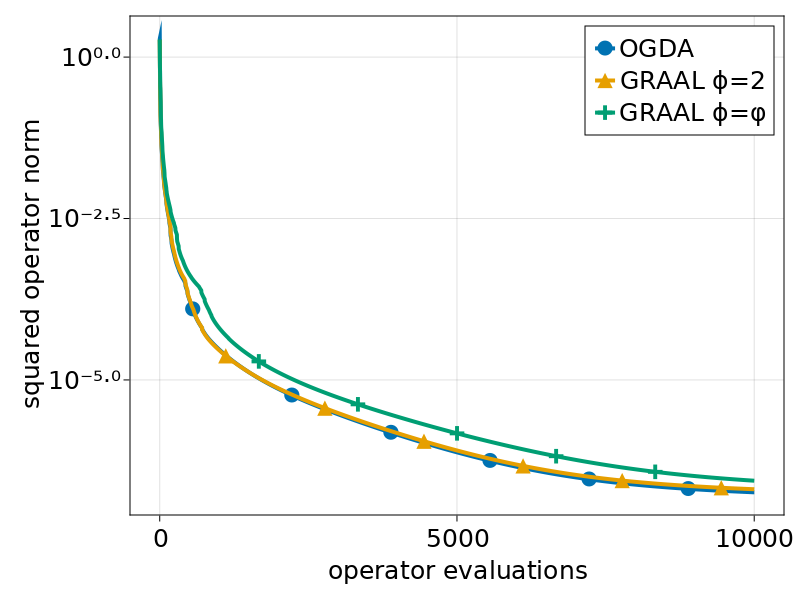}
  \end{subfigure}
  \centering
  \caption{{\small Left: The linearly constrained QP in~\citep{acc-gradient-norm-minimax}. Middle: Test matrix given in~\citep{nemirovski-stochastic-subgradient}. Right: Randomly generated matrix game. Interestingly, even with constraints, GRAAL and OGDA perform almost identically.\label{fig:constant}}}
\end{figure}

\section{Adaptive GRAAL: Removing hyperparameters and improving complexity}
\label{sub:adap-graal-no-hyper}
We first recall aGRAAL, proposed in~\citep{malitsky2020golden}:
\begin{equation}\tag{aGRAAL}
  \label{eq:agraal}
  \begin{aligned}
    \bzb^k &= \frac{\phi-1}{\phi} \zb^k + \frac{1}{\phi} \bzb^{k-1} \\
    \zb^{k+1} &= P_C(\bzb^k - \alpha_k F(\zb^k)),
  \end{aligned}
\end{equation}
\renewcommand*{\theHequation}{notag10.\theequation}%
where $\alpha_k$ is picked as in~\eqref{eq: ss_def} and $\phi \in \left(1, \frac{1+\sqrt{5}}{2} \right)$. As mentioned in~\citep{malitsky2020golden}, the hyperparameter $\bar\alpha$ in~\eqref{eq: ss_def} is only for theoretical purposes and the suggested choice in practice is taking $\bar \alpha$ very large so that it will be ineffective in~\eqref{eq: ss_def}.
However, there are two sides to this coin from a complexity point of view as we see now. Recall that~\citep{malitsky2020golden} proved that the iterates remain in a bounded region and established the following worst case rate for the ergodic iterates
\begin{equation}\label{eq: gr4}
\Gap\left(\zerg^k \right) \leq \frac{4L^2 \bar\alpha}{k\phi^2} \underbrace{\max_{\zb \in S} \left( \frac{\phi}{\phi-1} \| \zb^1 -\zb\|^2 + \frac{\theta_0}{2} \| \zb^0 - \zb\|^2 \right)}_{=: D},
\end{equation}
where $L$ denotes the (unknown) Lipschitz constant of $F$ over this bounded region.
On one hand, taking $\bar\alpha$ too large could make this rate vacuous.
On the other, taking $\bar\alpha$ small may prevent taking large step sizes in~\eqref{eq: ss_def}.
Another aspect of this bound is that the dependence on $L$ is suboptimal, since most VI methods including \emph{nonadaptive} GRAAL result in the rate $O(L/k)$.
Obtaining linear dependence on $L$ suggests taking $\bar\alpha$ as a large multiple of $\frac{1}{L}$, which not only requires the knowledge of $L$, but also could make the constant of the rate large as per the discussion above.
This suboptimal worst-case complexity result may be seen as the cost of adaptivity.
To avoid this conflict regarding the choice of $\bar\alpha$ in practice and the resulting complexity, we propose to remove $\bar\alpha$ in \eqref{eq: ss_def} and provide an analysis with the simpler step size rule
\begin{equation}\label{eq: alpha_new}
\alpha_k = \min\left( \gamma \alpha_{k-1}, \frac{\phi^2}{4 \alpha_{k-2}} \frac{\| \zb^k - \zb^{k-1}\|^2}{\| F(\zb^k) - F(\zb^{k-1})\|^2} \right),
\end{equation}
with $\gamma = \frac{1}{\phi} + \frac{1}{\phi^2}$ to complement the empirical success of the algorithm in the experiments of~\citep{malitsky2020golden} with the choice~\eqref{eq: alpha_new}.
This rule not only removes the hyperparameter $\bar\alpha$ but as we show in the
next theorem, it also results in the rate $O(L/k)$ which is only a small
constant times worse than the rate of the nonadaptive method (see
Remark~\ref{rem: rem_constants} for a precise statement). 
As a result, this is a much smaller cost to pay for the worst-case complexity
with adaptivity. With the proposed change, we obtain~Alg.~\ref{alg:agraal}.

We will start with the one iteration analysis from~\citep{malitsky2020golden} which does not need any modification, so we state the result with a very brief proof to make the connection easy to follow. Note that the spurious $\bar\alpha$ term is not used in the analysis of~\cite[Theorem 2]{malitsky2020golden} in this result.

 \begin{algorithm}[H]
   \caption{aGRAAL}\label{alg:agraal}
   \algorithmicrequire{ $\bzb^{0} = \zb^0 \in C$, $\phi \in \left(1, \frac{1+\sqrt{5}}{2} \right)$, $\gamma \in \left(1, \frac{1}{\phi} + \frac{1}{\phi^2}\right]$, $\alpha_0 >0 $, $\theta_0 = \phi$.}
   \begin{algorithmic}[1]
      \State $\zb^1  = P_C(\zb^0 - \alpha_0 F(\zb^0))$ \hfill \textcolor{gray}{possibly by linesearch}
     \For{$k\geq1$}
     \State $\alpha_k =\min\left( \gamma \alpha_{k-1}, \frac{\phi \theta_{k-1}}{4\alpha_{k-1}}\frac{\| \zb^k - \zb^{k-1}\|^2}{\|F(\zb^k) - F(\zb^{k-1})\|^2}  \right)$ \label{step: step_adap}
        \State  $\bzb^k = \frac{\phi-1}{\phi} \zb^k + \frac{1}{\phi}\bzb^{k-1}$
        \State $\zb^{k+1} = P_C(\bzb^k - \alpha_k F(\zb^k))$
        \State $\theta_{k} = \frac{\alpha_k}{\alpha_{k-1}}\phi$
     \EndFor{}
   \end{algorithmic}
 \end{algorithm}

\begin{lemma}{(Consequence of~\cite[Theorem 2]{malitsky2020golden})}\label{lem: one_it_adap}
  Let Assumption~\ref{as: as1}(i, iii, iv) hold and $F$ be locally Lipschitz. Let $\phi \in \left(1, \frac{1+\sqrt{5}}{2}\right)$, $\gamma \le \frac{1}{\phi}+\frac{1}{\phi^2}$ and $\zerg^k = \frac{1}{\sum_{i=1}^k\alpha_i}\sum_{i=1}^k \alpha_i\zb^i$. Then, the iterates $(\zb^k)$ of Alg.~\ref{alg:agraal} are bounded and we have
  \begin{equation}\label{eq: yte4}
    \Gap(\zerg^k) \leq \frac{D}{2\sum_{i=1}^k \alpha_i} \quad \forall k\ge 1.
  \end{equation}
\end{lemma}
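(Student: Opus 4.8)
The plan is to reproduce only the skeleton of the analysis of \citep[Theorem 2]{malitsky2020golden} and make visible the single spot where the step size rule is actually used, which is precisely where one sees that $\bar\alpha$ is irrelevant. The backbone is a per-iteration estimate. Starting from the optimality condition of the projection defining $\zb^{k+1}$, tested against an arbitrary $\zb\in C$, I would expand the three squared norms via $2\langle a-b,b-c\rangle = \|a-c\|^2-\|a-b\|^2-\|b-c\|^2$, use monotonicity of $F$ to replace $F(\zb^k)$ by $F(\zb)$ in the leading term (producing $2\alpha_k\langle F(\zb),\zb^k-\zb\rangle$, the quantity that will feed the gap), and invoke the optimality condition of the projection defining $\zb^k$ to rewrite the remaining inner product. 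This leaves a single operator-dependent cross term $\alpha_k\langle F(\zb^k)-F(\zb^{k-1}),\,\zb^k-\zb^{k+1}\rangle$, which I bound by Cauchy--Schwarz and Young. That bound is absorbed by the \emph{second} argument of the step size rule in line~\ref{step: step_adap}, namely $\alpha_k\|F(\zb^k)-F(\zb^{k-1})\|^2\le\frac{\phi\theta_{k-1}}{4\alpha_{k-1}}\|\zb^k-\zb^{k-1}\|^2$; the third argument $\bar\alpha$ of \eqref{eq: ss_def} is never touched, so passing to \eqref{eq: alpha_new} changes nothing. Collecting terms with the Lyapunov function $\Ecal_k$ of \citep[Theorem 2]{malitsky2020golden} (a weighted sum of a squared distance to $\zb$ and a quadratic momentum term, with weights built from the ratios $\theta_k=\phi\alpha_k/\alpha_{k-1}$, $\theta_0=\phi$), one obtains, for every $k\ge1$ and $\zb\in C$,
\[ 2\alpha_k\langle F(\zb),\zb^k-\zb\rangle + \Ecal_k(\zb^{k+1},\zb)\ \le\ \Ecal_{k-1}(\zb^k,\zb) - (\text{nonnegative residual}), \]
the residual being nonnegative exactly because $\phi<\varphi$ and $\gamma\le\frac1\phi+\frac1{\phi^2}$ (the same $\theta$-dependent coefficient check that in the constant-step case makes $\phi-1-\tfrac1\phi\le0$).

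Boundedness of the iterates then comes for free. Setting $\zb=\zb^*$ for a solution $\zb^*$ and dropping the first term by $\langle F(\zb^*),\zb^k-\zb^*\rangle\ge0$ (from \eqref{eq: vi_prob}), the displayed inequality gives $\Ecal_k(\zb^{k+1},\zb^*)\le\Ecal_{k-1}(\zb^k,\zb^*)\le\cdots\le\Ecal_0(\zb^1,\zb^*)$; since $\Ecal_k$ dominates a positive multiple of $\|\bzb^{k+1}-\zb^*\|^2$, the sequence $(\bzb^k)$ is bounded, and then $(\zb^k)$ is bounded through $\zb^k=\frac{\phi}{\phi-1}\bzb^k-\frac1{\phi-1}\bzb^{k-1}$ (cf.\ \eqref{eq:graal-extrapolated}). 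Note this step uses only the algorithmic step-size inequality, never the value of a Lipschitz constant, so local Lipschitzness of $F$ suffices.

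For the rate I would sum the per-iteration inequality over $i=1,\dots,k$: the Lyapunov terms telescope to $\Ecal_0(\zb^1,\zb)$, the residuals telescope to a nonpositive quantity, and the left-hand side is $2\big(\sum_{i=1}^k\alpha_i\big)\langle F(\zb),\zerg^k-\zb\rangle$ by the very definition of $\zerg^k$. Dividing by $2\sum_{i=1}^k\alpha_i$ and taking the maximum over the compact set $S$ — legitimate because the bounded iterates, hence $\zerg^k$, lie in $S$, so that $\Gap$ is a valid suboptimality measure by \citep[Lemma 1]{nesterov2007dual} — yields $\Gap(\zerg^k)\le\frac{1}{2\sum_{i=1}^k\alpha_i}\max_{\zb\in S}\Ecal_0(\zb^1,\zb)=\frac{D}{2\sum_{i=1}^k\alpha_i}$, which is \eqref{eq: yte4}. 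The only point that genuinely needs care is the bookkeeping of the residual coefficients, which now depend on the varying ratios $\theta_k$ through $\theta_k=\phi\alpha_k/\alpha_{k-1}$: one must verify they still telescope to a nonpositive quantity for every admissible pair $(\phi,\gamma)$, which is where $\gamma\le\frac1\phi+\frac1{\phi^2}$ enters. This is exactly the computation carried out in \citep[Theorem 2]{malitsky2020golden}, and since nothing in it involves $\bar\alpha$, I would cite it rather than reproduce it.
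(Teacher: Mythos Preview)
Your proposal is correct and follows essentially the same approach as the paper, which simply cites \cite[Theorem~2]{malitsky2020golden} and observes that $\bar\alpha$ plays no role in that argument. The only extra point the paper makes explicit, which you use implicitly but do not spell out, is that the per-iteration inequality holds already from $k=1$ (rather than only for $k\ge2$ as in the original reference) precisely because Alg.~\ref{alg:agraal} initializes $\zb^1=P_C(\zb^0-\alpha_0 F(\zb^0))$ with $\bzb^0=\zb^0$, so the projection optimality condition defining $\zb^1$ is available when you ``invoke the optimality condition of the projection defining $\zb^k$'' at $k=1$.
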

\begin{proof}
  In~\cite[Theorem 2]{malitsky2020golden} inequality (35) is only valid for $k\ge 2$. However due to our definition of $\zb^1$ and since $\zb^0 = \bzb^0$, it already holds for $k\geq 1$. Then, we can unroll the recursion another step until iteration $k=1$ instead of $k=2$.
\end{proof}

\subsection{Lower bounding the sum of step sizes}%
Looking at the bound in~\eqref{eq: yte4}, we notice that in order to derive a
complexity result, we require a lower bound for the sum of the step sizes.  In
the original proof in~\citep{malitsky2020golden} such a bound is ensured by
enforcing each $\alpha_i$ to be lower bounded by using $\bar \alpha$ to derive a $O(1/k)$ rate.

\medskip

For aGRAAL the consecutive step sizes also depend on
the initial $\alpha_0$. Since the method is entirely adaptive and we do not
assume any a priori knowledge about $F$, we want to make sure that $\alpha_0$ is
not too small and not too large. To this end, for the initialization, we
recommend to use the linesearch procedure described below.

For brevity, we will  write
$L_k = \frac{\n{\zb^k - \zb^{k-1}}}{\n{F(\zb^k) - F(\zb^{k-1})}}$. Also, let $L$
be the Lipschitz constant of $F$ over the \emph{bounded} set
$\clconv\{\zb^0, \zb^1,\dots\}$.  It trivially holds that $L\geq L_k$ for all $k$.
Without loss of generality, we assume that $L\geq 1$.

Let us choose $\alpha_0$ via linesearch as follows: set $\alpha_0$ to the largest number in $\{\gamma^{-i}\colon i=0,1,\dots\}$ (note here the use of $\gamma$ given in Alg.~\ref{alg:agraal}) such that
\begin{equation}\label{eq:lns}
\begin{aligned}
  &\zb^1 = P_C(\zb^0 - \alpha_0 F(\zb^0))\\
  &\alpha_0 \n{F(\zb^1) - F(\zb^0)}\leq \frac{\phi}{2} \n{\zb^1 - \zb^0}.
\end{aligned}
\end{equation}
The second equation gives the upper bound for $\alpha_0$: $\alpha_0\leq \frac{\phi}{2L_1}$.
Note that the linesearch always terminates because $F$ is locally Lipschitz, and we can immediately obtain the following statements, which we will use later in Lemma~\ref{lem: main_sec4}.
We have either $\alpha_0 = \gamma^{-0}=1>\frac{\phi}{2L}\ge \frac{\phi}{2 \gamma L}$ or that
$\gamma \alpha_0=\gamma \cdot \gamma^{-i}$ violates the inequality above, that is
\[\gamma \alpha_0 \n{F(\zb^1) - F(\zb^0)}> \frac{\phi}{2} \n{\zb^1 - \zb^0},\]
which also implies that $\alpha_0 \geq \frac{\phi}{2\gamma L_1} \ge \frac{\phi}{2 \gamma L}$. Moreover, from the
algorithm's update for $\alpha_1$ we have either
$\alpha_1 = \gamma \alpha_0\geq \frac{\phi}{2L}$
or
\[\alpha_1 =\frac{\phi\theta_0}{4\alpha_0 L_1^2} = \frac{\phi^2}{4\alpha_0L_1^2},\]
which implies that
$\alpha_1 + \alpha_0\geq 2\sqrt{\alpha_1\alpha_0}\geq \frac{\phi}{L_1}$.
 Using the upper bound for $\alpha_0$, we deduce that
$\alpha_1\geq\frac{\phi}{2L_1} \geq \alpha_0$.  To conclude, the suggested
choice of $\alpha_0$ in \eqref{eq:lns} ensures that
 \[\alpha_1\geq \frac{\phi}{2L} \quad \text{and}\quad  \alpha_1\geq \alpha_0.\]

The following lemma is our main technical contribution in
Section~\ref{sub:adap-graal-no-hyper}, which will lead to Theorem~\ref{th: sec4}
when combined with the previous lemma.
\begin{lemma}\label{lem: main_sec4}
Let $\alpha_0$ satisfy \eqref{eq:lns} and  $c = \min_{m\geq 2} \frac{\phi}{m}\sqrt{\frac{\gamma^{m-1}-1}{\gamma-1}}$. Then, we have
\begin{equation*}
\sum_{i=1}^k \alpha_i \geq \frac{(k-1)c}{L}.
\end{equation*}
\end{lemma}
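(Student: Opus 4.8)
The plan is to use only two structural facts about the step size rule \eqref{eq: alpha_new}: at each $k$ either the first argument of the minimum is active, so that $\alpha_k=\gamma\alpha_{k-1}$ (a \emph{growth} step), or the second is, in which case $L$-Lipschitzness of $F$ gives $\alpha_k\alpha_{k-2}\ge\frac{\phi^2}{4L^2}$ (a \emph{reset} step). Combined with the two facts already extracted from \eqref{eq:lns} right before the lemma, namely $\alpha_1\ge\frac{\phi}{2L}$ and $\alpha_1\ge\alpha_0$, this is everything about $F$ that enters. I also record that $\frac{\gamma^{m-1}-1}{\gamma-1}=1+\gamma+\dots+\gamma^{m-2}\ge m-1$, so $c\le\frac{\phi}{2}$ (taking $m=2$) and $c>0$ because the minimand tends to $\infty$ as $m\to\infty$.

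First I would list the reset indices in $\{2,\dots,k\}$ as $s_1<\dots<s_n$ and split $\{1,\dots,k\}$ into the maximal runs of growth steps they delimit; inside such a run the step sizes form a geometric progression with ratio $\gamma$. The core estimate is a per-window bound: for the window $W_j:=\{s_j-1,s_j,\dots,s_{j+1}-2\}$ of length $\ell_j:=s_{j+1}-s_j$, using that $s_j-1$ is a growth step (so $\alpha_{s_j-1}=\gamma\alpha_{s_j-2}$), that $\alpha_{s_j+t}=\gamma^t\alpha_{s_j}$ for $0\le t\le\ell_j-2$, the reset inequality $\alpha_{s_j}\alpha_{s_j-2}\ge\frac{\phi^2}{4L^2}$ and AM--GM, one gets
\begin{equation*}
\sum_{i\in W_j}\alpha_i \;=\; \gamma\alpha_{s_j-2}+\alpha_{s_j}\frac{\gamma^{\ell_j-1}-1}{\gamma-1}\;\ge\;\frac{\phi}{L}\sqrt{\frac{\gamma\,(\gamma^{\ell_j-1}-1)}{\gamma-1}}\;\ge\;\frac{\phi}{L}\sqrt{\frac{\gamma^{\ell_j-1}-1}{\gamma-1}}\;\ge\;\frac{\ell_j\,c}{L},
\end{equation*}
where the second-to-last inequality is $\gamma(\gamma^{\ell_j-1}-1)\ge\gamma^{\ell_j-1}-1$ and the last one is \emph{exactly} the definition of $c$ with $m=\ell_j$. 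The windows $W_1,\dots,W_{n-1}$ are pairwise disjoint and tile $\{s_1-1,\dots,s_n-2\}$; the initial segment $\{1,\dots,s_1-2\}$ is handled directly by $\alpha_i=\gamma^{i-1}\alpha_1\ge\gamma^{i-1}\frac{\phi}{2L}$ together with $c\le\frac{\phi}{2}$, and the final segment $\{s_n-1,\dots,k\}$ by the same AM--GM trick anchored at the reset $s_n$ (now using $\gamma(\gamma^{\ell_n}-1)\ge\gamma^{\ell_n}-1$ and $c$ at $m=\ell_n+1$). Adding the three contributions and using that the block lengths sum to $k$ gives $\sum_{i=1}^k\alpha_i\ge\frac{(k-1)c}{L}$; the cases $n=0$ and $s_1\in\{1,2\}$ are immediate from $\alpha_1\ge\frac{\phi}{2L}$ and geometric growth (the recursion for $\alpha_1$ uses $\alpha_0$ in place of $\alpha_{-1}$, so a reset at index $1$ must be read through $\alpha_1\alpha_0\ge\frac{\phi^2}{4L^2}$, or else simply through $\alpha_1\ge\frac{\phi}{2L}$).

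The hard part is blocks of length one, i.e.\ consecutive reset indices $s_{j+1}=s_j+1$: then $W_j$ collapses to the single index $\{s_j-1\}$ and, if the preceding block also has length one, $\alpha_{s_j-1}$ is itself a reset value, so the identity $\alpha_{s_j-1}=\gamma\alpha_{s_j-2}$ fails and the displayed AM--GM step cannot be applied verbatim. Such indices have to be re-grouped with a neighbouring window and bounded using the additional reset inequality $\alpha_{s_j-1}\alpha_{s_j-3}\ge\frac{\phi^2}{4L^2}$; this is where the extra room absorbed into the ``$-1$'' (and into defining $c$ as the minimum over \emph{all} $m\ge2$ rather than a single value) is actually used, and it is also the only place where the argument becomes genuinely fiddly rather than a short chain of AM--GM estimates.
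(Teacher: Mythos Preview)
Your core ingredients---AM--GM on a reset pair together with the geometric sum between resets, and the facts $\alpha_1\ge\frac{\phi}{2L}$, $\alpha_1\ge\alpha_0$, $0<c\le\frac{\phi}{2}$---are exactly the ones the paper uses, and your per-window estimate is essentially the paper's head--tail computation. Where your plan diverges is the partition scheme: you block by the \emph{reset indices} $s_1<\dots<s_n$, whereas the paper blocks by the \emph{threshold} $c/L$, defining a ``tail'' as a maximal run in which every step size after the first falls below $c/L$, and prepending the preceding element (the ``head'').

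The paper's choice buys a decisive structural fact: inside such a tail $\alpha_{s+1},\dots,\alpha_{s+m}$, a reset at any position $j\ge s+4$ would give $\alpha_{j-2}+\alpha_j\ge\frac{\phi}{L}\ge\frac{2c}{L}$ with $\alpha_{j-2}<\frac{c}{L}$, forcing $\alpha_j\ge\frac{c}{L}$ and contradicting $j\le s+m$. Hence \emph{at most two} resets occur per tail (at $s+2$ and possibly $s+3$), and each of the two resulting cases is dispatched by a single AM--GM computation of precisely the kind you wrote down. The length-one/consecutive-reset configurations that worry you simply cannot arise under this partition.

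That is the genuine gap in your plan. Your window bound requires $s_j-1$ to be a growth step and $\ell_j\ge2$; you correctly flag that runs of length-one blocks break both hypotheses, but the suggested repair (``re-group with a neighbouring window using $\alpha_{s_j-1}\alpha_{s_j-3}\ge\frac{\phi^2}{4L^2}$'') is only a direction, not an argument. Nothing in the step size rule excludes arbitrarily long stretches of consecutive resets a priori, and tiling such a stretch into pieces that each deliver at least $\frac{c}{L}$ per index is not immediate from one extra reset inequality and the single unit of slack in $(k-1)$. The cleanest fix is to switch to the threshold-based partition, after which the problematic case disappears and the remainder of your argument goes through essentially as written.
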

  As mentioned in the beginning of this section, we can no longer rely on a lower bound for every individual step size but want to use their structure to lower bound their sum directly.
  Specifically, whenever option two is active for $\alpha_i$ (meaning that the second component in~\eqref{eq: alpha_new} attains the minimum), then
  we can easily show that $\alpha_{i-2}+\alpha_i\geq \frac{\phi}{L}$. On the
  other hand, if option two is not present for a long time, then we will have a
  geometric progression $\alpha_i, \alpha_i\gamma,\alpha_i\gamma^2,\dots$, whose
sum is also easy to bound. These are two main ingredients, however
combining the two requires an intricate technical argument.

\begin{proof}
We call $\alpha_k = \gamma \alpha_{k-1}$ and $\alpha_k = \frac{\phi^2}{4\alpha_{k-2}L_{k}^2}$ as
the first and second options that the step size $\alpha_k$ can take
respectively.

\paragraph{Claim 1.} If $\alpha_k$ satisfies the second option, for $k\geq 2$, then
$\alpha_{k-2} + \alpha_k\geq \frac{\phi}{L_k}\geq \frac{\phi}{L}$. \\
This follows directly from  $a+b\geq 2\sqrt{ab}$.

\paragraph{Claim 2.} $0<c\leq\frac{\phi}{2}$.\\  The first inequality follows from $\gamma > 1$ and
$\lim_{m\to \infty}\frac \phi m \sqrt{\frac{\gamma^{m-1} - 1}{\gamma - 1}} = \infty$. The
second one follows from setting $m=2$ in the definition of
$c =\min_{m\geq 2} \frac{\phi}{m}\sqrt{\frac{\gamma^{m-1}-1}{\gamma-1}} $.

\bigskip 
We wish to show that
\begin{equation*}
\sum_{i=1}^k \alpha_i \geq \frac{c(k-1)}{L}.
\end{equation*}
If there does not exist an $\alpha_i$ smaller than $\frac{c}{L}$, then we are done. So assume that such an $\alpha_i$ exists.

For $s\geq1$, let us call by a \emph{tail} a maximal subsequence of consecutive
elements $\alpha_{s+1},\dots, \alpha_{s+m}$ such that it starts from $\alpha_{s+1}\geq \frac{c}{L}$ and the rest of the elements are all smaller:
\[\alpha_{j}< \frac{c}{L} \quad \text{for all $j=s+2,\dots, s+m$\quad  and\quad
    $\alpha_{s+m+1}\geq \frac cL$}.\]
Notice that for every such a tail, $\alpha_{s+2}<\alpha_{s+1}$, which means that the second option for
$\alpha_{s+2}$ is active. By Claim~1, this implies that $\alpha_s\geq \frac{c}{L}$.  We call this element $\alpha_s$ as a \emph{head}. Thus, for every tail
$\alpha_{s+1},\dots,\alpha_{s+m}$ there is a preceding element $\alpha_s$ that
is a head.  This was the case when $s\geq1$. If we have a tail with $s=0$, we call this sequence
$\alpha_1,\dots, \alpha_m$ the \emph{initial tail}. Note that the initial tail lacks a head (i.e.\ $\alpha_0$ is not part of~\eqref{eq: yte4}), which is the reason why we will have to consider this case separately.

As a result, we can partition $(\alpha_i)_{i=1}^k$ into a
non-overlapping sequence of \emph{(i)}~the initial tail (possibly empty),
\emph{(ii)}~head-tail pairs, and \emph{(iii)}~elements larger than
$\frac{c}{L}$.  It is sufficient to show the bound for each part in our
partition.

\paragraph{Head-tail sequence.}
For a head-tail sequence $\alpha_s,\alpha_{s+1},\dots, \alpha_{s+m}$, we will
show that
\begin{equation*}
\sum_{i=s}^{s+m} \alpha_i \geq \frac{c(m+1)}{L}.
\end{equation*}
As we have already mentioned, for  $\alpha_{s+2}$ we have that
$\alpha_{s+2}  =\frac{\phi^2}{4\alpha_{s}L_{s+2}^2}\geq \frac{\phi^2}{4\alpha_s L^2}$. 
For elements $\alpha_{s+4},\dots, \alpha_{s+m}$ only the first option can occur,
since otherwise there  will be a contradiction with Claim 1. For $\alpha_{s+3}$,
however, both options are possible:
\[
\alpha_{s+3} = \incr \alpha_{s+2} \geq \frac{\gamma \phi^2}{4\alpha_s L^2}\quad  \text{ or }\quad \alpha_{s+3}=\frac{\phi^2}{4\alpha_{s+1} L_{s+3}^2} \geq \frac{\phi^2}{4\alpha_{s+1} L^2}.
\]
If  $\alpha_{s+3}=\incr \alpha_{s+2}\geq \frac{\gamma \phi^2}{4\alpha_s L^2}$, then 
\begin{align}\label{ineq-1}
  \sum_{i=s}^{s+m}{\alpha_i} & \geq \alpha_s + \alpha_{s+1} + \frac{\phi^2}{4\alpha_s L^2} + \frac{\incr \phi^2}{4\alpha_s L^2}+\dots + \frac{\incr^{m-2}\phi^2}{4\alpha_s L^2} \notag \\
                             &= \alpha_{s+1} + \alpha_s + \frac{\phi^2}{4\alpha_s L^2}(1+\dots + \incr^{m-2})\notag \\
                             &= \alpha_{s+1} + \alpha_s+ \frac{\phi^2}{4\alpha_s L^2} \frac{\gamma^{m-1}-1}{\gamma - 1}\notag \\
                             &\geq \alpha_{s+1} + \frac{\phi}{L}\sqrt{\frac{\gamma^{m-1}-1}{\gamma - 1}}\notag\\
                             & \geq \frac{c}{L} + \frac{mc}{L} = \frac{c(m+1)}{L},
\end{align}
where the last inequality follows from $\alpha_{s+1}\geq \frac{c}{L}$ and our choice of constant $c$.

If the second option is active for  $\alpha_{s+3}$, that is $\alpha_{s+3}\geq \frac{\phi^2}{4\alpha_{s+1} L^2}$, we have a  similar estimation to~\eqref{ineq-1}:
\begin{align}\label{ineq-2}
  \sum_{i=s}^{s+m}{\alpha_i} & \geq \alpha_s + \alpha_{s+1} +\alpha_{s+2} + \frac{\phi^2}{4\alpha_{s+1} L^2} + \frac{\incr \phi^2}{4\alpha_{s+1} L^2}+\dots + \frac{\incr^{m-3}\phi^2}{4\alpha_{s+1} L^2} \notag \\
                             &= ( \alpha_s + \alpha_{s+2}) + \alpha_{s+1} + \frac{\phi^2}{4\alpha_{s+1} L^2}(1+\dots + \incr^{m-3})\notag \\
                             &=  ( \alpha_s + \alpha_{s+2}) + \alpha_{s+1}+ \frac{\phi^2}{4\alpha_{s+1} L^2} \frac{\gamma^{m-2}-1}{\gamma - 1}\notag \\
                             &\geq ( \alpha_s + \alpha_{s+2}) + \frac{\phi}{L}\sqrt{\frac{\gamma^{m-2}-1}{\gamma - 1}}\notag\\
                             & \geq \frac{2c}{L} + \frac{c(m-1)}{L} =  \frac{c(m+1)}{L},
\end{align}
where the last inequality follows from
$\alpha_s + \alpha_{s+2}\geq \frac{\phi}{L}\geq \frac{2c}{L}$ and the definition
of $c$.

Hence, in both cases the desired bound holds.

\paragraph{Initial tail.}
For an empty initial tail there is nothing to prove. For a non-empty tail $\alpha_1,\dots, \alpha_m$, we will show that
\begin{equation*}
\sum_{i=1}^m \alpha_i \geq \frac{c(m-1)}{L}.
\end{equation*}
Since $\alpha_1\geq \frac{\phi}{2L}\geq \frac{c}{L}$, the first option cannot be active for
$\alpha_2$.  Hence, $\alpha_2 = \frac{\phi^2}{4\alpha_0L_2^2}\geq \frac{\phi^2}{4\alpha_0L^2}$.
First of all, note that for $m=1$ the desired inequality obviously holds:  $\sum_{i=1}^{1}
\alpha_i\geq 0$. For $m=2$, we have by Claim~2
\[\sum_{i=1}^{2}\alpha_i\geq \alpha_0 + \alpha_2 = \alpha_0 + \frac{\phi^2}{4\alpha_0L^2}\geq \frac{\phi}{L}\geq \frac{c}{L}.\]
Thus, for the rest of the proof we suppose that $m\geq 3$.

For steps $\alpha_4,\dots, \alpha_{m}$ only the first option can be active (by
Claim~1). For $\alpha_3$ we have two cases:
\begin{enumerate}
\item The first option for $\alpha_3$ is active, that is
  $\alpha_3 = \gamma \alpha_2$. Then similarly to \eqref{ineq-1} we have
\begin{align}\label{ineq-1_init}
  \sum_{i=1}^{m}{\alpha_i} & \geq \alpha_1 + \alpha_{2} + \gamma \alpha_2 +\dots + \gamma^{m - 2}\alpha_2\notag \\
                           &\geq \alpha_1 + \frac{\phi^2}{4\alpha_0 L^2}  \frac{\gamma^{m-1}-1}{\gamma - 1}\notag \\
                           &\geq \alpha_0 + \frac{\phi^2}{4\alpha_0 L^2}  \frac{\gamma^{m-1}-1}{\gamma - 1}\notag \\
                             &\geq \frac{\phi}{L}\sqrt{\frac{\gamma^{m-1}-1}{\gamma - 1}}\notag\\
                             & \geq \frac{c m}{L}.
\end{align}

\item 
The second option for $\alpha_3$ is active, that is
  $\alpha_3 = \frac{\phi^2}{4\alpha_1L_3^2}\geq \frac{\phi^2}{4\alpha_1L^2} $. Then similarly to \eqref{ineq-2}
  we have
\begin{align}\label{ineq-2_init2}
  \sum_{i=1}^{m}{\alpha_i} & = \alpha_1 + \alpha_{2} +\alpha_{3} +\gamma \alpha_3 +\dots + \gamma^{m-3}\alpha_3 \notag \\
                            &=  \alpha_1 + \alpha_{2}+\alpha_{3}(1+\dots + \incr^{m-3})\notag \\
                             &\geq   \alpha_1 + \alpha_{2} + \frac{\phi^2}{4\alpha_{1} L^2} \frac{\gamma^{m-2}-1}{\gamma - 1}\notag \\
                             &\geq  \alpha_2  + \frac{\phi}{L}\sqrt{\frac{\gamma^{m-2}-1}{\gamma - 1}}\notag\\
                             & \geq \frac{c(m-1)}{L},
\end{align}  
where we  used that $\alpha_2\geq 0$.
\end{enumerate}

Let us summarize what we have proved:
\begin{itemize}
  
\item the sequence $\alpha_1,\dots, \alpha_k$ can be divided into an initial part
  $\alpha_1,\dots \alpha_m$, some non-overlapping head-tails, and the remaining elements;
\item for every head-tail $\alpha_s,\dots, \alpha_{s+m}$ we showed that
  $\sum_{i=0}^m\alpha_{s+i}\geq \frac{c(m+1)}{L}$;
  
\item for the initial tail we showed that
  $\sum_{i=1}^m\alpha_{i}\geq \frac{c(m-1)}{L}$;
  
\item the remaining elements in $\alpha_1,\dots, \alpha_k$ are always greater or equal than
  $\frac c L$.
\end{itemize}
Hence, for each subsequence of length $l$, the sum of its elements is at least
$\frac{c(l-1)}{L}$. This allows us to conclude that
\[\sum_{i=1}^k \alpha_i \geq \frac{c(k-1)}{L}. \]
\end{proof}

\begin{theorem}\label{th: sec4}
  Let Assumption~\ref{as: as1}(i, iii, iv) hold and $F$ be locally
  Lipschitz. Let $\phi \in \left(1, \frac{1+\sqrt{5}}{2} \right)$,
  $\gamma = \frac{1}{\phi} + \frac{1}{\phi^2}$,
  $c = \min_{m\geq 2} \frac{\phi}{m} \sqrt{\frac{\gamma^{m-1}-1}{\gamma - 1}}> 0$, and
  $\zerg^k = \frac{1}{\sum_{i=1}^k\alpha_i}\sum_{i=1}^k \alpha_i\zb^i$. Then,
  Alg.~\ref{alg:agraal} with $\alpha_0$ as in~\eqref{eq:lns} has the rate
  \begin{equation*}
    \Gap\left( \zerg^k \right) \leq  \frac{LD}{2c(k-1)} .
  \end{equation*}
\end{theorem}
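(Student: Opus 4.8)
The plan is to obtain the rate by simply chaining the two lemmas that precede the theorem, so almost all of the work is already done. First I would observe that Lemma~\ref{lem: one_it_adap} applies verbatim under the present hypotheses (Assumption~\ref{as: as1}(i,iii,iv), $F$ locally Lipschitz, $\phi\in(1,\varphi)$, and $\gamma=\frac1\phi+\frac1{\phi^2}\le\frac1\phi+\frac1{\phi^2}$, which is also $>1$ since $\phi<\varphi$). That lemma yields two facts at once: the iterates $(\zb^k)$ remain in a bounded set, and
\[
\Gap(\zerg^k)\ \le\ \frac{D}{2\sum_{i=1}^k\alpha_i}\qquad\text{for all }k\ge1.
\]
Boundedness of $(\zb^k)$ is precisely what makes $L$, the Lipschitz constant of $F$ over the bounded set $\clconv\{\zb^0,\zb^1,\dots\}$, a finite quantity (and guarantees $L\ge L_k$ for every $k$), and it simultaneously makes the constant $D$ from~\eqref{eq: gr4} finite, since $S$ is compact.

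Next I would invoke Lemma~\ref{lem: main_sec4}. Because $\alpha_0$ is selected by the linesearch \eqref{eq:lns} — which terminates, as $F$ is locally Lipschitz — that lemma gives $\sum_{i=1}^k\alpha_i\ge\frac{(k-1)c}{L}$ with $c=\min_{m\ge2}\frac{\phi}{m}\sqrt{\frac{\gamma^{m-1}-1}{\gamma-1}}$, and Claim~2 inside its proof ensures $0<c\le\frac\phi2$, so in particular $c>0$ and the denominator in what follows is nonzero. Combining the two displays, for $k\ge2$,
\[
\Gap(\zerg^k)\ \le\ \frac{D}{2\sum_{i=1}^k\alpha_i}\ \le\ \frac{D}{2(k-1)c/L}\ =\ \frac{LD}{2c(k-1)},
\]
which is exactly the claimed bound.

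As for the main obstacle: for this theorem there essentially is none, because all the difficulty has been front-loaded. The genuinely hard part lives in Lemma~\ref{lem: main_sec4} — the head/tail decomposition of the step-size sequence together with the geometric-progression estimates and the repeated use of $\alpha_{k-2}+\alpha_k\ge\frac{\phi}{L}$ — and the one-iteration analysis of~\citep{malitsky2020golden} recalled in Lemma~\ref{lem: one_it_adap}. What remains to be careful about here is only bookkeeping: checking that the hypotheses of both lemmas hold simultaneously for the choice $\gamma=\frac1\phi+\frac1{\phi^2}$, that the linesearch preconditions feeding Lemma~\ref{lem: main_sec4} are met (the estimates $\alpha_1\ge\frac{\phi}{2L}$ and $\alpha_1\ge\alpha_0$ derived just before that lemma), and that the index range is handled — Lemma~\ref{lem: main_sec4} is trivial/harmless at $k=1$, so stating the final rate for $k\ge2$ (or absorbing the $k=1$ case into the constant) is all that is needed.
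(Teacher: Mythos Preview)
Your proposal is correct and follows exactly the paper's approach: the paper's proof is a one-line combination of Lemma~\ref{lem: one_it_adap} (giving $\Gap(\zerg^k)\le D/(2\sum_{i=1}^k\alpha_i)$) with the lower bound $\sum_{i=1}^k\alpha_i\ge c(k-1)/L$ from Lemma~\ref{lem: main_sec4}. Your additional bookkeeping remarks (verifying hypotheses, finiteness of $L$ and $D$, and the $k\ge2$ caveat) are all sound and in fact more explicit than what the paper writes.
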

\begin{proof}
The result follows by using the definition of $\mathbf{Z}^k$ on the result of Lemma~\ref{lem: one_it_adap} and the lower bound of $\sum_{i=1}^k \alpha_k \geq \frac{(k-1)c}{L}$ derived in Lemma~\ref{lem: main_sec4}.
\end{proof}

The last thing left to understand is the value of $c$.
The next remark shows that it is large enough for a meaningful choice of parameters.
\begin{remark}\label{rem: rem_constants}
  Setting $\phi = 1.5$ as in the experiments of~\citep{malitsky2020golden} direct
  calculation gives $c \geq 0.5$.
\end{remark}

By proving a novel lower bound on the sum of the step sizes, we are able to prove a $\mathcal{O}(\frac1k)$ convergence rate without requiring an artificial upper bound $\bar{\alpha}$ on each individual step. Not only does this simplify the aGRAAL method, but also removes the spurious $L^2$ dependence from~\citep{malitsky2020golden}, showing that there is basically no extra cost of adaptivity.

\section{Nonmonotone problems with Weak Minty solutions}%
\label{sec:}
In Section~\ref{sec: 2raal}, for monotone problems, we observed empirically that
increasing $\phi$ leads to a certain speed-up, as in Fig.~\ref{fig:constant}. In
this section, we turn to a special class of \emph{nonmonotone} problems and show
that in some cases \emph{smaller} $\phi$ can be also favorable.

In particular, we consider the class of problems exhibiting a \emph{weak Minty} solution, which is given by a point $\zb^*$ such that
\begin{equation}\tag{WM}
  \label{eq:weak-minty}
  \langle F(\zb), \zb-\zb^* \rangle \ge -\frac{\rho}{2} \Vert F(\zb) \Vert^2 \quad \forall \zb \in \R^d.
\end{equation} 
\renewcommand*{\theHequation}{notag11.\theequation}%
This notion was recently introduced in~\citep{weak-minty-eg} in the context of von Neumann ratio games
--- a nonconvex-nonconcave min-max problem ---
and further investigated in~\citep{anchored_mgda_weak-minty,curvature-EG-weak-minty,axel-weak-minty-ogda}. While it is difficult to verify the existence of such solutions in practice, the template simultaneously captures different generalizations of monotonicity like the existence of a Minty solution~\citep{malitsky2020golden,optimistic-coherence} and negative comonotonicity~\citep{anchored-EG-neg-comonotone,negative-comonotone} in order to study nonconvex-nonconcave min-max problems. See~\citep{anchored-EG-neg-comonotone} for the implications between these different monotonicity concepts and~\citep{pp-eg-cohypomonotone} for a comparison in terms of last iterate convergence. To our knowledge, we give the first adaptive algorithm for this setting that does not require a linesearch at every iteration and only relies on \textit{local} Lipschitz continuity.

\subsection{Constant step size}
We first show the convergence of GRAAL with a constant step size.
\begin{theorem}\label{thm:graal-weak-minty}
  Let $F$ be $L$-Lipschitz and fulfill Assumption~\eqref{eq:weak-minty}, with $\rho<\frac{1}{L}$. Let $\phi>1$ be such that $\delta:= \frac{2-\phi}{\phi L} - \rho > 0$. Let $(\zb^k)$ and $(\bar{\zb}^k)$ be the iterates generated~\eqref{eq:graal} with $\alpha=\frac{2-\phi}{L}$. Then
  \begin{equation*}
     \min_{i=1,\dots, k} \Vert F(\zb^k) \Vert^2 \le \frac{L \phi }{k(2-\phi)(\phi-1)\delta } \left( \Vert \bar{\zb}^1 - \zb^* \Vert^2 + \frac{2}{\phi} \Vert \zb^1- \zb^0 \Vert^2 \right).
  \end{equation*}
\end{theorem}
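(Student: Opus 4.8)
The plan is a Lyapunov/telescoping argument built on the one-iteration estimate behind \eqref{eq: fe2}, with monotonicity replaced by the weak Minty inequality \eqref{eq:weak-minty}, and with the step size pinned to $\alpha=\tfrac{2-\phi}{L}$ so the constants close up. First I would redo the computation of Lemma~\ref{lem: one_it}: take the prox inequality for $\zb^{k+1}=P_C(\bzb^k-\alpha F(\zb^k))$ tested at $\zb^*$, decompose $\langle F(\zb^k),\zb^{k+1}-\zb^*\rangle=\langle F(\zb^k),\zb^k-\zb^*\rangle+\langle F(\zb^k)-F(\zb^{k-1}),\zb^{k+1}-\zb^k\rangle+\langle F(\zb^{k-1}),\zb^{k+1}-\zb^k\rangle$, treat the last summand with the prox inequality at step $k-1$ tested at $\zb^{k+1}$, the middle one with $L$-Lipschitzness and Young, and use the averaging identity $\zb^k-\bzb^{k-1}=\phi(\zb^k-\bzb^k)$ together with the conversion between $\|\bzb^{k+1}-\zb^*\|^2$ and $\|\zb^{k+1}-\zb^*\|^2$. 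The only change from the monotone case is in the first summand: instead of $0\le\langle F(\zb^*),\zb^k-\zb^*\rangle\le\langle F(\zb^k),\zb^k-\zb^*\rangle$, I would only invoke \eqref{eq:weak-minty} at $\zb^k$, so the monotone argument is paid for by a single extra term $+\alpha\rho\|F(\zb^k)\|^2$, and the outcome should be an inequality of the shape
\[
\L(\zb^{k+1},\zb^*)\le\L(\zb^k,\zb^*)+\Big(\phi-1-\tfrac1\phi\Big)\|\zb^{k+1}-\bzb^k\|^2-\phi\|\zb^k-\bzb^k\|^2+\alpha\rho\|F(\zb^k)\|^2+(\text{Young remnants}),
\]
the remnants being nonnegative multiples of $\|\zb^k-\zb^{k-1}\|^2$ and $\|\zb^{k+1}-\zb^k\|^2$ with weights governed by $\alpha L=2-\phi$.

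\textbf{Passing to operator norms.} In the unconstrained regime relevant here (where $P_C$ acts trivially, the prox relations becoming identities) one has $\zb^{k+1}-\bzb^k=-\alpha F(\zb^k)$, $\ \bzb^k-\zb^k=\tfrac1\phi(\bzb^{k-1}-\zb^k)=\tfrac\alpha\phi F(\zb^{k-1})$, and $\zb^{k+1}-\zb^k=-\alpha F(\zb^k)+\tfrac\alpha\phi F(\zb^{k-1})$. Substituting these converts every quadratic term above into a combination of $\|F(\zb^k)\|^2$, $\|F(\zb^{k-1})\|^2$ (and $\|F(\zb^{k-2})\|^2$, from $\|\zb^k-\zb^{k-1}\|^2$); in particular $\|\zb^{k+1}-\bzb^k\|^2=\alpha^2\|F(\zb^k)\|^2$ and $\|\zb^k-\bzb^k\|^2=\tfrac{\alpha^2}{\phi^2}\|F(\zb^{k-1})\|^2$, so the ``good'' term is $-\phi\|\zb^k-\bzb^k\|^2=-\tfrac{\alpha^2}{\phi}\|F(\zb^{k-1})\|^2$: it sits one index behind the error term $\|\zb^{k+1}-\bzb^k\|^2$, which is exactly the obstruction flagged in Section~\ref{sec: dissection}.

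\textbf{Lyapunov function and telescoping.} I would then take $\Phi_k=\tfrac{\phi}{\phi-1}\|\bzb^k-\zb^*\|^2+\tfrac{2}{\phi-1}\|\zb^k-\bzb^{k-1}\|^2$, so that the index-shifted good term is stored inside $\Phi$; after the bookkeeping the one-iteration inequality should rearrange into
\[
\Phi_{k+1}+\alpha\delta\,\|F(\zb^k)\|^2\le\Phi_k ,
\]
where the coefficient $\alpha\delta$ appears precisely because $\alpha\rho+\alpha\delta=\tfrac{\alpha^2}{\phi}=\tfrac{(2-\phi)^2}{\phi L^2}$ for $\alpha=\tfrac{2-\phi}{L}$, and $\delta=\tfrac{2-\phi}{\phi L}-\rho>0$ is exactly what fixes the sign. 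Summing from $i=1$ to $k$, dropping $\Phi_{k+1}\ge0$, and using $\min_{1\le i\le k}\|F(\zb^i)\|^2\le\tfrac1k\sum_{i=1}^k\|F(\zb^i)\|^2$ gives $\min_{1\le i\le k}\|F(\zb^i)\|^2\le\tfrac{\Phi_1}{\alpha\delta\,k}$; plugging in $\alpha=\tfrac{2-\phi}{L}$ and $\Phi_1=\tfrac{\phi}{\phi-1}\big(\|\bzb^1-\zb^*\|^2+\tfrac2\phi\|\zb^1-\zb^0\|^2\big)$ (using $\bzb^0=\zb^0$) yields exactly the claimed estimate.

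\textbf{Main obstacle.} The delicate part is the bookkeeping in Steps 1--3: carrying the extra $\alpha\rho\|F(\zb^k)\|^2$ cleanly through the whole estimate, choosing the Young parameter in the Lipschitz cross term so that its remnants are dominated after telescoping instead of eating into the $\|F\|^2$ budget, and calibrating the weights in $\Phi_k$ so that the ``one-index-behind'' term $-\phi\|\zb^k-\bzb^k\|^2$ is exactly accounted for. It is this balancing that forces the value $\alpha=\tfrac{2-\phi}{L}$ and makes $\delta$ surface as the per-iteration decrease $\alpha\delta$.
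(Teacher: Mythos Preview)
Your proposed Lyapunov inequality $\Phi_{k+1}+\alpha\delta\,\|F(\zb^k)\|^2\le\Phi_k$ with $\Phi_k=\tfrac{\phi}{\phi-1}\|\bzb^k-\zb^*\|^2+\tfrac{2}{\phi-1}\|\zb^k-\bzb^{k-1}\|^2$ is false. Once you substitute the unconstrained identities (as you yourself suggest), everything reduces to a quadratic form in $\gb^k:=F(\zb^k)$ and $\gb^{k-1}$, and after using $\alpha\rho+\alpha\delta=\tfrac{\alpha^2}{\phi}$ your claimed inequality becomes
\[
\frac{\phi+1}{\phi-1}\|\gb^k\|^2-\frac{2}{\phi}\langle \gb^k,\gb^{k-1}\rangle-\frac{2}{\phi-1}\|\gb^{k-1}\|^2\le 0,
\]
which fails already when $\gb^{k-1}=0$ and $\gb^k\ne 0$. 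No choice of the coefficient in front of $\|\zb^k-\bzb^{k-1}\|^2=\alpha^2\|\gb^{k-1}\|^2$ can repair this: the associated $2\times2$ matrix is never negative semidefinite. The missing ingredient is Lipschitzness, but $\|\gb^k-\gb^{k-1}\|\le L\|\zb^k-\zb^{k-1}\|$ introduces $\gb^{k-2}$, which your Lyapunov does not carry; this is precisely why the correct potential stores $\|\zb^k-\zb^{k-1}\|^2$ rather than $\|\zb^k-\bzb^{k-1}\|^2$.

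There is a second, structural issue. Your plan is to replay Lemma~\ref{lem: one_it} with a weak-Minty correction, bounding the cross term $2\alpha\langle F(\zb^k)-F(\zb^{k-1}),\zb^k-\zb^{k+1}\rangle$ by Young. That analysis needs $\alpha L\le\tfrac{\phi}{2}$ for the Young remnant to be absorbed, whereas the theorem fixes $\alpha L=2-\phi$, which exceeds $\tfrac{\phi}{2}$ for all $\phi<\tfrac43$; and it is exactly the regime $\phi\to1$ that delivers the headline $\rho<\tfrac1L$. The paper's appendix states this explicitly: the monotone-style route only yields $\rho<\tfrac{1}{2L}$. To get $\rho<\tfrac1L$ the paper takes a different route---expand $\|\bzb^{k+1}-\zb^*\|^2$ directly, then \emph{add back} the exact identity
\[
\frac{\phi-1}{\phi^2}\alpha^2\|\gb^k\|^2=\frac{\alpha^2}{\phi^2(\phi-1)}\|\gb^k-\gb^{k-1}\|^2+\frac{2\alpha}{\phi(\phi-1)}\langle \gb^{k-1}-\gb^k,\zb^k-\zb^{k+1}\rangle+\frac{1}{\phi-1}\|\zb^k-\zb^{k+1}\|^2,
\]
apply Young only to the residual inner product, and telescope with $\|\zb^k-\zb^{k-1}\|^2$ in the potential. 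That maneuver, not the choice of Young parameter, is what makes $\alpha=\tfrac{2-\phi}{L}$ admissible and produces $\delta=\tfrac{\alpha}{\phi}-\rho$.
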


In the limiting case when $\phi$ is close to $1$ we can allow for $\rho < \frac 1 L$ in Theorem~\ref{thm:graal-weak-minty}, which is precisely the best possible dependence for the (adaptive) EG$+$ method proven in~\citep{curvature-EG-weak-minty}.
For a more moderate choice, for example $\phi=\varphi=\frac{\sqrt 5 +1}{2}$, the bound on $\rho$ tightens to $\rho < \frac{1}{2L}$. See also Fig.~\ref{fig:polar-game} for empirical evidence of the need to reduce $\phi$ for problems with weak Minty solution.

\begin{figure}[ht]
  \begin{subfigure}[c]{.35\textwidth}
    \includegraphics[width=\linewidth]{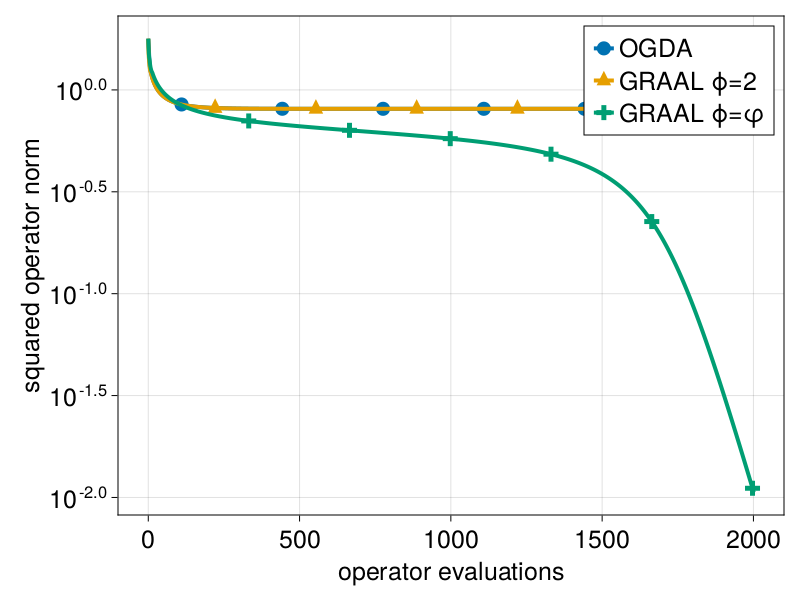}
  \end{subfigure}
  \hspace{.5cm}
  \begin{subfigure}[c]{.35\textwidth}
    \includegraphics[width=\linewidth]{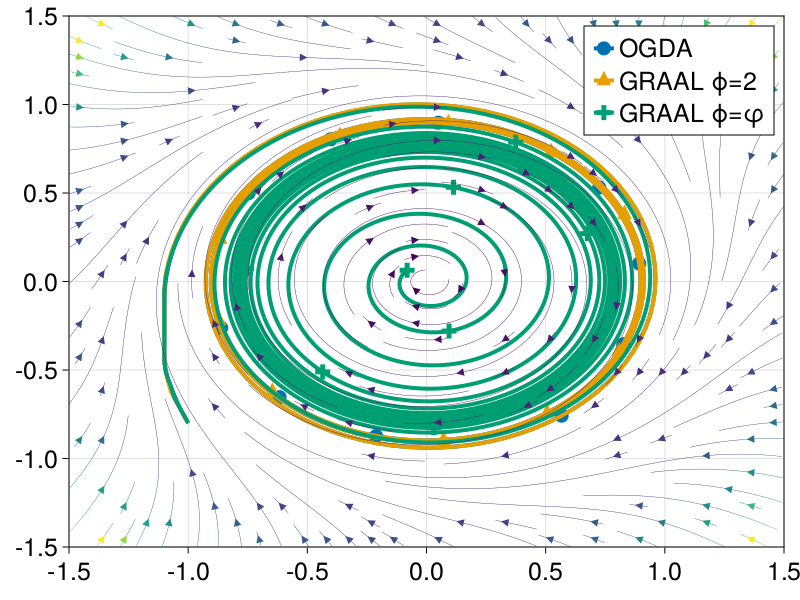}
  \end{subfigure}
  \centering
  \caption{\small A special parametrization of the Polar Game example from~\citep{curvature-EG-weak-minty}
    (see Section~\ref{sub:polar-game} for details), showing the need to reduce $\phi$ for nonmonotone problems with
    weak Minty solutions.\label{fig:polar-game}}
\end{figure}

Theorem~\ref{thm:graal-weak-minty} has the drawback of requiring knowledge of the weak Minty parameter $\rho$ to set $\phi$ such that $\delta > 0$.
In~\citep{curvature-EG-weak-minty} a similar problem was partially circumvented by an elaborate way to choose the corresponding parameter adaptively. Whether something similar can be done here is an open question. In practice, one can use a simple approach: run GRAAL with certain value of $\phi$; if it does not converge, decrease and try again.


\subsection{Adaptive step size}
We continue with the guarantees of aGRAAL under Assumption~\eqref{eq:weak-minty}.
\begin{theorem}\label{thm:agraal-weak-minty}
  Let $F$ be locally Lipschitz and fulfill Assumption~\eqref{eq:weak-minty}, and let $(\zb^k)$ be the sequence generated by~\eqref{eq:agraal}. As long as $\delta := \inf_k \alpha_k \Big(1 + \frac{1}{\phi} - \incr \phi\Big) - \rho > 0$
  we have
  \begin{equation*}
    \min_{i=1,\dots,k+1}  \Vert F(\zb^i) \Vert^2  \le \frac{D}{\delta \sum_{i=1}^{k+1} \alpha_i } \le \frac{c L D }{k \delta}, 
  \end{equation*}
  where $c$ denotes the constant given in Lemma~\ref{lem: main_sec4}.
\end{theorem}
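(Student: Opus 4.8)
The plan is to mirror the structure of Theorem~\ref{thm:graal-weak-minty} but work with the adaptive step sizes $\alpha_k$ and then invoke the sum-of-step-sizes bound from Lemma~\ref{lem: main_sec4}. First I would revisit the one-iteration descent inequality underlying Lemma~\ref{lem: one_it_adap}; that lemma only extracted the $G$-term needed for the gap bound, but the same computation (the one iteration analysis of \cite[Theorem 2]{malitsky2020golden}) also produces, before dropping nonnegative terms, an inequality of the form
\begin{equation*}
  \Ecal_k(\zb^*) + 2\alpha_k \langle F(\zb^k), \zb^k - \zb^*\rangle \le \Ecal_{k-1}(\zb^*) - \Big(1 + \tfrac{1}{\phi} - \incr\phi\Big)\n{\zb^k - \zb^{k-1}}^2 + (\text{good terms}),
\end{equation*}
where $\Ecal_k$ is the adaptive analogue of the Lyapunov function in~\eqref{eq: we3}. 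The key move is: instead of using monotonicity and the VI to lower bound $\langle F(\zb^k), \zb^k-\zb^*\rangle$ by $0$, I use the weak Minty inequality~\eqref{eq:weak-minty} to lower bound it by $-\tfrac{\rho}{2}\n{F(\zb^k)}^2$. I then need to relate $\n{F(\zb^k)}$ to the quantity $\n{\zb^k - \zb^{k-1}}$ (or to $\alpha_k\n{F(\zb^k)}$), using the step-size rule: the second branch of~\eqref{eq: alpha_new} enforces $\alpha_k\alpha_{k-2} \cdot \tfrac{4}{\phi^2}\n{F(\zb^k)-F(\zb^{k-1})}^2 \le \n{\zb^k-\zb^{k-1}}^2$, and local Lipschitzness controls the remaining pieces, so that after rearranging one obtains a term $-\delta_k \n{F(\zb^k)}^2$ on the right-hand side with $\delta_k = \alpha_k(1 + \tfrac{1}{\phi} - \incr\phi) - \rho \ge \delta > 0$ by hypothesis. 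The precise bookkeeping here — matching which iterate's step size multiplies which squared-norm term — is the main obstacle, and it is why the definition of $\delta$ in the statement involves $\inf_k \alpha_k$: one wants a \emph{uniform} lower bound across all iterations.

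Having established the per-iteration inequality
\begin{equation*}
  \Ecal_k(\zb^*) + \delta \n{F(\zb^k)}^2 \le \Ecal_{k-1}(\zb^*),
\end{equation*}
(with the factor $\alpha_k$ absorbed appropriately, or more precisely $2\alpha_k\langle F(\zb^k),\zb^k-\zb^*\rangle + \delta'_k\n{F(\zb^k)}^2 \le \Ecal_{k-1}-\Ecal_k$ so the left telescopes against $\alpha_k$-weighted quantities), I would sum from $i=1$ to $k+1$. The $\Ecal$ terms telescope to $\Ecal_0(\zb^*) = D$ (after bounding the maximum over $S$ exactly as in Lemma~\ref{lem: one_it_adap}, using that the iterates stay in the bounded set), the nonnegative terminal term $\Ecal_{k+1}\ge 0$ is dropped, and what remains is $\sum_{i=1}^{k+1} \delta_i \n{F(\zb^i)}^2 \le D$, which I would weaken to $\delta \sum_{i=1}^{k+1} \alpha_i \cdot \min_{i\le k+1}\n{F(\zb^i)}^2 \le D$ if the $\n{F(\zb^i)}^2$ terms carry $\alpha_i$ weights, or simply $\delta(k+1)\min_i\n{F(\zb^i)}^2 \le D$ if they do not — a careful reading of the one-iteration lemma determines which; the statement's first bound $\frac{D}{\delta\sum \alpha_i}$ indicates the weighted version. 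Either way this gives $\min_{i=1,\dots,k+1}\n{F(\zb^i)}^2 \le \frac{D}{\delta\sum_{i=1}^{k+1}\alpha_i}$.

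Finally, for the second inequality I would apply Lemma~\ref{lem: main_sec4} directly: since $\alpha_0$ is chosen via the linesearch~\eqref{eq:lns}, that lemma gives $\sum_{i=1}^{k+1}\alpha_i \ge \frac{kc}{L}$, and substituting yields $\frac{D}{\delta\sum_{i=1}^{k+1}\alpha_i} \le \frac{LD}{c k \delta}$. Wait — the direction of the constant: Lemma~\ref{lem: main_sec4} lower-bounds the sum by $\frac{(k-1)c}{L}$ for index range up to $k$, so for range up to $k+1$ it is $\ge\frac{kc}{L}$, hence $\frac{D}{\delta\sum_{i=1}^{k+1}\alpha_i}\le \frac{LD}{\delta k c}$; comparing with the claimed $\frac{cLD}{k\delta}$ suggests the paper's $c$ in this theorem is the reciprocal of (or absorbs a reshuffling of) the one in Lemma~\ref{lem: main_sec4}, which I would reconcile by just carrying the symbol through and noting the relation. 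The genuinely new content beyond Theorem~\ref{thm:graal-weak-minty} is only the first step — extracting a weak-Minty descent inequality that survives the adaptive, nonmonotone step-size rule — and the rest is assembling previously proven lemmas.
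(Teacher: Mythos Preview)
Your scaffolding is right—replace monotonicity by~\eqref{eq:weak-minty} in the one-iteration descent, telescope, then invoke Lemma~\ref{lem: main_sec4}—but the step you yourself flag as ``the main obstacle'' is handled by the wrong mechanism, and that mechanism would not close. The second branch of the step-size rule and local Lipschitz continuity only control $\Vert F(\zb^k)-F(\zb^{k-1})\Vert$ against $\Vert \zb^k-\zb^{k-1}\Vert$; neither gives any usable lower bound on an iterate difference in terms of $\Vert F(\zb^k)\Vert$ alone, which is what you would need to turn your displayed negative term $-(1+\tfrac{1}{\phi}-\gamma\phi)\Vert\zb^k-\zb^{k-1}\Vert^2$ into $-\delta_k\Vert F(\zb^k)\Vert^2$. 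So ``local Lipschitzness controls the remaining pieces'' is exactly where the argument stalls.

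The paper's route is far more direct and rests on the fact that Section~5 is implicitly \emph{unconstrained}. There $\zb^{k+1}=\bzb^k-\alpha_k F(\zb^k)$ holds as an identity, so the residual term in the adaptive descent inequality is literally $\alpha_k^2\Vert F(\zb^k)\Vert^2$, with no approximation and no further appeal to the step-size rule. Concretely, the paper starts from \cite[ineq.\ (34)]{malitsky2020golden}, which in the present notation already reads
\[
  \Ecal(\zb^{k+1},\zb)\;\le\; \Ecal(\zb^k,\zb)+\Big(\theta_k-1-\tfrac{1}{\phi}\Big)\alpha_k^2\Vert F(\zb^k)\Vert^2-\theta_k\Vert \zb^k-\bzb^k\Vert^2-2\alpha_k\langle F(\zb^k),\zb^k-\zb\rangle,
\]
and applying~\eqref{eq:weak-minty} to the inner product together with $\theta_k\le\gamma\phi$ produces the coefficient $\alpha_k\big(\alpha_k(1+\tfrac{1}{\phi}-\gamma\phi)-\rho\big)\ge\alpha_k\delta$ in front of $\Vert F(\zb^k)\Vert^2$ in one line. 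The good term $\theta_k\Vert\zb^k-\bzb^k\Vert^2$ is simply dropped for the theorem; it is only exploited—again via the unconstrained identity, which gives $\theta_k\Vert\zb^k-\bzb^k\Vert^2=\tfrac{\alpha_k\alpha_{k-1}}{\phi}\Vert F(\zb^{k-1})\Vert^2$—in Corollary~\ref{cor:agraal-weak-minty-bounded-iter}. Your closing remark about the final bound is correct: Lemma~\ref{lem: main_sec4} places $c$ in the denominator, so the displayed $\tfrac{cLD}{k\delta}$ should read $\tfrac{LD}{ck\delta}$.
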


\begin{remark}\label{rem:wm-lr}
  In particular, picking $\phi=1.1$ and $\incr = 1.1$ gives the condition $\rho < 0.69 \alpha_k$. As $\phi$ and $\incr$ get closer to $1$, we can allow for $\rho < \alpha_k$, which would precisely be the dependence between $\rho$ and the step size proven in~\citep{curvature-EG-weak-minty} for CurvatureEG$+$, a version of EG with linesearch.
\end{remark}

\begin{corollary}\label{cor:agraal-weak-minty-bounded-iter}
  In the setting of Theorem~\ref{thm:agraal-weak-minty}, if the iterates happen to remain bounded, we only require
  \begin{equation*}
    \inf_k \, \frac{\alpha_{k+1}}{\phi} + \alpha_k\Big( 1 + \frac{1}{\phi} - \incr\phi  \Big)- \rho  > 0,
  \end{equation*}
  to deduce a similar convergence statement, but with modified constant.
\end{corollary}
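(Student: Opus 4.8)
The plan is to revisit the one-iteration analysis behind Theorem~\ref{thm:agraal-weak-minty} and track where the boundedness of the iterates was actually used. Recall that in the proof of Theorem~\ref{thm:agraal-weak-minty}, one combines the prox-inequality for $\zb^{k+1}=P_C(\bzb^k-\alpha_k F(\zb^k))$ with the weak Minty condition~\eqref{eq:weak-minty} evaluated at $\zb=\zb^k$, and then telescopes an energy of the form $\frac{\phi}{\phi-1}\|\bzb^{k+1}-\zb^*\|^2+\frac{\theta_k}{2}\|\zb^{k+1}-\zb^k\|^2$. The only place where unboundedness of $C$ forces an extra argument is the step size lower bound $\sum_{i=1}^{k+1}\alpha_i\geq cL^{-1}k$, which in turn relied on $L$ being the Lipschitz constant over $\clconv\{\zb^0,\zb^1,\dots\}$ and on the linesearch initialization in~\eqref{eq:lns}. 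Here we are instead \emph{assuming} the iterates are bounded a priori, so the plan is to drop the quantitative $\sum\alpha_i$ bound entirely and argue directly from the telescoped inequality.

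Concretely, first I would write the per-iteration descent inequality in the sharpened form that keeps the extra negative term, i.e. something like
\begin{equation*}
\L(\zb^{k+1},\zb^*)\;\le\;\L(\zb^k,\zb^*)\;-\;\Big(\tfrac{\alpha_{k+1}}{\phi}+\alpha_k\big(1+\tfrac1\phi-\incr\phi\big)-\rho\Big)\|F(\zb^k)\|^2,
\end{equation*}
where $\L$ is the Lyapunov function above; this is exactly the bookkeeping already present in the proof of Theorem~\ref{thm:agraal-weak-minty}, except one does not bound $\alpha_{k+1}/\phi$ from below by a multiple of $\alpha_k$ and absorb it, but rather keeps it separate. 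Then the hypothesis $\inf_k\big(\tfrac{\alpha_{k+1}}{\phi}+\alpha_k(1+\tfrac1\phi-\incr\phi)-\rho\big)=:\tilde\delta>0$ lets one telescope from $1$ to $k$ to obtain $\tilde\delta\sum_{i=1}^{k}\|F(\zb^i)\|^2\le \L(\zb^1,\zb^*)\le D$, hence $\min_{i\le k}\|F(\zb^i)\|^2\le D/(\tilde\delta k)$. The ``modified constant'' in the statement is precisely the replacement of $\delta$ by $\tilde\delta$ and the fact that $D$ (or a closely related bound on $\L(\zb^1,\zb^*)$) now comes from the assumed boundedness of the iterates rather than being derived.

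The step I expect to require the most care is verifying that the weak Minty inequality can still be applied at $\zb=\zb^k$ without invoking anything beyond local Lipschitzness: since $(\zb^k)$ is assumed bounded, $F$ is Lipschitz with some constant $L$ on $\clconv\{\zb^k\}$, which is all that is needed for the cross terms $\langle F(\zb^k)-F(\zb^{k-1}),\cdot\rangle$ in the telescoping, and the step size recursion~\eqref{eq: ss_def}/\eqref{eq: alpha_new} then guarantees $\alpha_k$ is bounded away from $0$ so that $\tilde\delta$ is a genuine positive infimum rather than a limit equal to zero. The remaining subtlety is the index shift at $k=1$: as in Lemma~\ref{lem: one_it_adap}, the choice $\zb^1=P_C(\zb^0-\alpha_0F(\zb^0))$ together with $\bzb^0=\zb^0$ makes the one-iteration inequality valid already from $k=1$, so the telescoped sum really does start at $i=1$ and no separate treatment of an ``initial tail'' is needed here — that combinatorial argument from Lemma~\ref{lem: main_sec4} is exactly what the boundedness assumption allows us to bypass.
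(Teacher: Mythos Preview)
Your plan misidentifies where boundedness enters. The single-iteration inequality obtained from the weak Minty analysis is
\[
\L(\zb^{k+1},\zb^*)\le \L(\zb^k,\zb^*)
-\alpha_k\Big(\alpha_k\big(1+\tfrac{1}{\phi}-\theta_k\big)-\rho\Big)\|F(\zb^k)\|^2
-\frac{\alpha_{k-1}\alpha_k}{\phi}\|F(\zb^{k-1})\|^2,
\]
and the extra negative term carries $\|F(\zb^{k-1})\|^2$, not $\|F(\zb^k)\|^2$. There is no way to have $\tfrac{\alpha_{k+1}}{\phi}$ appear in a genuine one-step recursion; it only shows up after summing and shifting the index of the last term by one. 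That shift produces an uncancelled boundary contribution $\tfrac{\alpha_{k+1}\alpha_k}{\phi}\|F(\zb^k)\|^2$ on the right-hand side of the telescoped estimate, and it is precisely this remainder that forces the boundedness hypothesis: one uses $\alpha_k^2\|F(\zb^k)\|^2=\|\zb^{k+1}-\bzb^k\|^2$ and $\alpha_{k+1}/\alpha_k\le\gamma$ to bound it by $\tfrac{\gamma}{\phi}\|\zb^{k+1}-\bzb^k\|^2$, which is finite exactly when the iterates are bounded. Your write-up drops this boundary term entirely, so the telescoped inequality you state does not hold.

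Two smaller issues compound this. First, the coefficient in front of $\|F(\zb^k)\|^2$ carries an overall factor $\alpha_k$, so after telescoping you obtain $\tilde\delta\sum_i\alpha_i\min_i\|F(\zb^i)\|^2$ on the left, not $\tilde\delta\, k\min_i\|F(\zb^i)\|^2$; the lower bound on $\sum_i\alpha_i$ from Lemma~\ref{lem: main_sec4} is therefore still needed for a rate and is \emph{not} what boundedness lets you bypass. Second, your narrative that ``boundedness replaces the step-size combinatorics'' is backwards: in Theorem~\ref{thm:agraal-weak-minty} the extra $\tfrac{\alpha_{k-1}\alpha_k}{\phi}\|F(\zb^{k-1})\|^2$ term is nonnegative and simply discarded, so no boundary issue arises; the point of the corollary is to \emph{keep} that term to relax the condition on $\rho$, and keeping it is what creates the remainder that only boundedness can control.
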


\begin{remark}
  The condition in Corollary~\ref{cor:agraal-weak-minty-bounded-iter} between the step sizes and the weak Minty parameter $\rho$ reduces, in the limiting case $\phi, \incr \to 1$, to $\alpha_{k+1}+\alpha_k > \rho$. This corresponds to doubling of the range of $\rho$ presented in Remark~\ref{rem:wm-lr}.
\end{remark}

\begin{remark}[Lower bounding $\alpha_k$.]
In this section we relaxed the condition between the Lipschitz constant $L$ and parameter $\rho$ in~\eqref{eq:weak-minty} to a condition between $\alpha_k$ and $\rho$. We do so in the hope that aGRAAL is able to take larger steps, since they are based on the local Lipschitz constants. Unfortunately, we cannot give a good lower bound on any individual $\alpha_k$ in general. We see from the experiments that the step size sometimes does become lower than the one chosen by linesearch (see Fig.~\ref{fig:forsaken}). This seems to, however, not harm the performance of the algorithm.
\end{remark}

\subsection{Experiments}
As discussed in the main section of the paper, in order to solve weak Minty problems, for all known methods two things are important: \emph{(i)} the method needs to be modified in a way that could be interpreted as making it more ``conservative''; \emph{(ii)} step sizes should be large.

In the case of the golden ratio algorithm, the former means averaging with more of the (old) $\bar{\zb}^{k-1}$ iterate, i.e.\ decreasing $\phi$, see Fig.~\ref{fig:forsaken}.  For EG this means reducing the step size in the update of the $\zb^k$ sequence, as proposed in~\citep{weak-minty-eg,curvature-EG-weak-minty}. For the constant step size methods in question it seems like this is all we can do. However, by choosing the step sizes adaptively, be it via linesearch as for the CurvatureEG$+$ method from~\citep{curvature-EG-weak-minty}, or by~\eqref{eq: ss_def} in the case of~\eqref{eq:agraal}, we can hope to take steps larger than the global Lipschitz constant would allow.

\subsubsection{Forsaken}%

In Fig.~\ref{fig:forsaken} the following problem formulation is used, which originated in~\cite[Example 5.2]{limits-of-minimax} as a particularly difficult instance of min-max under the name of ``Forsaken'':
\begin{equation*}
  \min_{x\in \R} \, \max_{y\in \R} \, x(y-0.45) + f(x) - f(y),
\end{equation*}
where $f(z) = \frac{1}{4}z^2 - \frac12 z^4 + \frac16 z^6$. This problem exhibits a solution at $(x^*,y^*)\approx (0.08, 0.4)$, but also \emph{two} limit cycles not containing any critical point of the objective function.

\begin{figure}[ht]
  \begin{subfigure}[c]{.3\textwidth}
    \includegraphics[width=\linewidth]{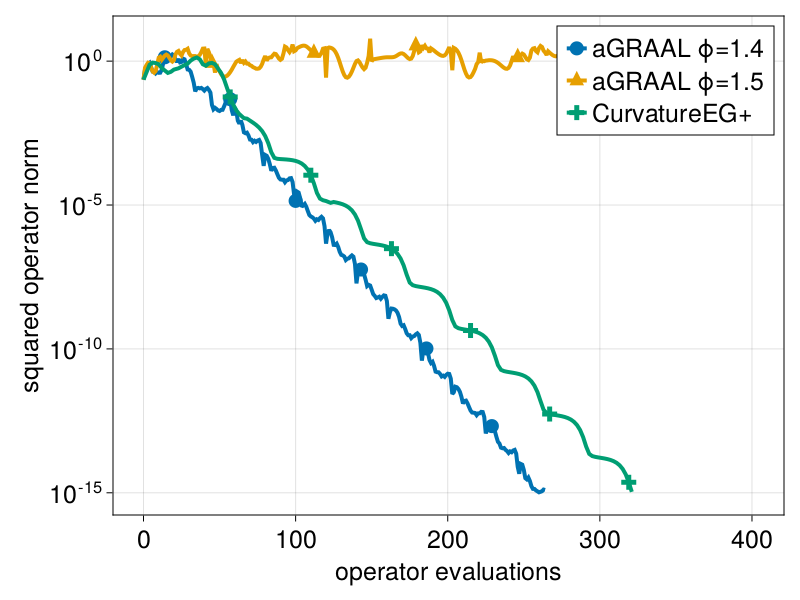}
  \end{subfigure}
  \hspace{.3cm}
  \begin{subfigure}[c]{.3\textwidth}
    \includegraphics[width=\linewidth]{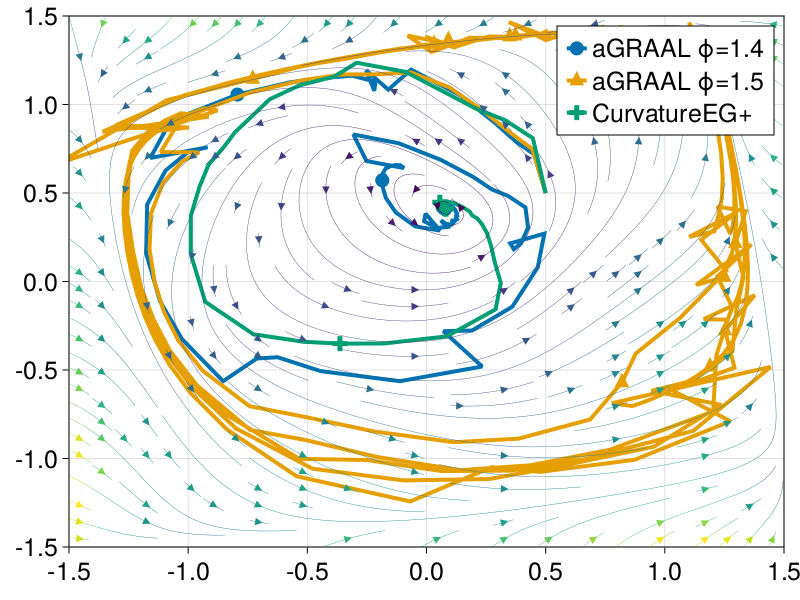}
  \end{subfigure}
  \hspace{.3cm}
  \begin{subfigure}[c]{.3\textwidth}
    \includegraphics[width=\linewidth]{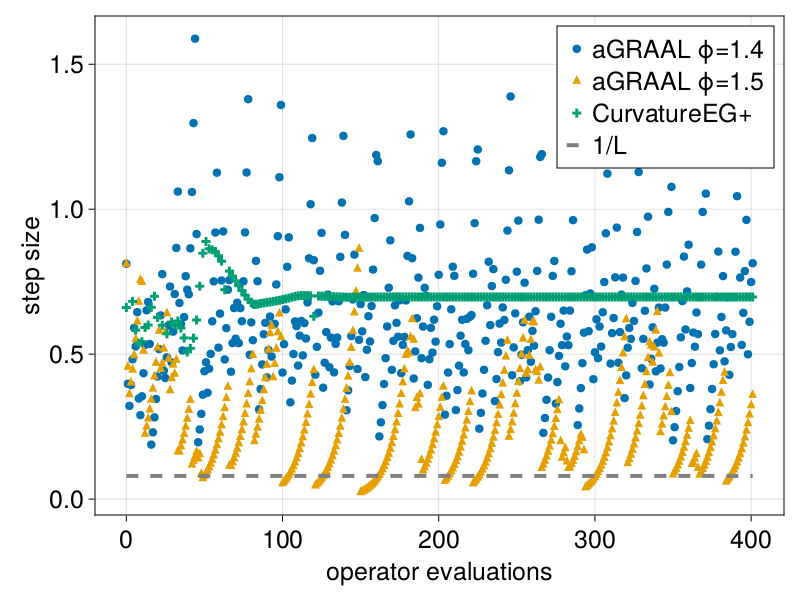}
  \end{subfigure}
  \centering
  \caption{{\small The ``Forsaken'' example of~\cite[Example 5.2]{limits-of-minimax}, which is further explored in~\citep{curvature-EG-weak-minty}. If $\phi$ is too large in aGRAAL, the method gets stuck in the limit cycle. Only for small $\phi$ the method converges, and does so rapidly.\label{fig:forsaken}}}
\end{figure}

\subsubsection{Polar Game}%
\label{sub:polar-game}

Fig.~\ref{fig:polar-game} and~\ref{fig:polar-game-adaptive} display results on
the so-called Polar Game introduced in~\cite[Example 3]{curvature-EG-weak-minty}
using the following parametrization for $a>0$
\begin{equation*}
  F(x,y) = (\psi(x,y)-y, \psi(y,x)+x),
\end{equation*}
where $\psi(x,y) =  a \frac{1}{4} x (-1+x^2+y^2)(-1 + 4x^2 + 4y^2)$. The problem exhibits a limit cycle attracting solutions away from the solution in the center.

\begin{figure}[ht]
  \begin{subfigure}[c]{.3\textwidth}
    \includegraphics[width=\linewidth]{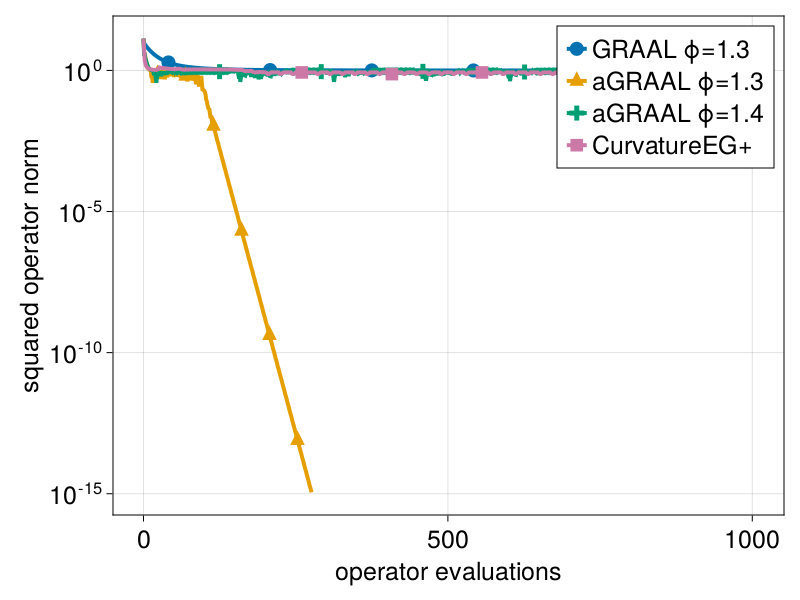}
  \end{subfigure}
  \hspace{.3cm}
  \begin{subfigure}[c]{.3\textwidth}
    \includegraphics[width=\linewidth]{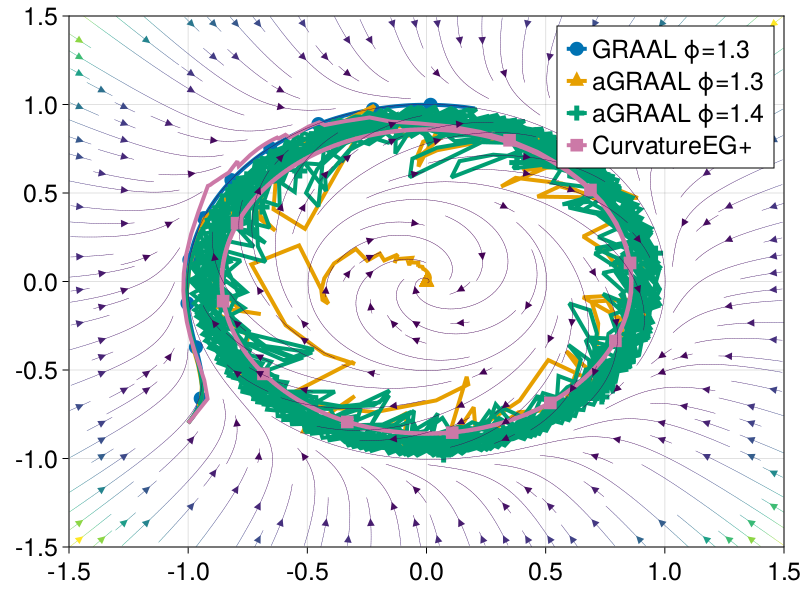}
  \end{subfigure}
  \hspace{.3cm}
  \begin{subfigure}[c]{.3\textwidth}
    \includegraphics[width=\linewidth]{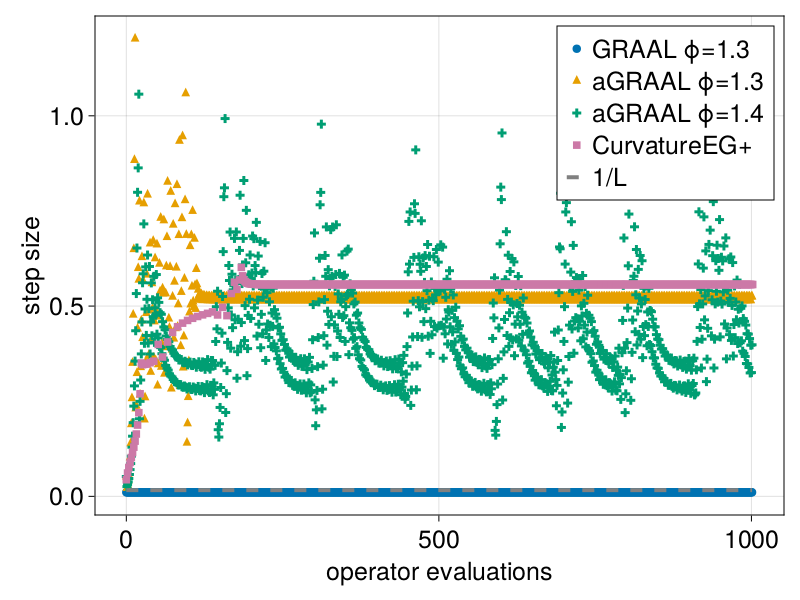}
  \end{subfigure}
  \centering
  \caption{\small{A different parametrization of the Polar Game, $a=3$, showing the importance of adaptive step sizes. We observe that~\eqref{eq:agraal} is the only method able to converge.\label{fig:polar-game-adaptive}}}
\end{figure}

Fig.~\ref{fig:polar-game-adaptive} shows that reducing $\phi$ alone will not be enough for problems where $\rho > \frac{1}{L}$ is not satisfied (the blue line does not converge). One has to choose larger step sizes.

\subsubsection{A lower bound~\citep[Example 5]{curvature-EG-weak-minty}}

In~\cite{curvature-EG-weak-minty} it was shown that EG$+$ (with arbitrary small second step size) may diverge if $\rho > \frac{1}{L}$ via the following unconstrained min-max problems
\begin{equation*}
  \min_{x\in \R} \, \max_{y\in \R} \, a xy + \frac{b}{2}(x^2-y^2),
\end{equation*}
for $a>0$ and $b<0$. The associated operator $F$, simply given by
\begin{equation*}
  F(x,y) = (ay + bx, by-ax)
\end{equation*}
\begin{figure}[ht]
  \begin{subfigure}[c]{.3\textwidth}
    \includegraphics[width=\linewidth]{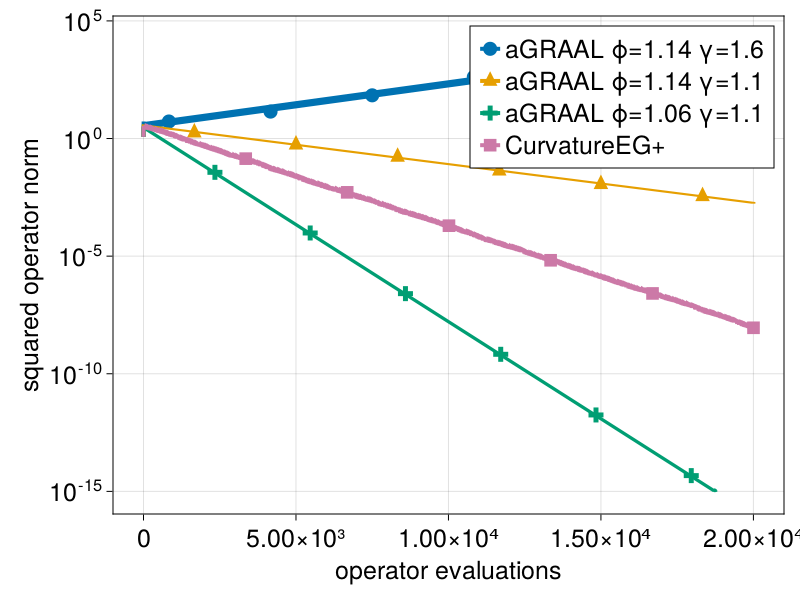}
  \end{subfigure}
  \hspace{.5cm}
  \begin{subfigure}[c]{.3\textwidth}
    \includegraphics[width=\linewidth]{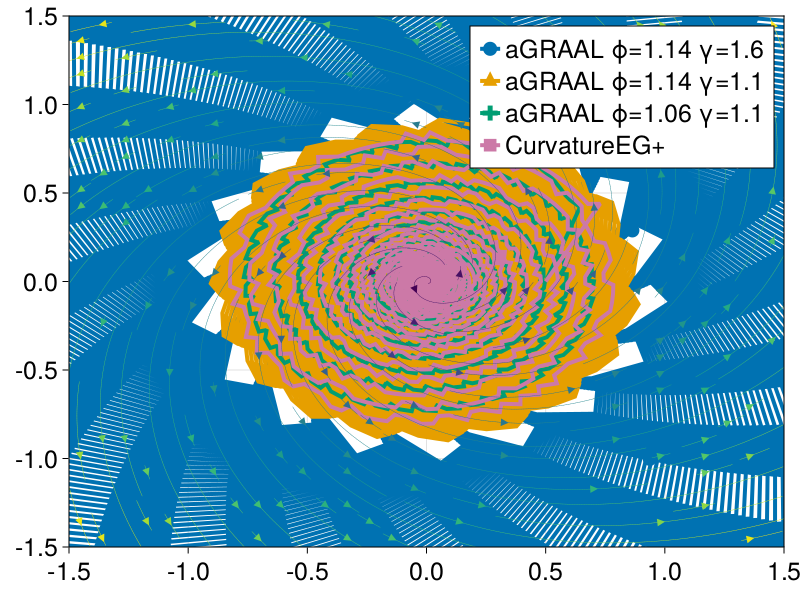}
  \end{subfigure}
  \hspace{.5cm}
  \begin{subfigure}[c]{.3\textwidth}
    \includegraphics[width=\linewidth]{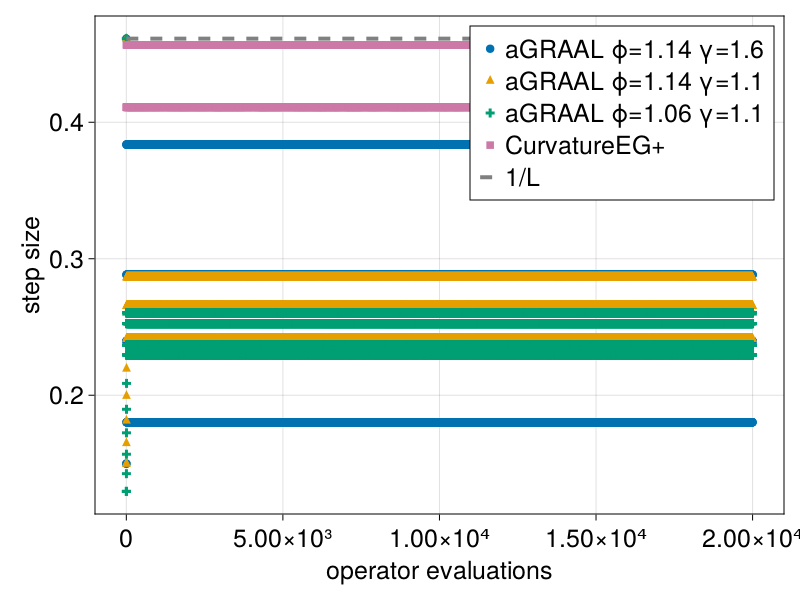}
  \end{subfigure}
  \centering
  \caption{\small{A special parametrization ($a^2=3.7, b=-1$) of~\cite[Example 5]{curvature-EG-weak-minty}, illustrating the fact that sometimes $\gamma$ does indeed need to be chosen smaller as predicted by theory. \label{fig:lower-bound-adaptive}}}
\end{figure}
 is Lipschitz with $L=\sqrt{a^2+b^2}$ and $(0, 0)$ is a weak Minty solution with constant $\rho = -\frac{2 b}{a^2+b^2}$.

In Fig.~\ref{fig:lower-bound-adaptive} we see that, as predicted by Theorem~\ref{thm:agraal-weak-minty} and Corollary~\ref{cor:agraal-weak-minty-bounded-iter}, not only reducing $\phi$ but also decreasing $\gamma$ can prevent divergence. At first glance, this is somewhat surprising as larger $\gamma$ allows for a faster increase of the step size from one iteration to the next, and we have already seen that larger step sizes are important for convergence. We suspect this to be rooted in the special step size rule of aGRAAL, see~\eqref{eq: ss_def}, where one large step can lead to decrease in the step size of the next iteration.

\acks{The work of Axel B\"ohm was funded by the Austrian Science Fund (FWF): W1260-N35.
The work of Ahmet Alacaoglu was funded by NSF awards 2023239 and 2224213; and DOE
ASCR Subcontract 8F-30039 from Argonne National Laboratory. The work of Yura
Malitsky was supported by the Wallenberg Al, Autonomous Systems and Software
Program funded by the Knut and Alice Wallenberg Foundation, No.~305286.
}

\newpage

\appendix
\section{Details on Section~\ref{sec: 2raal}}\label{sec:appendix}
For convenience, we give a full description of the algorithm
 \begin{algorithm}[H]
   \caption{GRAAL}\label{alg:graal}
   \algorithmicrequire{ $\bzb^{-1} = \zb^0 \in C$, $\phi \in (1, 2]$.} 
   \begin{algorithmic}[1]
     \For{$k\geq0$}
        \State  $\bzb^k = \frac{\phi-1}{\phi} \zb^k + \frac{1}{\phi}\bzb^{k-1}$
        \State $\zb^{k+1} = P_C(\bzb^k - \alpha F(\zb^k))$
     \EndFor{}
   \end{algorithmic}
 \end{algorithm}

\subsection{Proof of Corollary~\ref{cor: as3}}
We start with the following lemma which essentially summarizes the existing result of~\citep{malitsky2020golden} for one iteration of the algorithm.
Letting $\phi=2$ and $\varepsilon=0$ in this lemma gives~\eqref{eq: fe2} which is the starting point of Theorem~\ref{thm:bounded}.
\begin{lemma}[based on Theorem 2 in~\citep{malitsky2020golden}]\label{lem: one_it}
Let Assumption~\ref{as: as1} hold and $\alpha = \frac{\phi-2\varepsilon}{2L}$ for any $\varepsilon\in[0, \phi/2)$ in~\ref{eq:graal}. Then, for any $\zb\in C$
\begin{multline*}
G(\zb^k, \zb)+\frac{\phi}{\phi-1} \| \bzb^{k+1} - \zb\|^2 + \frac{\phi}{2} \| \zb^{k+1} - \zb^{k}\|^2 + \left( \phi - 1 - \frac{1}{\phi} \right) \| \zb^{k+1} - \bzb^k \|^2 \\
\leq \frac{\phi}{\phi-1} \| \bzb^{k} - \zb\|^2 + \frac{(\phi-2\varepsilon)^2}{2\phi} \| \zb^{k} - \zb^{k-1}\|^2 - \frac{1}{\phi} \| \zb^{k} - \bzb^{k-1} \|^2.
\end{multline*}
\end{lemma}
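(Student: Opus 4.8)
This is, up to the specialization to the constant step size $\alpha=\frac{\phi-2\varepsilon}{2L}$, the one‑iteration estimate of \cite[Theorem 2]{malitsky2020golden}, so I would reproduce that argument. The starting point is the variational characterization of the projection defining $\zb^{k+1}=P_C(\bzb^k-\alpha F(\zb^k))$, namely $\lr{\zb^{k+1}-\bzb^k+\alpha F(\zb^k),\ \zb-\zb^{k+1}}\ge 0$ for every $\zb\in C$. Applying the three‑point identity $\lr{a-b,\ c-a}=\tfrac12\n{c-b}^2-\tfrac12\n{a-b}^2-\tfrac12\n{a-c}^2$ to $\lr{\zb^{k+1}-\bzb^k,\zb-\zb^{k+1}}$ turns this into $\alpha\lr{F(\zb^k),\zb^{k+1}-\zb}\le\tfrac12\n{\bzb^k-\zb}^2-\tfrac12\n{\zb^{k+1}-\zb}^2-\tfrac12\n{\bzb^k-\zb^{k+1}}^2$. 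Splitting $\lr{F(\zb^k),\zb^{k+1}-\zb}=\lr{F(\zb^k),\zb^{k+1}-\zb^k}+\lr{F(\zb^k),\zb^k-\zb}$ and using monotonicity, $\alpha\lr{F(\zb^k),\zb^k-\zb}\ge\alpha\lr{F(\zb),\zb^k-\zb}=\tfrac12 G(\zb^k,\zb)$, which is exactly what introduces the gap term $G$.

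The crux is bounding the leftover inner product $-\alpha\lr{F(\zb^k),\zb^{k+1}-\zb^k}$, which I would do by writing $F(\zb^k)=\bigl(F(\zb^k)-F(\zb^{k-1})\bigr)+F(\zb^{k-1})$. For the operator difference, Cauchy--Schwarz, $L$‑Lipschitzness and a weighted Young's inequality give $-\alpha\lr{F(\zb^k)-F(\zb^{k-1}),\zb^{k+1}-\zb^k}\le\tfrac{\alpha L}{2s}\n{\zb^k-\zb^{k-1}}^2+\tfrac{\alpha L s}{2}\n{\zb^{k+1}-\zb^k}^2$, and the choice $s=\tfrac{\phi}{\phi-2\varepsilon}$ combined with $2\alpha L=\phi-2\varepsilon$ pins down exactly the coefficient $\tfrac{(\phi-2\varepsilon)^2}{2\phi}$ on $\n{\zb^k-\zb^{k-1}}^2$ (after rescaling) together with a $\tfrac\phi2$ multiple of $\n{\zb^{k+1}-\zb^k}^2$. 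For the remaining term $-\alpha\lr{F(\zb^{k-1}),\zb^{k+1}-\zb^k}$, I would invoke the projection characterization at the previous iteration, $\lr{\zb^k-\bzb^{k-1}+\alpha F(\zb^{k-1}),\ \zb^{k+1}-\zb^k}\ge 0$, to get $-\alpha\lr{F(\zb^{k-1}),\zb^{k+1}-\zb^k}\le\lr{\bzb^{k-1}-\zb^k,\zb^k-\zb^{k+1}}$, then substitute the identity $\bzb^{k-1}-\zb^k=\phi(\bzb^k-\zb^k)$ (immediate from $\bzb^k=\tfrac{\phi-1}{\phi}\zb^k+\tfrac1\phi\bzb^{k-1}$) and apply the three‑point identity on $(\bzb^k,\zb^k,\zb^{k+1})$. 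This single step simultaneously produces the ``good'' negative term $-\phi\n{\bzb^k-\zb^k}^2$, which is precisely $-\tfrac1\phi\n{\zb^k-\bzb^{k-1}}^2$, the last term on the right of the claim, and the error term $\tfrac\phi2\n{\bzb^k-\zb^{k+1}}^2$.

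To finish, I would convert $\n{\zb^{k+1}-\zb}^2$ into $\tfrac{\phi}{\phi-1}\n{\bzb^{k+1}-\zb}^2$ via the convex‑combination identity $\n{\bzb^{k+1}-\zb}^2=\tfrac{\phi-1}{\phi}\n{\zb^{k+1}-\zb}^2+\tfrac1\phi\n{\bzb^k-\zb}^2-\tfrac{\phi-1}{\phi^2}\n{\zb^{k+1}-\bzb^k}^2$, which also releases an extra $\tfrac1\phi\n{\zb^{k+1}-\bzb^k}^2$. Collecting terms: the $\n{\bzb^k-\zb}^2$ contributions add up to coefficient $\tfrac{\phi}{\phi-1}$ on the right, the $\n{\zb^{k+1}-\zb^k}^2$ terms to $\tfrac\phi2$ on the left, the $\n{\zb^k-\zb^{k-1}}^2$ term to $\tfrac{(\phi-2\varepsilon)^2}{2\phi}$, and the several $\n{\zb^{k+1}-\bzb^k}^2$ contributions to the coefficient $\phi-1-\tfrac1\phi$ (appearing on the side that renders the estimate valid), which is the claimed inequality; setting $\varepsilon=0$ then recovers \eqref{eq: fe2}, the starting point of Theorem~\ref{thm:bounded}. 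The main obstacle is exactly this last bookkeeping: the Young weight $s$ and the way the previous‑iteration projection inequality is folded in must be arranged so that the coefficient of $\n{\zb^{k+1}-\bzb^k}^2$ comes out as $\phi-1-\tfrac1\phi$ --- the quantity that vanishes at the golden ratio $\varphi$ and whose sign is what forces $\phi\le\varphi$ in the telescoped analysis of~\citep{malitsky2020golden}.
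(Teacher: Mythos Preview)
Your proposal is correct and follows essentially the same route as the paper's proof: the paper likewise combines the projection inequality for $\zb^{k+1}$ with the one for $\zb^k$ (rewritten via $\zb^k-\bzb^{k-1}=\phi(\zb^k-\bzb^k)$), applies Lipschitzness plus Young's inequality to the cross term $2\alpha\lr{F(\zb^k)-F(\zb^{k-1}),\zb^k-\zb^{k+1}}$, and then uses the convex-combination identity \eqref{eq:17_a} to pass from $\n{\zb^{k+1}-\zb}^2$ to $\tfrac{\phi}{\phi-1}\n{\bzb^{k+1}-\zb}^2$. The only cosmetic difference is ordering---the paper sums the two prox-inequalities first and then expands into norms, whereas you treat them sequentially via three-point identities---and your explicit Young weight $s=\tfrac{\phi}{\phi-2\varepsilon}$ is exactly the implicit choice in the paper's \eqref{eq:19_a}.
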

\begin{proof}[of Lemma~\ref{lem: one_it}]
By applying prox-inequality to the definition of $\zb^{k+1}$, we have for any $\zb\in C$ that
  \begin{equation}
    \label{eq:11_a}
    \langle \zb^{k+1} - \bar{\zb}^k + \alpha F(\zb^k), \zb - \zb^{k+1} \rangle \geq 0.
  \end{equation}
  Similarly, with the definition of $\zb^k$, we have
  \begin{equation}
    \label{eq:12_a}
    \langle \zb^k - \bar{\zb}^{k-1} + \alpha F(\zb^{k-1}), \zb^{k+1} - \zb^k \rangle \geq 0.
  \end{equation}
  Since $\zb^k-\bar{\zb}^{k-1} = \frac{1+\phi}{\phi}(\zb^k-\bar{\zb}^k)= \phi (\zb^k-\bar{\zb}^k)$, we can rewrite~\eqref{eq:12_a} as
  \begin{equation}
    \label{eq:13_a}
    \langle \phi (\zb^k-\bar{\zb}^k)+ \alpha F(\zb^{k-1}), \zb^{k+1}-\zb^k \rangle \geq 0.
  \end{equation}
  Summing up~\eqref{eq:11_a} and~\eqref{eq:13_a} yields
  \begin{multline}
    \label{eq:14_a}
    \langle \zb^{k+1}-\bar{\zb}^k, \zb - \zb^{k+1} \rangle + \phi \langle \zb^k-\bar{\zb}^k, \zb^{k+1}-\zb^k \rangle + \alpha \langle F(\zb^k), \zb -\zb^k \rangle \\
    + \alpha \langle F(\zb^k)-F(\zb^{k-1}), \zb^k-\zb^{k+1} \rangle \geq 0.
  \end{multline}
Expressing the first two terms in~\eqref{eq:14_a} by norms and using the definition of $G(\zb^k, \zb)$ from~\eqref{eq: we3}, we deduce
  \begin{multline}
    \label{eq:15_a}
G(\zb^k, \zb)+    \Vert \zb^{k+1}-\zb \Vert^2 \le \Vert \bar{\zb}^k-\zb \Vert^2 - \Vert \zb^{k+1}-\bar{\zb}^k \Vert^2 + 2 \alpha \langle F(\zb^k)-F(\zb^{k-1}), \zb^k-\zb^{k+1} \rangle \\
    + \phi (\Vert \zb^{k+1}-\bar{\zb}^k \Vert^2 - \Vert \zb^{k+1}-\zb^k \Vert^2 - \Vert \zb^k-\bar{\zb}^k \Vert^2).
  \end{multline}
The definition of $\bar{\zb}^k$ gives
  \begin{align}
    \Vert \zb^{k+1}-\zb \Vert^2 &= \frac{\phi}{\phi-1} \Vert \bar{\zb}^{k+1}-\zb \Vert^2 - \frac{1}{\phi-1} \Vert \bar{\zb}^k-\zb \Vert^2 + \frac{\phi}{(\phi-1)^2} \Vert \bzb^{k+1}-\bar{\zb}^k \Vert^2 \notag \\
    &= \frac{\phi}{\phi-1} \Vert \bar{\zb}^{k+1}-\zb \Vert^2 - \frac{1}{\phi-1} \Vert \bar{\zb}^k-\zb \Vert^2 + \frac{1}{\phi} \Vert \zb^{k+1}-\bar{\zb}^k \Vert^2,    \label{eq:17_a}
  \end{align}
  where the second equality used $\bzb^{k+1} - \bzb^k = \frac{\phi-1}{\phi}(\zb^{k+1} - \bzb^k)$.
  
  Additionally, from $\alpha = \frac{\phi-2\varepsilon}{2L}$ and Young's inequality, we have
  \begin{equation}
    \label{eq:19_a}
    \begin{aligned}
      2 \alpha \langle F(\zb^k)-F(\zb^{k-1}), \zb^k-\zb^{k+1} \rangle &\le (\phi-2\varepsilon) \Vert F(\zb^k)-F(\zb^{k-1}) \Vert \Vert \zb^{k+1}-\zb^k \Vert \\
      &\le \frac{\phi}{2} \Vert \zb^{k+1}-\zb^k \Vert^2 + \frac{(\phi-2\varepsilon)^2}{2\phi} \Vert \zb^k-\zb^{k-1} \Vert^2.
    \end{aligned}
  \end{equation}
  Using~\eqref{eq:17_a},~\eqref{eq:19_a}, and $\phi^2\|\zb^k - \bzb^{k} \|^2 = \| \zb^k - \bzb^{k-1}\|^2$ on~\eqref{eq:15_a} gives the desired conclusion.
\end{proof}

\begin{lemma}[Upper bound of $R^2$ used in Theorem~\ref{thm:bounded}]\label{lem: rto4}
Let Assumption~\ref{as: as1} hold. Then for the first iteration of Alg.~\ref{alg:graal}, we have that
\begin{equation*}
R^2 = \| \zb^1 - \zb^* \|^2 + \| \zb^0 - \zb^* \|^2 \leq 3 \|\zb^0 - \zb^*\|^2.
\end{equation*}
\end{lemma}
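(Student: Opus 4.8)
The plan is to sidestep the usual prox-inequality/monotonicity bookkeeping and instead observe that both $\zb^1$ and the solution $\zb^*$ are images of the \emph{same} forward--backward map, so that nonexpansiveness of $P_C$ does essentially all the work. First I would record that, because Algorithm~\ref{alg:graal} initializes $\bzb^{-1}=\zb^0$, the first averaging step yields $\bzb^0 = \frac{\phi-1}{\phi}\zb^0 + \frac1\phi \bzb^{-1} = \zb^0$, independently of $\phi$, so that $\zb^1 = P_C(\bzb^0 - \alpha F(\zb^0)) = P_C(\zb^0 - \alpha F(\zb^0))$.

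The key observation is that every solution $\zb^*$ of~\eqref{eq: vi_prob} is a fixed point of this map: by the variational characterization of the projection, $P_C(\zb^* - \alpha F(\zb^*)) = \zb^*$ is equivalent to $\lr{F(\zb^*), \zb - \zb^*} \ge 0$ for all $\zb\in C$, which is precisely~\eqref{eq: vi_prob}. Combining this with nonexpansiveness of $P_C$ and expanding the square gives
\begin{align*}
\n{\zb^1 - \zb^*}^2 &\le \n{(\zb^0-\zb^*) - \alpha\big(F(\zb^0)-F(\zb^*)\big)}^2 \\
&= \n{\zb^0-\zb^*}^2 - 2\alpha\lr{F(\zb^0)-F(\zb^*),\,\zb^0-\zb^*} + \alpha^2\n{F(\zb^0)-F(\zb^*)}^2 .
\end{align*}
Here monotonicity of $F$ (Assumption~\ref{as: as1}(i)) makes the middle term nonpositive, while $L$-Lipschitzness together with $\alpha\le\frac1L$ (the step size regime of Theorem~\ref{thm:bounded}) bounds the last term by $\n{\zb^0-\zb^*}^2$. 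Hence $\n{\zb^1-\zb^*}^2 \le 2\n{\zb^0-\zb^*}^2$, and adding $\n{\zb^0-\zb^*}^2$ to both sides yields $R^2 = \n{\zb^1-\zb^*}^2 + \n{\zb^0-\zb^*}^2 \le 3\n{\zb^0-\zb^*}^2$.

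There is no real obstacle in this lemma; the only mildly nonobvious point is recognizing the fixed-point characterization of $\zb^*$, which collapses the estimate to a single line. One could instead argue directly from the prox-inequality at $\zb=\zb^*$, the VI, and monotonicity, but that route is slightly more delicate because the term $F(\zb^*)$ need not vanish and must be handled through the VI rather than simply cancelled.
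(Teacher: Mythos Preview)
Your proof is correct and follows essentially the same approach as the paper: both recognize $\zb^*=P_C(\zb^*-\alpha F(\zb^*))$, invoke (firm) nonexpansiveness of $P_C$, and then use monotonicity together with $\alpha\le 1/L$ and Lipschitzness to arrive at $\n{\zb^1-\zb^*}^2\le 2\n{\zb^0-\zb^*}^2$. The only cosmetic difference is that the paper uses firm nonexpansiveness, which produces an extra $-\n{\zb^1-\zb^0}^2$ term that is then cancelled by a Young-type bound on the cross term $2\alpha\lr{F(\zb^0)-F(\zb^*),\zb^0-\zb^1}$, whereas you use plain nonexpansiveness and bound $\alpha^2\n{F(\zb^0)-F(\zb^*)}^2$ directly; your route is slightly shorter.
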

\begin{proof}
Recall that  $\zb^{1} = P_C(\zb^0 - \alpha F(\zb^0))$ and $\zb^* = P_C(\zb^* - \alpha F(\zb^*))$. The former is because $\bzb^0 = \zb^0$ by the initialization in Alg.~\ref{alg:graal}. The latter follows by the definition of $\zb^*$ as a solution of~\eqref{eq: vi_prob}. By firm-nonexpansiveness of $P_C$, we have
\begin{align*}
  \|\zb^1 - \zb^*\|^2 &\leq \|(\zb^0-\alpha F(\zb^0)) - (\zb^* - \alpha F(\zb^*)) \|^2 - \|\zb^0-\alpha F(\zb^0) - \zb^1 + \alpha F(\zb^*) \|^2.
\end{align*}
Expanding the terms in the right-hand side we have that
\begin{multline}\label{eq: clg3}
\|\zb^1 - \zb^*\|^2 \leq \| \zb^0-\zb^*\|^2 - \| \zb^1-\zb^0\|^2 + 2\alpha \langle F(\zb^0) - F(\zb^*), \zb^0 - \zb^1 \rangle \\
-2\alpha \langle F(\zb^0)-F(\zb^*), \zb^0-\zb^*\rangle.
\end{multline}
By monotonicity, we have $\langle F(\zb^0)-F(\zb^*), \zb^0-\zb^*\rangle \geq 0$. By using $\alpha \leq \frac{1}{L}$, we have
\begin{equation*}
2\alpha \langle F(\zb^0) - F(\zb^*), \zb^0 - \zb^1 \rangle \leq 2\alpha L\|\zb^0 - \zb^* \| \|\zb^0 - \zb^1\| \leq \|\zb^0 - \zb^* \|^2+ \|\zb^0 - \zb^1\|^2.
\end{equation*}
Combining the last two estimates for the inner products in~\eqref{eq: clg3}, we have
\begin{align*}
\|\zb^1 - \zb^*\|^2 \leq 2\| \zb^0-\zb^*\|^2,
\end{align*}
which gives the result.
\end{proof}

\subsection{Alternative proof of Theorem~\ref{thm:bounded} via SDP} The proof of boundedness of $(\zb^k)$ presented in the main part was not very standard. In this section, we  provide an alternative proof of that fact which is based on a semidefinite program combined with induction. This approach is easily generalizable to the case of $\phi < 2$ even though we give it for $\phi=2$ for simplicity.

As before, we assume that $\n{\bzb^i-\zb^*}^2\leq 2R^2$ for all $i=1,\dots,k$ and we must show that $\n{\bzb^{k+1}-\zb^*}^2\leq 2R^2$. Our main tool will be inequality \eqref{beauty_main} which holds for all $k\geq 1$ and which we recall here 
\begin{align}
2\n{\bzb^{k+1}-\zb^*}^2 + \n{\zb^{k+1}-\zb^k}^2
&\leq  \| \zb^1 - \zb^*\|^2+\| \zb^0 - \zb^*\|^2  +\frac 1 2\n{\zb^{k+1}-\bzb^k}^2\label{beauty_main_app}
\end{align}
Without loss of generality, we assume that $\zb^* = 0$ and that $R^2 = \| \zb^1 - \zb^*\|^2+\| \zb^0 - \zb^*\|^2 \leq 1$. The first assumption is valid because we can always redefine sequences  as $\zb^k:=\zb^k-\zb^*$ and  $\bzb^k:=\bzb^k-\zb^*$, while the second~--- because we can rescale the norm $\|\cdot\|$ by any positive number.

Using these two assumptions and that $\zb^{k} = 2\bzb^k-\bzb^{k-1}$ we can rewrite \eqref{beauty_main_app} as
\begin{align*}
2\n{\bzb^{k+1}}^2 + \n{2\bzb^{k+1}-\bzb^k -2\bzb^k+\bzb^{k-1}}^2
  &\leq  1  +\frac 1 2\n{2\bzb^{k+1}-\bzb^k-\bzb^k}^2,
\end{align*}
or 
\begin{align}
 \label{beauty_main_app2}
2\n{\bzb^{k+1}}^2 + \n{2\bzb^{k+1}-3\bzb^k+\bzb^{k-1}}^2  &\leq  1  +2\n{\bzb^{k+1}-\bzb^k}^2.
\end{align}
The latter inequality is quadratic in $\bzb^{k+1},\bzb^k, \bzb^{k-1}$. Hence, after expanding the norms, we can rewrite it in a matrix notation as
\begin{align*}
  \tr(\Gb\cdot \Mb)\leq 1,
\end{align*}
where $\Gb$ is a Gram matrix 
\[\Gb =
\begin{bmatrix}
\lr{\bzb^{k+1}, \bzb^{k+1}} & \lr{\bzb^{k+1}, \bzb^{k}} & \lr{\bzb^{k+1}, \bzb^{k-1}} \\
\lr{\bzb^{k}, \bzb^{k+1}} & \lr{\bzb^{k}, \bzb^{k}} & \lr{\bzb^{k}, \bzb^{k-1}} \\
\lr{\bzb^{k-1}, \bzb^{k+1}} & \lr{\bzb^{k-1}, \bzb^{k}} & \lr{\bzb^{k-1}, \bzb^{k-1}} 
\end{bmatrix}
\qquad \text{and}\qquad
\Mb = 
\begin{bmatrix}
4 & -4 & 2\\
-4 & 7 & -3\\
2 & -3 & 1 
\end{bmatrix}. 
\]
By induction assumption we have that $\Gb_{22}\leq 2$ and $\Gb_{33}\leq 2$ and we must show that $\Gb_{11}\leq 2$. Consider the following semidefinite program
\begin{align*}
  \max_{\Gb}\, \Gb_{11} \quad &\text{subject to}\\
  \Gb &\succcurlyeq 0\\
  \tr(\Gb\cdot \Mb)&\leq 1\\
  \Gb_{22}&\leq 2\\
  \Gb_{33}&\leq 2
\end{align*}
If we can show that its optimal value is less than $2$, then we are done: it will automatically imply that $\n{\bzb^{k+1}}^2\leq 2$. Now, by solving it, we obtain $\Gb_{11}\approx 1.49259 $.

Therefore, we have proved that for all $k$, $\n{\bzb^k-\zb^*}^2 \leq 2R^2$.

\begin{remark}
  The semidefinite program actually allows us to show a slightly tighter bound: $\n{\bzb^{k}-\zb^*}^2 \leq 1.2 R^2$, but we kept the constant $2$ for consistency with the previous approach.
\end{remark}

\section{Details for Section~\ref{sub:adap-graal-no-hyper}}

\subsection{Implementation of the linesearch}%
\label{sub:}

In~\eqref{eq:lns} we outline a very particular linesearch which decreases $\alpha_0$ by $\gamma$ in every step. Since the canonical choice of $\phi$ proposed in~\citep{malitsky2020golden} yields the small $\gamma=1.1$ this could result in many backtracking iterations if our initial guess is bad.
For practical implementation, therefore, it is better to use first a coarse
reduction of $\alpha_0$, say with a factor of $10$ and only at the end to switch to a
fine factor $\gamma$.

\section{Weak Minty proofs}%
\label{sec:append-minty}

\subsection{Constant step size}%
\label{sub:}

By following the line of reasoning as in the monotone case one can only derive $\rho<\frac{1}{2L}$. We do not present the proof here but it is only a simple modification similar to the one we show later for the adaptive step size. In order to derive the $\rho< \frac1L$ bound (matching EG) one has to take a completely different approach. The high-level reason is that the monotone analysis requires $\alpha \le \frac{\phi}{2L}$. This is counter intuitive as a smaller $\phi$ makes~\eqref{eq:graal} more conservative since the iterates are more anchored to $\bar{\zb}^k$, and at the same time requires a smaller step size. A more conservative averaging of the iterates should allow for a more aggressive step size, which is precisely the behavior we observe here.
For convenience let $\gb^k = F(\zb^k)$.

\begin{lemma}%
  \label{lem:}
  Let $F$ be $L$-Lipschitz and fulfill Assumption~\eqref{eq:weak-minty}. Let $(\zb^k)$ and $(\bar{\zb}^k)$ be the iterates generated~\eqref{eq:graal}. Then
  \begin{multline}
    \label{eq:descent}
    \Vert \bar{\zb}^{k+1}-\zb^* \Vert^2 \le \Vert \bar{\zb}^k-\zb^* \Vert^2 + \frac{\phi-1}{\phi}\alpha \rho \Vert \gb^k \Vert^2  +  \frac{4}{\phi (\phi-1)}\alpha \langle \gb^k - \gb^{k-1}, \zb^k-\zb^{k+1} \rangle\\
  - \frac{3-\phi}{\phi-1} \Vert \zb^{k+1}-\zb^k \Vert^2 - \frac{\alpha^2(\phi+1)}{\phi^2(\phi-1)} \Vert \gb^k - \gb^{k-1} \Vert^2.
  \end{multline}
\end{lemma}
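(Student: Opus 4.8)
The plan is to run the standard one-step energy computation for \eqref{eq:graal}, tracking the terms involving $\rho$ and $F(\zb^k)$ carefully instead of discarding them with monotonicity. First I would write the two prox-inequalities, exactly as in the proof of Lemma~\ref{lem: one_it}: from the definition of $\zb^{k+1}$ we get $\langle \zb^{k+1} - \bzb^k + \alpha \gb^k, \zb - \zb^{k+1}\rangle \ge 0$ for all $\zb \in C$, and from the definition of $\zb^k$ together with the identity $\zb^k - \bzb^{k-1} = \phi(\zb^k - \bzb^k)$ we get $\langle \phi(\zb^k - \bzb^k) + \alpha \gb^{k-1}, \zb^{k+1}-\zb^k\rangle \ge 0$. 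The crucial difference from the monotone case is the choice $\zb = \zb^*$ in the first inequality: instead of bounding $\langle \gb^k, \zb^* - \zb^k\rangle$ below by $0$ via monotonicity, I would use the weak Minty inequality \eqref{eq:weak-minty}, which gives $\langle \gb^k, \zb^{k+1} - \zb^*\rangle \le \frac{\rho}{2}\Vert \gb^k\Vert^2 + \langle \gb^k, \zb^{k+1} - \zb^k\rangle$. This is where the $\rho\Vert \gb^k\Vert^2$ term in \eqref{eq:descent} originates.

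Next I would convert the inner products coming from the prox-inequalities into squared norms via the polarization identity $2\langle a,b\rangle = \Vert a\Vert^2 + \Vert b\Vert^2 - \Vert a-b\Vert^2$, producing terms $\Vert \bzb^k - \zb^*\Vert^2$, $\Vert \zb^{k+1}-\bzb^k\Vert^2$, $\Vert \zb^{k+1}-\zb^k\Vert^2$, $\Vert \zb^k - \bzb^k\Vert^2$ and the cross term $\alpha\langle \gb^k - \gb^{k-1}, \zb^k - \zb^{k+1}\rangle$, which is left intact (it appears in the statement and will be handled later in the global telescoping argument). Then, exactly as in \eqref{eq:17_a}, I would use the definition $\bzb^{k+1} = \frac{\phi-1}{\phi}\zb^{k+1} + \frac{1}{\phi}\bzb^k$ — equivalently $\bzb^{k+1} - \bzb^k = \frac{\phi-1}{\phi}(\zb^{k+1}-\bzb^k)$ — to re-express everything in terms of $\Vert \bzb^{k+1} - \zb^*\Vert^2$ and $\Vert \bzb^k - \zb^*\Vert^2$, which turns $\Vert \zb^{k+1}-\zb^*\Vert^2$ into $\frac{\phi}{\phi-1}\Vert \bzb^{k+1}-\zb^*\Vert^2 - \frac{1}{\phi-1}\Vert \bzb^k - \zb^*\Vert^2 + \frac{1}{\phi}\Vert \zb^{k+1}-\bzb^k\Vert^2$. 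Finally I would use the identity $\phi^2 \Vert \zb^k - \bzb^k\Vert^2 = \Vert \zb^k - \bzb^{k-1}\Vert^2$ and also re-expand $\Vert \zb^{k+1}-\bzb^k\Vert^2$ in terms of $\Vert \gb^k - \gb^{k-1}\Vert^2$ using $\zb^{k+1}-\bzb^k = -\alpha \gb^k$ wherever the constant-step structure permits (note $\bzb^k - \zb^{k+1} = \alpha\gb^k$, so $\Vert \zb^{k+1}-\bzb^k\Vert = \alpha\Vert \gb^k\Vert$, and combining with the cross term one extracts the $\Vert \gb^k - \gb^{k-1}\Vert^2$ contribution). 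Multiplying through by $\frac{\phi-1}{\phi}$ to normalize the leading coefficient of $\Vert \bzb^{k+1}-\zb^*\Vert^2$ to one, and collecting coefficients, should produce precisely \eqref{eq:descent}, with the $-\frac{3-\phi}{\phi-1}$ coefficient on $\Vert \zb^{k+1}-\zb^k\Vert^2$ and $-\frac{\alpha^2(\phi+1)}{\phi^2(\phi-1)}$ on $\Vert \gb^k - \gb^{k-1}\Vert^2$ emerging from the bookkeeping.

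The main obstacle I expect is the coefficient bookkeeping: several different representations of the same quantity ($\Vert \zb^{k+1}-\bzb^k\Vert^2$ as $\alpha^2\Vert\gb^k\Vert^2$, the $\frac{1}{\phi}$-weighted copy coming from \eqref{eq:17_a}, the $\phi$-weighted copy from the second prox inequality) need to be combined in just the right proportion so that the residual negative terms come out with the stated signs and the $\Vert\gb^k\Vert^2$ terms that are \emph{not} multiplied by $\rho$ cancel entirely — since \eqref{eq:descent} has no bare $\Vert\gb^k\Vert^2$ term, only $\rho\Vert\gb^k\Vert^2$. Getting that exact cancellation is the delicate point; it is essentially an identity-checking exercise once the right substitutions ($\bzb^k - \zb^{k+1} = \alpha\gb^k$, $\bzb^{k+1}-\bzb^k = \frac{\phi-1}{\phi}(\zb^{k+1}-\bzb^k)$, $\phi^2\Vert\zb^k - \bzb^k\Vert^2 = \Vert \zb^k - \bzb^{k-1}\Vert^2$) are in place, and can be double-checked by specializing to $\phi = 2$ against \eqref{eq: fe2} adapted to the weak Minty setting. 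I do not anticipate any conceptual difficulty beyond this, since no summation or Lyapunov argument is needed for the lemma itself — the telescoping and the extraction of $\min_i \Vert F(\zb^i)\Vert^2$ is deferred to the proof of Theorem~\ref{thm:graal-weak-minty}.
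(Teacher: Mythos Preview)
Your route diverges from the paper's in a way that matters. The paper does \emph{not} use the prox-inequality machinery of Lemma~\ref{lem: one_it} here. Instead it works entirely in the unconstrained setting (note the weak Minty section is unconstrained), expands $\Vert\bzb^{k+1}-\zb^*\Vert^2 = \Vert\bzb^k - \tfrac{\phi-1}{\phi}\alpha\gb^k - \zb^*\Vert^2$ directly, applies~\eqref{eq:weak-minty} together with $\bzb^k-\zb^k=\tfrac{\alpha}{\phi}\gb^{k-1}$, and arrives at
\[
\Vert\bzb^{k+1}-\zb^*\Vert^2 \le \Vert\bzb^k-\zb^*\Vert^2 + \tfrac{\phi-1}{\phi}\alpha\rho\Vert\gb^k\Vert^2 - \tfrac{\phi-1}{\phi}\alpha^2\bigl\langle\gb^k,\tfrac{2}{\phi}\gb^{k-1}-\tfrac{\phi-1}{\phi}\gb^k\bigr\rangle.
\]
The substance of the lemma is then the \emph{identity}
\[
-\tfrac{\phi-1}{\phi}\alpha^2\bigl\langle\gb^k,\tfrac{2}{\phi}\gb^{k-1}-\tfrac{\phi-1}{\phi}\gb^k\bigr\rangle
= \tfrac{4}{\phi(\phi-1)}\alpha\langle\gb^k-\gb^{k-1},\zb^k-\zb^{k+1}\rangle - \tfrac{3-\phi}{\phi-1}\Vert\zb^{k+1}-\zb^k\Vert^2 - \tfrac{\alpha^2(\phi+1)}{\phi^2(\phi-1)}\Vert\gb^k-\gb^{k-1}\Vert^2,
\]
which is verified by writing all iterate differences in terms of $\gb^k,\gb^{k-1}$ via $\zb^{k+1}-\zb^k=\tfrac{\alpha}{\phi}(\gb^{k-1}-\gb^k)-\alpha\tfrac{\phi-1}{\phi}\gb^k$.

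If you follow the monotone proof and then multiply through by $\tfrac{\phi-1}{\phi}$, you get the inner-product term with coefficient $\tfrac{2(\phi-1)}{\phi}$ and $\Vert\zb^{k+1}-\zb^k\Vert^2$ with coefficient $-(\phi-1)$, not $\tfrac{4}{\phi(\phi-1)}$ and $-\tfrac{3-\phi}{\phi-1}$. These do not agree for general $\phi$, so ``collecting coefficients'' does not produce~\eqref{eq:descent}; you would still need the identity above (or an equivalent rearrangement) on top of what you wrote. The paper in fact warns about this: at the start of Appendix~\ref{sec:append-minty} it says that following the monotone-case reasoning only yields $\rho<\tfrac{1}{2L}$ and that obtaining the stronger bound requires ``a completely different approach'' --- precisely this direct expansion plus algebraic identity. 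So the missing piece in your plan is not mere bookkeeping but the specific rewriting that produces the stated coefficients; without it, your inequality is correct but is not~\eqref{eq:descent}.
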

\begin{proof}
  First we observe simply from the definition of the iterates
  \begin{equation}
    \label{eq:firsty}
    \begin{aligned}
      \Vert \bar{\zb}^{k+1}-\zb^* \Vert^2 &= \left\Vert \frac{\phi-1}{\phi} \zb^{k+1} + \frac1\phi \bar{\zb}^k - \zb^* \right\Vert^2 \\
                              &= \left\Vert \frac{\phi-1}{\phi}(\bar{\zb}^k-\alpha \gb^k) + \frac1\phi \bar{\zb}^k-\zb^* \right\Vert^2 \\
                              &= \left\Vert \bar{\zb}^k - \frac{\phi-1}{\phi}\alpha \gb^k-\zb^* \right\Vert^2 \\
                              &= \Vert \bar{\zb}^k -\zb^* \Vert^2 -2 \frac{\phi-1}{\phi} \alpha \langle \gb^k, \bar{\zb}^k-\zb^* \rangle + \left\Vert \frac{\phi-1}{\phi}\alpha \gb^k \right\Vert^2 \\
                              &\le \Vert \bar{\zb}^k - \zb^* \Vert^2 + \frac{\phi-1}{\phi}\alpha \rho \Vert \gb^k \Vert^2 - 2 \frac{\phi-1}{\phi}\alpha \langle \gb^k, \bar{\zb}^k-\zb^k \rangle + \left\Vert \frac{\phi-1}{\phi}\alpha \gb^k \right\Vert^2 \\
                              &= \Vert \bar{\zb}^k - \zb^* \Vert^2 + \frac{\phi-1}{\phi}\alpha \rho \Vert \gb^k \Vert^2 - 2 \frac{\phi-1}{\phi}\alpha \langle \gb^k, \frac1\phi \alpha \gb^{k-1} \rangle + \left\Vert \frac{\phi-1}{\phi}\alpha \gb^k \right\Vert^2\\
                              &= \Vert \bar{\zb}^k - \zb^* \Vert^2 + \frac{\phi-1}{\phi}\alpha \rho \Vert \gb^k \Vert^2 - \frac{\phi-1}{\phi}\alpha^2 \left\langle \gb^k, \frac{2}{\phi}\gb^{k-1} - \frac{\phi-1}{\phi}\gb^k \right\rangle,
    \end{aligned}
  \end{equation}
where we used~\eqref{eq:weak-minty} to deduce the inequality.
Now we want to prove the following equality
\begin{multline}
  \label{eq:crazy}
  - \frac{\phi-1}{\phi}\alpha^2 \left\langle \gb^k, \frac{2}{\phi}\gb^{k-1} - \frac{\phi-1}{\phi}\gb^k \right\rangle = \frac{4}{\phi (\phi-1)}\alpha \langle \gb^k - \gb^{k-1}, \zb^k-\zb^{k+1} \rangle\\
  - \frac{3-\phi}{\phi-1} \Vert \zb^{k+1}-\zb^k \Vert^2 - \frac{\alpha^2(\phi+1)}{\phi^2(\phi-1)} \Vert \gb^k - \gb^{k-1} \Vert^2.
\end{multline}
We can verify this by expanding all iterates in terms of operators. Observe first that
\begin{equation}
  \label{eq:iterates-as-grad}
  \zb^{k+1}-\zb^k = \bar{\zb}^k - \alpha \gb^k - \zb^k = \frac1\phi (\bar{\zb}^{k-1}-\zb^k)- \alpha \gb^k =  \frac1\phi \alpha \gb^{k-1} - \alpha \gb^k = \alpha \frac1\phi (\gb^{k-1} - \gb^k) - \alpha\frac{\phi-1}{\phi}\gb^k.
\end{equation}
Now we use this to deduce
\begin{align*}
  - \Vert \zb^{k+1}-\zb^k \Vert^2 &= - \alpha^2 \left\Vert \frac1\phi (\gb^{k-1} - \gb^k) - \frac{\phi-1}{\phi}\gb^k \right\Vert^2 \\
  &= - \frac{\alpha^2}{\phi^2} \Vert \gb^{k-1} - \gb^k \Vert^2 + \frac{\phi-1}{\phi^2}\alpha^2 \langle \gb^k, \gb^{k-1}-\gb^k \rangle - \alpha^2 {\left(\frac{\phi-1}{\phi}\right)}^2 \Vert \gb^k \Vert^2 \\
  &= - \frac{\alpha^2}{\phi^2} \Vert \gb^{k-1} - \gb^k \Vert^2 + \frac{\phi-1}{\phi^2} \alpha^2 \langle \gb^k, 2 \gb^{k-1} - 2 \gb^k - (\phi-1) \gb^k \rangle \\
  &= - \frac{\alpha^2}{\phi^2} \Vert \gb^k - \gb^{k-1} \Vert^2 + \frac{\phi-1}{\phi^2}\alpha^2 \left\langle \gb^k, 2\gb^{k-1}- (\phi+1)\gb^k \right\rangle.
\end{align*}
Therefore, by multiplying both sides by $\frac{3-\phi}{\phi-1}$ we get
\begin{equation}
  \label{eq:diff-iterates}
  -\frac{3-\phi}{\phi-1} \Vert \zb^{k+1}-\zb^k \Vert^2 = - \frac{3-\phi}{\phi-1} \frac{\alpha^2}{\phi^2} \Vert \gb^k - \gb^{k-1} \Vert^2 + \frac{3-\phi}{\phi^2} \alpha^2 \left\langle \gb^k, 2\gb^{k-1}- (\phi+1)\gb^k \right\rangle.
\end{equation}
Again, by going to $\gb^k$ everywhere we have
\begin{align*}
  \frac{\alpha}{\phi} \langle \gb^k - \gb^{k-1}, \zb^k - \zb^{k+1} \rangle &= \frac{\alpha^2}{\phi} \langle \gb^k - \gb^{k-1}, \gb^k - \frac1\phi \gb^{k-1} \rangle \\
  &= \frac{\alpha^2}{\phi} \langle \gb^k - \gb^{k-1}, \frac1\phi (\gb^k - \gb^{k-1}) + (1-\frac1\phi)\gb^k \rangle \\
  &= \frac{\alpha^2}{\phi^2} \Vert \gb^{k-1}-\gb^k \Vert^2 + \frac{\phi-1}{\phi^2} \alpha^2 \langle \gb^k , \gb^k - \gb^{k-1} \rangle,
\end{align*}
and therefore by multiplying both sides by $\frac{4}{\phi-1}$
\begin{align}
  \label{eq:inner-product}
  \frac{4}{\phi-1}\frac{\alpha}{\phi} \langle \gb^k - \gb^{k-1}, \zb^k - \zb^{k+1} \rangle = \frac{4}{\phi-1}\frac{\alpha^2}{\phi^2} \Vert \gb^{k-1}-\gb^k \Vert^2 + \frac{4}{\phi^2} \alpha^2 \langle \gb^k , \gb^k - \gb^{k-1} \rangle.
\end{align}
By combining~\eqref{eq:inner-product} and~\eqref{eq:diff-iterates} we deduce
\begin{multline*}
  - \frac{\phi-1}{\phi}\alpha^2 \left\langle \gb^k, \frac{2}{\phi}\gb^{k-1} - \frac{\phi-1}{\phi}\gb^k \right\rangle = \frac{4}{\phi(\phi-1)}\alpha \langle \gb^k - \gb^{k-1}, \zb^k-\zb^{k+1} \rangle\\
  - \frac{3-\phi}{\phi-1} \Vert \zb^{k+1}-\zb^k \Vert^2 + \left( \frac{3-\phi}{\phi-1} \frac{\alpha^2}{\phi^2} - \frac{4}{\phi-1}\frac{\alpha^2}{\phi^2} \right) \Vert \gb^k - \gb^{k-1} \Vert^2.
\end{multline*}
We can deduce~\eqref{eq:crazy} by simplifying the above equation. The statement of the lemma is obtained by combining~\eqref{eq:crazy} and~\eqref{eq:firsty}.
\end{proof}

\begin{proof}[of Theorem~\ref{thm:graal-weak-minty}]
  We deduce from~\eqref{eq:iterates-as-grad}
  \begin{equation}
    \label{eq:grad}
    \frac{\phi-1}{\phi^2}\alpha^2 \Vert \gb_k \Vert^2 = \frac{\alpha^2}{\phi^2(\phi-1)} \Vert \gb^k - \gb^{k-1} \Vert^2 + \frac{2\alpha}{\phi(\phi-1)} \langle \gb^{k-1}-\gb^k, \zb^k-\zb^{k+1} \rangle + \frac{1}{\phi-1} \Vert \zb^k-\zb^{k+1} \Vert^2.
  \end{equation}
  Adding~\eqref{eq:grad} to~\eqref{eq:descent} gives
  \begin{multline}
    \label{eq:descent-minty-graal}
    \Vert \bar{\zb}^{k+1}-\zb^* \Vert^2 + \frac{\phi-1}{\phi}\alpha(\frac\alpha\phi-\rho) \Vert \gb^k \Vert^2 \le \Vert \bar{\zb}^k-\zb^* \Vert^2   +  \frac{2}{\phi (\phi-1)}\alpha \langle \gb^k - \gb^{k-1}, \zb^k-\zb^{k+1} \rangle\\
  - \frac{2-\phi}{\phi-1} \Vert \zb^{k+1}-\zb^k \Vert^2 - \frac{\alpha^2\phi}{\phi^2(\phi-1)} \Vert \gb^k - \gb^{k-1} \Vert^2
  \end{multline}
  By Young's inequality we deduce
  \begin{equation}
    \label{eq:young}
    \frac{2}{\phi (\phi-1)}\alpha \langle \gb^k - \gb^{k-1}, \zb^k-\zb^{k+1} \rangle \le \frac{\alpha}{\phi(\phi-1)L} \Vert \gb^k-\gb^{k-1} \Vert^2 + \frac{\alpha L}{\phi(\phi-1)} \Vert \zb^{k+1}-\zb^k \Vert^2.
  \end{equation}
  Combining~\eqref{eq:young} and~\eqref{eq:descent-minty-graal} yields
  \begin{multline*}
    \Vert \bar{\zb}^{k+1}-\zb^* \Vert^2 + \frac{\phi-1}{\phi}\alpha\left(\frac\alpha\phi-\rho\right) \Vert \gb^k \Vert^2 \le \Vert \bar{\zb}^k-\zb^* \Vert^2 \\
  - \left(\frac{2-\phi}{\phi-1} - \frac{\alpha L}{\phi(\phi-1)}\right) \Vert \zb^{k+1}-\zb^k \Vert^2 + \left(\frac{\alpha}{L \phi(\phi-1)}  - \frac{\alpha^2\phi}{\phi^2(\phi-1)}  \right) \Vert \gb^k - \gb^{k-1} \Vert^2.
  \end{multline*}
  Therefore, in order to telescope we require
  \begin{equation}
    \label{eq:condition-graal-minty}
    2\phi - \phi^2 - \alpha L \ge \alpha L - \alpha^2 L^2
  \end{equation}
  and $2\phi - \phi^2 \ge \alpha L$ for the last term to be nonnegative. The condition~\eqref{eq:condition-graal-minty} can be simplified to
  \begin{equation*}
    \frac{2-\phi}{L} \ge \alpha,
  \end{equation*}
  and the nonnegativity condition becomes redundant. We observe that, if $\rho < \frac{1}{L}$, then we can pick $\rho$ close enough to one such that $\frac{2-\phi}{\phi L} > \rho$.
  The final bound we obtain by
  \begin{equation*}
    \left(\frac{\alpha}{L \phi(\phi-1)}  - \frac{\alpha^2\phi}{\phi^2(\phi-1)}  \right) \Vert \gb^k - \gb^{k-1} \Vert^2 \le \left(\frac{\alpha L}{\phi(\phi-1)}  - \frac{\alpha^2L^2}{\phi(\phi-1)}  \right) \Vert \zb^k - \zb^{k-1} \Vert^2
  \end{equation*}
  where
  \begin{equation}
    \frac{\alpha L}{\phi(\phi-1)}  - \frac{\alpha^2L^2}{\phi(\phi-1)}  = \frac{\phi-2 - {(\phi-2)^2}}{\phi(\phi-1)} = \frac{3\phi -\phi^2-2}{\phi(\phi-1)} \overset{\phi\ge 1}{\le} \frac{2\phi -2}{\phi(\phi-1)} = \frac{2}{\phi}.
  \end{equation}
\end{proof}

\subsection{Adaptive step size}%
\label{sub:}

The analysis of~\eqref{eq:agraal} relies on a similar energy function as before. So with slight abuse of notation we (re-)define for the purpose of this section
\begin{equation*}
 \L(\zb^{k+1}, \zb) :=\frac{\phi}{\phi-1}\Vert \bar{\zb}^{k+1}-\zb \Vert^2 + \frac{\theta_k}{2} \Vert \zb^{k+1}-\zb^k \Vert^2.
\end{equation*}

\begin{proof}[of Theorem~\ref{thm:agraal-weak-minty}]
  The first few steps of our analysis follow the one presented in~\citep{malitsky2020golden} so we do not reproduce them here. We continue from (34) in~\citep{malitsky2020golden}, which reads
  \begin{equation*}
    \L(\zb^{k+1}, \zb)  \le \L(\zb^k, \zb) + \Big( \theta_k - 1 - \frac{1}{\phi} \Big)\alpha_k^2 \Vert F(\zb^k) \Vert^2 - \theta_k \Vert \zb^k - \bar{\zb}^k \Vert^2 - \alpha_k \langle F(\zb^k), \zb^k -\zb \rangle.
  \end{equation*}
  By using the weak Minty property~\eqref{eq:weak-minty} this reduces to
  \begin{equation}
    \label{eq:descent-adaptive}
    \L(\zb^{k+1}, \zb^*)   \le \L(\zb^k, \zb^*) + \alpha_k \left( \Big( \theta_k - 1 - \frac{1}{\phi} \Big)\alpha_k + \rho\right)\Vert F(\zb^k) \Vert^2 - \theta_k \Vert \zb^k - \bar{\zb}^k \Vert^2.
  \end{equation}
  Note that
  \begin{equation}
    \label{eq:extra-term-adaptive}
    \theta_k \Vert \zb^k - \bar{\zb}^k  \Vert^2 = \frac{\theta_k}{\phi^2}  \Vert  \zb^k - \bar{\zb}^{k-1}   \Vert^2 = \frac{\theta_k \alpha_{k-1}^2}{\phi^2} \Vert F(\zb^{k-1}) \Vert^2 = \frac{\alpha_{k-1}\alpha_k}{\phi} \Vert F(\zb^{k-1}) \Vert^2.
  \end{equation}
  Plugging~\eqref{eq:extra-term-adaptive} into~\eqref{eq:descent-adaptive} yields
  \begin{equation}
    \label{eq:descent-adaptive-simplified}
    \L(\zb^{k+1}, \zb^*) + \alpha_k\left(\frac{\alpha_{k-1}}{\phi}\Vert F(\zb^{k-1}) \Vert^2 + \left(\alpha_k\Big( 1 + \frac{1}{\phi} - \theta_k  \Big)- \rho \right)\Vert F(\zb^k) \Vert^2\right) \le \L(\zb^k, \zb^*).
  \end{equation}
  If for all $k$
  \begin{equation*}
    \alpha_k\left(\alpha_k\Big( 1 + \frac{1}{\phi} - \theta_k  \Big)- \rho \right) > 0,
  \end{equation*}
  which can be ensured, by the fact that $\theta_k\le \incr\phi$, if
  \begin{equation}
    \label{eq:agraal-weak-minty-condition-proof}
    \alpha_k\Big( 1 + \frac{1}{\phi} - \incr \phi  \Big)- \rho > 0,
  \end{equation}
  holds, then after telescoping, where we unroll the recursion until $k=1$ as argued in the proof of Lemma~\ref{lem: one_it_adap}, to obtain
  \begin{equation*}
    \sum_{i=1}^{k}\alpha_i\left((1 + \frac{1}{\phi} - \incr \phi)\alpha_i - \rho \right) \Vert F(\zb^i) \Vert^2  + \L(\zb^{k+1}, \zb^*)  \le \L(\zb^1, \zb^*).
  \end{equation*}
\end{proof}

\begin{remark}
 Note that in order to guarantee just $\liminf_k \Vert F(\zb^k) \Vert=0$ it is sufficient to ask for
 \begin{equation*}
   \sum_{i=1}^{k}\alpha_i\left((1 + \frac{1}{\phi} - \incr \phi)\alpha_i - \rho \right) \to \infty,
 \end{equation*}
 meaning that we can allow for arbitrarily many step size to not fulfill the condition $\delta>0$, but for the sequence on average.
\end{remark}

In the presence of bounded iterates we were able to relax the conditions of Theorem~\ref{thm:agraal-weak-minty}. Let $M$ denote the diameter of the ball containing the iterates.
\begin{proof}[of Corollary~\ref{cor:agraal-weak-minty-bounded-iter}]
  After telescoping~\eqref{eq:descent-adaptive-simplified} we get
  \begin{equation}
    \label{eq:use-extra}
    \begin{aligned}
      \L(\zb^{k+1}, \zb^*)+ \sum_{i=1}^{k} \alpha_i\left(\frac{\alpha_{i+1}}{\phi} + \alpha_i\Big(1 + \frac{1}{\phi} - \theta_i \Big)- \rho \right)\Vert F(\zb^i) \Vert^2  &\le  \L(\zb^1, \zb^*) + \frac{\alpha_{k+1}\alpha_k}{\phi} \Vert F(\zb^k) \Vert^2.
    \end{aligned}
  \end{equation}
  Let us first observe that the nonnegativity of the factor in front of $\Vert F(\zb^i) \Vert^2$ can be guaranteed via
  \begin{equation*}
    \frac{\alpha_{k+1}}{\phi} + \alpha_k \Big(1+ \frac{1}{\phi} - \theta_k \Big) \ge \frac{\alpha_{k+1}}{\phi} + \alpha_k \Big(1+ \frac{1}{\phi} - \incr \phi \Big),
  \end{equation*}
  since $\theta_k \le \incr \phi$.
  Also, the last term on the right hand side of~\eqref{eq:use-extra} can be bounded via
  \begin{equation*}
    \alpha_{k+1}\alpha_k \Vert F(\zb^k) \Vert^2 \le \frac{\alpha_{k+1}}{\alpha_k} \Vert \zb^{k+1} -\bar{\zb}^k \Vert^2 \le \incr \Vert \zb^{k+1} -\bar{\zb}^k \Vert^2.
  \end{equation*}
  Thus we obtain
  \begin{equation*}
      \sum_{i=1}^{k} \alpha_i\left(\frac{\alpha_{i+1}}{\phi} + \alpha_i\Big(1 + \frac{1}{\phi} - \incr\phi \Big)- \rho \right)\Vert F(\zb_i) \Vert^2  \le  \L(\zb^1, \zb^*) + \frac{\incr}{\phi} \Vert \zb^{k+1}- \bar{\zb}^k \Vert^2,
  \end{equation*}
  where the last term on the right remains bounded due to the assumed boundedness of the iterates.
  To deduce the precise constant we observe
  \begin{equation*}
    \frac{\phi}{\phi-1}\Vert \bar{\zb}^1-\zb \Vert^2 + \frac{\theta_0}{2} \Vert \zb^1-\zb^0 \Vert^2 + \frac{\incr}{\phi} \Vert \zb^{k+1}- \bar{\zb}^k \Vert^2 \le 2 M^2 + 2 M^2 + 2 M^2.
  \end{equation*}
  Thus,
  \begin{equation*}
    \min_{i=1,\dots,k} \Vert F(\zb_i) \Vert^2  \le  \frac{6}{\delta \sum_{i=1}^{k} \alpha_i} M^2.
  \end{equation*}
\end{proof}

\section{Details on experiments}%
\label{sec:gop3}

\subsection{Monotone problems}%

For the left plot in Fig.~\ref{fig:constant} we use the Lagrangian formulation of a linearly constrained quadratic program
\begin{equation*}
  L(x,y) = \frac12 x^T Hx - h^T x - \langle Ax-b, y \rangle,
\end{equation*}
where $x,y,h,b \in \R^d, A \in \R^{d\times d}$ with $d=100$.
We use the parametrization proposed in~\citep{lower-bounds-bilinear} and further studied in~\citep{acc-gradient-norm-minimax} which provides a particularly difficult instance.
The middle and right plot in Fig.~\ref{fig:constant} are special instances of bilinear matrix games of the form
\begin{equation*}
  \min_{x\in \Delta^d} \max_{y\in \Delta^d} \, x^T A y,
\end{equation*}
where $\Delta^d$ denotes the $d$-dimensional unit simplex $\{x \in \R^d : x\ge0,\, \sum_{i=1}^{d}x_i = 1\}$. For our experiments we used $d=50$.

\subsection{Weak minty experiments}%

\subsubsection{Algorithms}%

For~\eqref{eq:agraal} $\phi$ is usually given in the legend, except for Fig.~\ref{fig:spotlight} where we used the default $\phi=1.5$. If $\gamma$ is not given in the legend we use the theoretical upper bound $1/\phi + 1/\phi^2$.

For CurvatureEG$+$ we use a backtracking linesearch initialized with $\nu \Vert JF(\zb^k) \Vert^{-1}$, where we use $\nu=0.99$ and $JF$ denotes the Jacobian of $F$. We ignore the extra cost of this initialization but do count the extra gradient evaluations from the backtracking, where in every step the step size is decreased by $\tau=0.9$.

\subsubsection{Polar Game}%
\label{sub:polar-game}

The unique solution for this problem is in the origin. The Lipschitz constant $L$ and the weak Minty parameter $\rho$ we approximate numerically, via a grid search on the interval $[-1.1, 1.1] \times [-1.1, 1.1]$.
In Fig.~\ref{fig:polar-game} the value $a=1/3$ is used whereas in Fig.~\ref{fig:polar-game-adaptive} is given by $a=3.7$. For $a=1/3$ we get $L \approx 6.94$ and $\rho \approx 0.09$, whereas for $a=3$ we compute $L\approx 61.4$ and $\rho \approx 0.72$. In the latter case we observe from the result of the linesearch, see Fig.~\ref{fig:polar-game-adaptive}, that these global estimates are quite pessimistic but even locally the necessary condition $\rho < \alpha_k$ is not satisfied for CurvatureEG$+$, which is why we observe its divergence.

\vskip 0.2in
\bibliography{graal_refs}

\end{document}